\documentclass[11pt]{amsart}
\usepackage{latexsym,amscd,amssymb, graphicx, color, amsthm, bm, amsmath, cancel, enumitem}  
\usepackage{tikz}
\usepackage{tikz-3dplot}
\usepackage[margin=1in]{geometry}
\usepackage{hyperref}
\usepackage[all,cmtip]{xy}

\numberwithin{equation}{section}

\newtheorem{theorem}{Theorem}[section]
\newtheorem{proposition}[theorem]{Proposition}
\newtheorem{corollary}[theorem]{Corollary}
\newtheorem{lemma}[theorem]{Lemma}

\newtheorem{observation}[theorem]{Observation}

\newtheorem{problem}[theorem]{Problem}
\newtheorem{example}[theorem]{Example}

\theoremstyle{definition}
\newtheorem{defn}[theorem]{Definition}

\newcommand{\Stir}{{\mathrm{Stir}}}

\newcommand{\symm}{{\mathfrak{S}}}
\newcommand{\II}{{\mathbf{I}}}
\newcommand{\gr}{{\mathrm {gr}}}
\newcommand{\BBB}{{\mathcal{B}}}

\newcommand{\MMM}{{\mathcal{M}}}

\newcommand{\EEE}{{\mathcal{E}}}
\newcommand{\PPP}{{\mathcal{P}}}

\newcommand{\AAA}{{\mathcal{A}}}

\newcommand{\CCC}{{\mathcal{C}}}

\newcommand{\ZZZ}{{\mathcal{Z}}}

\newcommand{\KKK}{{\mathcal{K}}}
\newcommand{\CC}{{\mathbb{C}}}
\newcommand{\QQ}{{\mathbb{Q}}}
\newcommand{\ZZ}{{\mathbb{Z}}}
\newcommand{\PP}{{\mathbb{P}}}

\newcommand{\WWW}{{\mathcal{W}}}
\newcommand{\UUU}{{\mathcal{U}}}

\newcommand{\Zpoints}{{\mathcal{Z}}}

\newcommand{\LLL}{\mathcal{L}}

\newcommand{\xxx}{{\mathbf{x}}}
\newcommand{\yyy}{{\mathbf{y}}}
\newcommand{\ttt}{{\mathbf{t}}}
\newcommand{\zzz}{{\mathbf{z}}}
\newcommand{\ccc}{{\mathbf{c}}}
\newcommand{\eee}{{\mathbf{e}}}

\newcommand{\aalpha}{{{\bm \alpha}}}

\newcommand{\im}{\text{im}}
\newcommand{\conv}{\text{conv}}

\newcommand{\supp}{\mathrm{supp}}

\newcommand{\initial}{\text{in}}

\newcommand{\Gr}{\mathrm{Gr}}

\newcommand{\PM}{\mathrm{PM}}
\newcommand{\std}{\text{std}}


\begin{document}

\title[Equivariant cohomology of Grassmannian spanning lines]
{Equivariant cohomology of Grassmannian spanning lines}

\author[Raymond Chou]{Raymond Chou}
\email{r2chou@ucsd.edu}

\author[Tomoo Matsumura]{Tomoo Matsumura}
\email{matsumura.tomoo@icu.ac.jp}

\author[Brendon Rhoades]{Brendon Rhoades}
\email{bprhoades@ucsd.edu}

\begin{abstract}
Given integers $n \geq k \geq d$, let $X_{n,k,d}$ be the moduli space of $n$-tuples of lines $(\ell_1, \dots, \ell_n)$ in $\CC^k$ such that $\ell_1 + \cdots + \ell_n$ has dimension $d$. We give a quotient presentation of the torus-equivariant cohomology of $X_{n,k,d}$. The form of this presentation, and in particular the torus parameters appearing therein, will arise from the orbit harmonics method of combinatorial deformation theory. 
\end{abstract}

\maketitle

\section{Introduction}
\label{sec:Introduction}

Let $X$ be a space carrying an action of the rank $k$ torus $T = (\CC^*)^k$. The {\em equivariant cohomology ring} $H^*_T(X;\ZZ)$ is an enhancement of the usual singular cohomology of $X$ which carries information about the $T$-action. If $X = \{\bullet\}$ is a single point, we have a natural identification
\begin{equation}
    H^*_T(\{\bullet\};\ZZ) = \ZZ[\ttt_k] := \ZZ[t_1, \dots, t_k]
\end{equation}
with the polynomial ring in $k$ variables.
The map $X \to \{\bullet\}$ to the one-point space equips $H^*_T(X;\ZZ)$ with the structure of a $\ZZ[\ttt_k]$-module. 
Computing and understanding  equivariant cohomology rings is a major theme in Schubert calculus \cite{AHMMS, GKM, GZ, KM, SW, Tymoczko}. 

In this paper we use a deformation technique called {\em orbit harmonics} as a tool for computing equivariant cohomology. We give a quotient presentation of the equivariant cohomology of the following moduli space. A {\em line} in a vector space $V$ is a one-dimensional subspace $\ell \subseteq V$.

\begin{defn}
    \label{def:x-space}
    For positive integers $n \geq k \geq d$, let $X_{n,k,d}$ be the  space of $n$-tuples $\ell_\bullet = (\ell_1, \dots, \ell_n)$ of lines in $\CC^k$ such that the sum $\ell_1 + \cdots + \ell_n$ has dimension $d$ as a vector subspace of $\CC^k$.
\end{defn}

Writing $\PP^{k-1}$ for the complex projective space of lines in $\CC^k$, the space $X_{n,k,d}$ is a locally closed subvariety of $(\PP^{k-1})^n$. The natural action of $T = (\CC^*)^k$ on  $\PP^{k-1}$ induces a $T$-action on  $(\PP^{k-1})^n$ via $$t \cdot (\ell_1, \dots, \ell_n) := (t \cdot \ell_1, \dots, t \cdot \ell_n).$$ The subspace $X_{n,k,d}$ is stable under this $T$-action, so we have an equivariant cohomology ring $H^*_T(X_{n,k,d};\ZZ)$.

When $n = k = d$, the space $X_{n,n,n}$ is homotopy equivalent to the classical variety of complete flags $V_\bullet = (V_0 \subset V_1 \subset \cdots \subset V_n)$ in $\CC^n$. When $k = d$, the variety $X_{n,k} := X_{n,k,k}$ was introduced by Pawlowski and Rhoades \cite{PR} to give a geometric model for the Haglund-Rhoades-Shimozono generalized coinvariant rings $R_{n,k}$ \cite{HRS}. Computing the equivariant cohomology of $X_{n,k}$ was left as an open problem  \cite[Prob. 9.8]{PR}; we solve this problem here (Corollary~\ref{cor:pr-problem}).

In order to state our main result right away, we need some notation. Let $\Gr(d,\CC^k)$ be the Grassmannian of $d$-dimensional subspaces of $\CC^k$. We have a natural surjection
\begin{equation}
    p: X_{n,k,d} \twoheadrightarrow \Gr(d,\CC^k)
\end{equation}
which sends an $n$-tuple $\ell_\bullet = (\ell_1,\dots,\ell_n)$ of lines to the vector space sum $p(\ell_\bullet) = \ell_1 + \cdots + \ell_n$. The natural action of $T = (\CC^*)^k$ on $\CC^k$ induces an action on $\Gr(d,\CC^k)$, and the equivariant cohomology ring $H^*_T(\Gr(d,\CC^k),\ZZ)$ is generated by the equivariant Chern classes of the rank $d$ tautological vector bundle $\UUU_d$ over $\Gr(d,\CC^k)$. 

In addition to the torus variables $\ttt_k = (t_1, \dots, t_k)$, we let $\xxx_n = (x_1, \dots, x_n)$ be a length $n$ list of $x$-variables and $\yyy_d = (y_1, \dots, y_d)$ be a length $d$ list of $y$-variables. We write $\ZZ[\xxx_n,\yyy_d,\ttt_k]$ for the polynomial ring over these variables with integer coefficients. The symmetric group $\symm_d$ acts on the $y$-variables in $\ZZ[\xxx_n,\yyy_d,\ttt_k]$. Write $\ZZ[\xxx_n,\yyy_d,\ttt_k]^{\symm_d}$ for the subring of $\symm_d$-invariants. Let $e_r$ and $h_r$ be the elementary and complete homogeneous symmetric polynomials of degree $r$.

\begin{theorem}
    \label{thm:borel-presentation}
    For positive integers $n \geq k \geq d$, let $I_{n,k,d} \subseteq \ZZ[\xxx_n,\yyy_d,\ttt_k]^{\symm_d}$ be the ideal generated by 
    \begin{itemize}
        \item $e_r(\ttt_k) - e_{r-1}(\ttt_k) h_1(\yyy_d) + \cdots + (-1)^r h_r(\yyy_d)$ for $r > k-d$, 
        \item $e_r(\xxx_n) - e_{r-1}(\xxx_n) h_1(\yyy_d) + \cdots + (-1)^r h_r(\yyy_d)$ for $r > n-d$, and
        \item $x_i^d - x_i^{d-1} e_1(\yyy_d) + \cdots + (-1)^d e_d(\yyy_d)$ for $i = 1, \dots, n$.
    \end{itemize}
    If $T = (\CC^*)^k$, the $T$-equivariant cohomology of $X_{n,k,d}$ has presentation
    \begin{equation}
        H^*_T(X_{n,k,d};\ZZ) = \ZZ[\xxx_n,\yyy_d,\ttt_k]^{\symm_d}/I_{n,k,d}
    \end{equation}
    where 
    \begin{itemize}
        \item $x_i$ represents the equivariant Chern class of the line bundle $\LLL_i$ over $X_{n,k,d}$ with fiber $\ell_i$ over $\ell_\bullet = (\ell_1, \dots, \ell_n)$,
        \item the variables $y_1, \dots, y_d$ represent equivariant Chern roots of the rank $d$ tautological vector bundle $\UUU_d$ over $\Gr(d,\CC^k)$ pulled back along $p: X_{n,k,d} \to \Gr(d,\CC^k)$, and
        \item $t_i \in H^2_T(X_{n,k,d};\ZZ)$ represents the equivariant Chern class of the tautological line bundle over the $i^{th}$ factor of $BT = (\PP^{\infty})^k$.
    \end{itemize}
\end{theorem}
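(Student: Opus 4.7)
The plan is to define a homomorphism $\varphi\colon \ZZ[\xxx_n,\yyy_d,\ttt_k]^{\symm_d}\to H^*_T(X_{n,k,d};\ZZ)$ sending the variables to the stated equivariant Chern classes, verify that the three relation families generating $I_{n,k,d}$ lie in $\ker\varphi$, establish surjectivity via the equivariant Leray-Hirsch theorem, and finally compare $\ZZ[\ttt_k]$-module ranks to conclude injectivity.

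The three families of relations arise from parallel Chern-class computations on natural equivariant bundles over $X_{n,k,d}$. Pulling back the tautological sequence $0\to \UUU_d\to \underline{\CC}^k \to \mathcal{Q}\to 0$ from $\Gr(d,\CC^k)$ yields the Chern-polynomial identity $c(\mathcal{Q})(z)=\prod_i(1+t_iz)\big/\prod_j(1+y_jz)$, whose $z^r$-coefficient must vanish for $r>k-d=\operatorname{rk}\mathcal{Q}$, giving the first family. The defining condition $\ell_1+\cdots+\ell_n=p(\ell_\bullet)$ supplies an equivariant surjection $\LLL_1\oplus\cdots\oplus\LLL_n\twoheadrightarrow p^*\UUU_d$ with rank $n-d$ kernel, and the analogous Chern-polynomial computation with $\xxx_n$ in place of $\ttt_k$ yields the second family. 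Each inclusion $\LLL_i\hookrightarrow p^*\UUU_d$ furnishes a nowhere-vanishing section of $p^*\UUU_d\otimes\LLL_i^\vee$, so its top Chern class $\prod_j(y_j-x_i)$ must vanish, giving the third family up to an overall sign.

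For surjectivity I would apply the equivariant Leray-Hirsch theorem to the Zariski-locally trivial bundle $p\colon X_{n,k,d}\to\Gr(d,\CC^k)$, whose fiber over $V$ is the spanning configuration space $X_{n,d}=X_{n,d,d}$. The base's equivariant cohomology is the standard Grassmannian Borel presentation in $\yyy_d,\ttt_k$, and the fiber's ordinary cohomology is the Haglund-Rhoades-Shimozono generalized coinvariant ring $R_{n,d}$, generated by the restrictions of $\xxx_n$ by Pawlowski-Rhoades. Lifting a monomial basis of $R_{n,d}$ to $H^*_T(X_{n,k,d};\ZZ)$ using the classes $x_i$, Leray-Hirsch yields a $\ZZ[\ttt_k]$-module basis lying in $\im\varphi$, whence $\varphi$ is surjective.

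Injectivity reduces to a $\ZZ[\ttt_k]$-rank comparison. The $T$-fixed locus consists of tuples $(\CC\eee_{a_1},\ldots,\CC\eee_{a_n})$ for functions $a\colon [n]\to[k]$ with $|a([n])|=d$, totalling $\binom{k}{d}\,d!\,\Stir(n,d)$; a Bialynicki-Birula decomposition from a generic one-parameter subgroup gives an affine paving, so $H^*_T(X_{n,k,d};\ZZ)$ is free of this rank over $\ZZ[\ttt_k]$. The main obstacle is showing the presented quotient has exactly the same rank, and this is where orbit harmonics enters in an essential way. The strategy is to specialize $\ttt_k$ to a generic $\zeta\in\CC^k$ and consider the finite point locus $Z_\zeta\subset \CC^n\times(\CC^d/\symm_d)$ obtained by evaluating $(x_i,\{y_j\})$ at each fixed point via $x_i\mapsto\zeta_{a_i}$ and $\{y_j\}\mapsto\{\zeta_i:i\in a([n])\}$. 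All three relation families vanish on $Z_\zeta$, giving a surjection from the specialized quotient onto $\CC[Z_\zeta]$; the delicate task is to prove this is an isomorphism, equivalently to prove the graded ($\ttt=0$) companion Hilbert series equals $|Z_\zeta|$ via a combinatorial monomial-basis or Gröbner argument. Executing this orbit-harmonics deformation step — which dictates the torus-parameter form of the relations in the theorem — is the principal technical hurdle.
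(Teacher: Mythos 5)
Your overall architecture matches the paper's: define $\varphi$ on the $\symm_d$-invariant polynomial ring, verify the three families of relations by the short exact sequences you describe (the top-Chern-class reformulation of the third relation is equivalent to the paper's $c_d^T(p^*\UUU_d/\LLL_i)=0$), use Leray-Hirsch over $\Gr(d,\CC^k)$ for surjectivity, and conclude by comparing ranks of free $\ZZ[\ttt_k]$-modules together with the Cayley--Hamilton surjection-implies-isomorphism lemma. Your fixed-point count also agrees with the paper's $\frac{k!}{(k-d)!}\Stir(n,d)$.

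The genuine gap is in the last paragraph, and specifically in the passage from $\QQ$ to $\ZZ$. The orbit-harmonics/Gr\"obner argument you sketch is carried out over $\CC$ (or $\QQ$), and over a field of characteristic zero the step that relates the ideal generated by $\symm_d$-invariant elements in the full ring to the ideal they generate inside the invariant subring is done with the Reynolds operator $\frac{1}{d!}\sum_{w\in\symm_d}w$. That operator is unavailable over $\ZZ$, so establishing that $\ZZ[\xxx_n,\yyy_d,\ttt_k]^{\symm_d}/I_{n,k,d}$ is a \emph{free} $\ZZ[\ttt_k]$-module of the correct rank requires a different argument than the one your sketch would produce. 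The paper resolves this by: (i) exhibiting an explicit $\ZZ$-spanning set $\CCC_{n,k,d}$ of the $\ttt=0$ invariant quotient built from $(n,d)$-substaircase $\xxx$-monomials times Schur polynomials $s_\lambda(\yyy_d)$ with $\lambda\subseteq(k-d)^d$, using the Jacobi--Trudi identity and the integral HRS Gr\"obner basis to do the rewriting over $\ZZ$; (ii) proving $\CCC_{n,k,d}$ is $\ZZ$-linearly independent by mapping it onto a known $\ZZ$-basis of $H^*(X_{n,k,d};\ZZ)$, which is torsion-free by the Pawlowski--Rhoades affine paving; and then (iii) bootstrapping to $\CCC_{n,k,d}^\ttt$ by the multiply-by-$m_\ttt$ degree-descent argument. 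Without an argument of this kind, your rank comparison would only establish the presentation of $H^*_T(X_{n,k,d};\QQ)$, not of $H^*_T(X_{n,k,d};\ZZ)$. You flag the deformation step as the principal hurdle, but the specific integrality obstruction it conceals --- that $d!$ is not invertible --- needs to be named and addressed.
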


The geometric analysis used in the proof of Theorem~\ref{thm:borel-presentation} involves applying the Leray-Hirsch Theorem to the fiber bundle $p: X_{n,k,d} \to \Gr(d,\CC^k)$, invoking affine paving results appearing in \cite{PR}, and using Whitney sum reasoning to deduce that the relations of $I_{n,k,d}$ hold in the ring $H^*_T(X_{n,k,d};\ZZ)$; see Section~\ref{sec:Cohomology} for details. 

The algebraic analysis of the quotient ring in Theorem~\ref{thm:borel-presentation} uses ideas which we hope will be helpful in obtaining quotient presentations of other equivariant cohomology rings. Let $\Zpoints \subseteq \QQ^n$ be a finite locus of points in rational affine $n$-space $\QQ^n$. We have the vanishing ideal
\begin{equation}
    \II(\Zpoints) := \{ f \in \QQ[\xxx_n] \,:\, f(\zzz) = 0 \text{ for all $\zzz \in \Zpoints$} \} \subseteq \QQ[\xxx_n]
\end{equation}
of the locus $\Zpoints$. The {\em orbit harmonics} method associates to $\Zpoints$ the homogeneous quotient ring $\QQ[\xxx_n]/ \gr \, \II(\Zpoints)$ where $\gr \, \II(\Zpoints) \subseteq \QQ[\xxx_n]$ is the associated graded ideal of $\II(\Zpoints)$. Geometrically, this corresponds to a flat deformation of the locus $\Zpoints$ to a subscheme of $\QQ^n$ supported at the origin (which will be nonreduced whenever $|\Zpoints| > 1$). We have an isomorphism of vector spaces 
\begin{equation}
    \label{eq:orbit-harmonics-isomorphism}
    \QQ[\Zpoints] = \QQ[\xxx_n]/\II(\Zpoints) \cong \QQ[\xxx_n]/\gr \, \II(\Zpoints)
\end{equation}
where $\QQ[\xxx_n]/\gr \, \II(\Zpoints)$ is a graded vector space. If the locus $\Zpoints$ is a stable under the action of a finite subgroup $G \subseteq GL_n(\QQ)$, \eqref{eq:orbit-harmonics-isomorphism} is an isomorphism of $G$-modules, where $\QQ[\xxx_n]/\gr \, \II(\Zpoints)$ is a graded $G$-module.
The orbit harmonics deformation is shown schematically below in the case of a locus of size $|\Zpoints| = 6$ in $\QQ^2$ carrying an action of $G \cong \symm_3$ via reflection in the three displayed lines.

\begin{center}
 \begin{tikzpicture}[scale = 0.2]
\draw (-4,0) -- (4,0);
\draw (-2,-3.46) -- (2,3.46);
\draw (-2,3.46) -- (2,-3.46);

 \fontsize{5pt}{5pt} \selectfont
\node at (0,2) {$\bullet$};
\node at (0,-2) {$\bullet$};

\node at (-1.73,1) {$\bullet$};
\node at (-1.73,-1) {$\bullet$};
\node at (1.73,-1) {$\bullet$};
\node at (1.73,1) {$\bullet$};

\draw[thick, ->] (6,0) -- (8,0);

\draw (10,0) -- (18,0);
\draw (12,-3.46) -- (16,3.46);
\draw (12,3.46) -- (16,-3.46);

\draw (14,0) circle (15pt);
\draw(14,0) circle (25pt);
\node at (14,0) {$\bullet$};

 \end{tikzpicture}
\end{center}

Let $X$ be a variety carrying an action of $T = (\CC^*)^k$. In a number of settings, the ordinary cohomology $H^*(X;\QQ)$ has an orbit harmonics interpretation as follows. There exists a non-empty Zariski-open set $U \subseteq \QQ^k$ such that
\begin{itemize}
    \item for any $\aalpha_k = (\alpha_1, \dots, \alpha_k) \in U$ we have a finite locus $\Zpoints(\aalpha_k) \subseteq \QQ^n$,
    \item the homogeneous ideal $\gr \, \II(\Zpoints(\aalpha_k)) \subseteq \QQ[\xxx_n]$ does not depend on $\aalpha_k \in U$, and
    \item for any $\aalpha_k \in U$, we have the presentation $H^*(X;\QQ) = \QQ[\xxx_n]/ \gr \, \II(\Zpoints(\aalpha_k))$.
\end{itemize}
One should think of $\QQ^k$ as a rational form $\mathfrak{t}_\QQ$ of the Lie algebra $\mathfrak{t} \cong \CC^k$ of $T$. In the type A setting of this paper, the Zariski-open set $U$ will always be the set of points $\aalpha_k = (\alpha_1, \dots, \alpha_k) \in \QQ^k$ with distinct coordinates.

The cohomology rings of (generalized) Springer fibers \cite{GP, Griffin, GLW} and spaces of spanning subspace configurations \cite{HRS, PR, RhoadesSpanning, RW} have orbit harmonics interpertations as in the last paragraph.
With an eye towards equivariant cohomology,  we put the loci $\Zpoints(\aalpha_k)$ into a family by letting $\Zpoints \subseteq \QQ^{n+k}$ be the variety
\begin{equation}
    \Zpoints = \text{Zariski closure of } \bigcup_{\aalpha_k \, \in \, U} \Zpoints(\aalpha_k) \times \{ \aalpha_k \}
\end{equation}
Writing $\QQ[\xxx_n,\ttt_k]$ for the coordinate ring of $\QQ^{n+k}$, we have the vanishing ideal $\II(\Zpoints) \subseteq \QQ[\xxx_n,\ttt_k]$.  
The approach of this paper is to {\em predict} that the equivariant cohomology of $X$ has quotient presentation
\begin{equation}
    H^*_T(X;\QQ) = \QQ[\xxx_n,\ttt_k]/\II(\Zpoints)
\end{equation}
where the $t_i$ come from the torus action. Theorem~\ref{thm:borel-presentation} establishes this prediction when $X = X_{n,k,d}$; further algebraic arguments allow for the passage from rational to integral cohomology. Assuming the action of $T$ on $X$ is equivariantly formal, ordinary and equivariant cohomology are related by 
\begin{equation}
    \label{eq:ordinary-equivariant-relationship}
    H^*(X;\QQ) = \QQ \otimes_{\QQ[\ttt_k]} H^*_T(X;\QQ)
\end{equation}
where the $t$-variables act by 0 on $\ZZ$. From an orbit harmonics point of view, Equation~\eqref{eq:ordinary-equivariant-relationship} may be interpreted as the deformation $\Zpoints(\aalpha_k) \leadsto \gr \, \II(\Zpoints(\aalpha_k))$ sending the locus $\Zpoints(\aalpha_k)$ linearly to the origin. 

As a concrete example of this approach (and a simplified version of Theorem~\ref{thm:borel-presentation}), let $X$ be the moduli space $X_{n,k}$ of $n$-tuples $\ell_\bullet = (\ell_1, \dots, \ell_n)$ of lines in $\CC^k$ with $\ell_1 + \cdots + \ell_k = \CC^k$. Let  $\aalpha_k = (\alpha_1, \dots, \alpha_k)$ be $k$ distinct rational numbers. Write $\Zpoints_{n,k}(\aalpha_k) \subseteq \QQ^n$ for the locus of points 
\begin{equation}
    \Zpoints_{n,k}(\aalpha_k) := \{ (z_1, \dots, z_n) \in \QQ^n \,:\, \{ z_1, \dots, z_n \} = \{\alpha_1, \dots, \alpha_k \} \}
\end{equation}
whose coordinates consist exactly of the parameters $\alpha_1, \dots, \alpha_k$.
By results of Haglund-Rhoades-Shimozono \cite{HRS} and Pawlowski-Rhoades \cite{PR}, the ordinary cohomology of $X_{n,k}$  is given by
\begin{equation}
    H^*(X_{n,k};\QQ) = \QQ[\xxx_n]/\gr \, \II(\Zpoints_{n,k}(\aalpha_k))
\end{equation}
and the homogeneous ideal $\gr \, \II(\Zpoints_{n,k}(\aalpha_k))
\subseteq \QQ[\xxx_n]$ has generating set
\begin{equation}
    \gr \, \II(\Zpoints_{n,k}(\aalpha_k)) = (x_1^k, \dots, x_n^k, e_n(\xxx_n), e_{n-1}(\xxx_n), \dots, e_{n-k+1}(\xxx_n)).
\end{equation}
Replacing the parameters $\alpha_i$ with variables $t_i$ , we may consider the infinite locus
\begin{equation}
    \Zpoints_{n,k} = \{ (z_1, \dots, z_n, t_1, \dots, t_k) \in \QQ^{n+k} \,:\, \{ z_1, \dots, z_n \} = \{t_1, \dots, t_k \} \}.
\end{equation}
Substituting distinct field elements $t_i \to \alpha_i$ for the $t_i$ recovers the locus $\Zpoints_{n,k}(\aalpha_k)$; taking the linear limit as $t_i \to 0$ recovers the orbit harmonics quotient.
It follows from results in in Section~\ref{sec:Quotient} that the ideal $\II(\Zpoints_{n,k}) \subseteq \QQ[\xxx_n,\ttt_k]$ is generated by 
\begin{itemize}
    \item $(x_i - t_1) \cdots (x_i - t_k)$ for $1 \leq i \leq n$, and
    \item $e_r(\xxx_n) - e_{r-1}(\xxx_n) h_1(\ttt_k) + \cdots + (-1)^r h_r(\ttt_k)$ for $r > n-k$.
\end{itemize}
Theorem~\ref{thm:borel-presentation} implies  that the rational equivariant cohomology of $X_{n,k}$  has presentation
\begin{equation}
    H^*_T(X_{n,k};\QQ) = \QQ[\xxx_n,\ttt_k]/\II(\Zpoints_{n,k}).
\end{equation}

The rest of the paper is organized as follows. In {\bf Section~\ref{sec:Background}} we review background material on symmetric polynomials, commutative algebra, and geometry. In {\bf Section~\ref{sec:Quotient}} we use orbit harmonics to analyze the algebraic quotient ring in Theorem~\ref{thm:borel-presentation}. In {\bf Section~\ref{sec:Cohomology}} we combine these orbit harmonics results with geometric reasoning to prove Theorem~\ref{thm:borel-presentation}.  {\bf Section~\ref{sec:Conclusion}} gives some future directions and open problems.

\section{Background}
\label{sec:Background}

\subsection{Combinatorics} For $n \geq 0$, we abbreviate $[n] := \{1,\dots,n\}$.  
Let $\Stir(n,d)$ be the {\em  Stirling number of the second kind} counting $d$-block set partitions of $[n]$. The number of surjections $[n] \twoheadrightarrow [d]$ is counted by $d! \cdot \Stir(n,d)$.  More generally, for $n \geq k \geq d$ we have
\begin{equation}
    \frac{k!}{(k-d)!} \cdot \Stir(n,d) = \text{number of functions $[n] \to [k]$ whose image has size $d$.}
\end{equation}

A {\em partition} of $n$ is a weakly decreasing sequence $\lambda = (\lambda_1 \geq \cdots \geq \lambda_r)$ of positive integers with $\lambda_1 + \cdots + \lambda_r = n$. We write $\lambda \vdash n$ to indicate that $\lambda$ is a partition of $n$ and write $|\lambda| = n$ for the sum of the parts of $\lambda$. 

Let $\xxx_n = (x_1, \dots, x_n)$ be a list of $n$ variables and let $\ZZ[\xxx_n] = \ZZ[x_1, \dots, x_n]$ be the polynomial ring in these variables with integer coefficients. The symmetric group $\symm_n$ acts on $\ZZ[\xxx_n]$ by subscript permutation. Elements of the invariant ring $\ZZ[\xxx_n]^{\symm_n}$ are called {\em symmetric polynomials}. Given $d > 0$, the degree $d$ {\em elementary} and {\em complete homogeneous} symmetric polynomials are given by
\begin{equation}
    e_d(\xxx_n) := \sum_{1 \leq i_1 < \cdots < i_d \leq n} x_{i_1} \cdots x_{i_d} \quad \text{and} \quad
    h_d(\xxx_n) := \sum_{1 \leq i_1 \leq \cdots \leq i_d \leq n} x_{i_1} \cdots x_{i_d}.
\end{equation}
Either of the sets $\{e_1(\xxx_n), \dots, e_n(\xxx_n)\}$ or $\{h_1(\xxx_n), \dots, h_n(\xxx_n)\}$ freely generates $\ZZ[\xxx_n]^{\symm_n}$ as a $\ZZ$-algebra.
If $\lambda = (\lambda_1 \geq \cdots \geq \lambda_r)$ is a partition of $n$, the {\em Schur polynomial} $s_\lambda(\xxx_n)$ is defined by 
\begin{equation}
    s_\lambda(\xxx_n) := \det \begin{pmatrix} h_{\lambda_i - i + j}(\xxx_n) \end{pmatrix}_{1 \leq i, j \leq r}.
\end{equation}
Here $h_0(\xxx_n) = 1$ and $h_d(\xxx_n) =  0$ for $d < 0$.

\subsection{Commutative algebra} Endow the polynomial ring $\QQ[\xxx_n] = \bigoplus_{d \geq 0} \QQ[\xxx_n]_d$ with its usual degree grading.  For a nonzero polynomial $f \in \QQ[\xxx_n] - \{0\}$ write $\tau(f) \in \QQ[\xxx_n]$ for the top degree homogeneous component of $f$. Explicitly, if $f = f_d + \cdots + f_1 + f_0$ where $f_i$ is homogeneous of degree $i$ and $f_d \neq 0$, we have $\tau(f) = f_d$. If $I \subseteq \QQ[\xxx_n]$ is an ideal, the {\em associated graded ideal} $\gr \, I \subseteq \QQ[\xxx_n]$ has generating set 
\begin{equation}
    \gr \, I := ( \tau(f) \,:\, f \in I, \, f \neq 0).
\end{equation}
The ideal $\gr \, I$ is homogeneous and the quotient $\QQ[\xxx_n]/\gr \, I$ is a graded $\QQ$-algebra. We have the following standard result relating vector space bases of $\QQ[\xxx_n]/I$ and $\QQ[\xxx_n]/\gr \, I$; for a proof see e.g. \cite[Lem. 3.15]{RhoadesViennot}.

\begin{lemma}
    \label{lem:graded-basis} 
    Let $I \subseteq \QQ[\xxx_n]$ be an ideal and let $\BBB$ be a family of homogeneous elements of $\QQ[\xxx_n]$ which descends to a $\QQ$-basis of $\QQ[\xxx_n]/\gr \, I$. Then $\BBB$ descends to a $\QQ$-basis of $\QQ[\xxx_n]/I$.
\end{lemma}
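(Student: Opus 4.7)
The plan is to prove the two standard directions — spanning and linear independence — by exploiting the compatibility between the degree filtration on $\QQ[\xxx_n]/I$ and the grading on $\QQ[\xxx_n]/\gr\,I$, noting throughout that since $\gr\,I$ is homogeneous and $\BBB$ consists of homogeneous elements, the image of $\BBB$ in $\QQ[\xxx_n]/\gr\,I$ respects the grading degree by degree.

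For linear independence, I would argue by contradiction. Suppose a finite $\QQ$-linear combination $f := \sum_{b \in \BBB} c_b \, b$ lies in $I$ with not all $c_b$ zero, and let $d$ be the largest degree among those $b$ with $c_b \neq 0$. Because every $b \in \BBB$ is homogeneous, the top-degree component $\tau(f)$ equals $\sum_{b : \deg b = d} c_b \, b$, a nonzero $\QQ$-linear combination of degree-$d$ members of $\BBB$. But $f \in I$ forces $\tau(f) \in \gr\,I$, contradicting the hypothesis that $\BBB$ descends to a basis of $\QQ[\xxx_n]/\gr\,I$.

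For spanning, I would induct on $\deg f$. Given $f \in \QQ[\xxx_n]$ of degree $d$, expand $\tau(f) = \sum_{b \in \BBB, \, \deg b = d} c_b \, b + g$ with $g$ homogeneous of degree $d$ in $\gr\,I$. The key technical step — and what I expect to be the main obstacle — is the lifting observation that every homogeneous $g \in (\gr\,I)_d$ arises as $\tau(h)$ for some $h \in I$ of degree exactly $d$: write $g = \sum_i p_i \, \tau(h_i)$ using the defining generators of $\gr\,I$, restrict to the degree-$d$ summands, and take $h := \sum_i p_i h_i$, whose top-degree component is by construction $g$. Subtracting $\sum_b c_b \, b + h$ from $f$ strictly lowers the degree, so induction yields $f \in \mathrm{span}_\QQ(\BBB) + I$. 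This lifting step is the only subtle point — once it is in place, both halves of the lemma follow mechanically, and the argument uses nothing about $\QQ$ beyond being a field nor anything about $\xxx_n$ beyond the standard grading.
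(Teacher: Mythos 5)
Your argument is correct and takes essentially the same route the paper itself follows for the closely analogous Claim inside the proof of Lemma~\ref{lem:vector-space-relationship} (for Lemma~\ref{lem:graded-basis} itself the paper only cites an external reference). Your explicit justification of the lifting step --- that every homogeneous $g \in (\gr I)_d$ is realized as $\tau(h)$ for some $h \in I$ of degree $d$ --- is the one point the paper's version of the argument leaves implicit, and you handle it correctly; the only microscopic thing left unstated is that in the independence half $f \neq 0$, which follows immediately since a nontrivial $\QQ$-linear relation among homogeneous elements of $\BBB$ would already contradict independence modulo $\gr I$.
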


If $\Zpoints \subseteq \QQ^n$ is any subset of affine $n$-space over $\QQ$, we have the vanishing ideal $\II(\Zpoints) \subseteq \QQ[\xxx_n]$ given by
\begin{equation}
    \II(\Zpoints) := \{ f \in \QQ[\xxx_n] \,:\, f(\zzz) = 0 \text{ for all $\zzz \in \Zpoints$ } \}.
\end{equation}
The {\em coordinate ring} of polynomial functions $\Zpoints \to \QQ$ is given by
\begin{equation}
    \QQ[\Zpoints] := \QQ[\xxx_n]/\II(\Zpoints).
\end{equation}
If $\Zpoints$ is finite, by multivariate Lagrange interpolation any function $\Zpoints \to \QQ$ is a polynomial function and we may regard $\QQ[\Zpoints]$ as the $\QQ$-vector space with basis $\Zpoints$. As in the introduction, for $\Zpoints$ finite, orbit harmonics gives a vector space isomorphism
\begin{equation}
    \QQ[\Zpoints] = \QQ[\xxx_n]/\II(\Zpoints) \cong \QQ[\xxx_n]/\gr \, \II(\Zpoints)
\end{equation}
where $\QQ[\xxx_n]/\gr \, \II(\Zpoints)$ is a graded vector space. If $\Zpoints$ is stable under the action of a finite subgroup $G$ of $GL_n(\QQ)$, this is also an isomorphism of $G$-modules.

Let $R$ be any commutative ring and let $M$ be an $R$-module. The module $M$ is {\em free} if it admits an $R$-basis. The cardinality of any such basis is uniquely determined by $M$, and called the {\em rank} of $M$. We will need the following consequence of the Cayley-Hamilton Theorem; see e.g. \cite[Cor. 4.4]{Eisenbud}.

\begin{lemma}
    \label{lem:surjection-lemma}
    Let $R$ be a commutative ring and let $M$ be a free $R$-module of finite rank. Any surjective $R$-module endomorphism $\varphi: M \to M$ is an isomorphism.
\end{lemma}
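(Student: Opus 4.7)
The plan is to invoke the determinant trick, which is the standard module-theoretic reformulation of Cayley-Hamilton. Since $M$ is free of finite rank, I would begin by fixing an $R$-basis $e_1, \dots, e_n$ of $M$ and identifying $\varphi$ with its matrix $B \in \Mat_n(R)$ in this basis. Because $\varphi$ is already surjective, proving that it is an isomorphism reduces to showing that $B$ admits a two-sided inverse in $\Mat_n(R)$.

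The most direct route exploits surjectivity immediately. For each basis vector $e_i$, surjectivity of $\varphi$ produces some $f_i \in M$ with $\varphi(f_i) = e_i$. Expanding each $f_i$ in the basis yields a matrix $C \in \Mat_n(R)$ satisfying $BC = I_n$. Taking determinants gives $\det(B) \det(C) = 1$, so $\det(B)$ is a unit of $R$. The adjugate formula then produces an honest inverse of $B$ in $\Mat_n(R)$, and $\varphi$ is an isomorphism.

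An alternative route that more transparently invokes Cayley-Hamilton proceeds as follows: promote $M$ to a finitely generated $R[t]$-module where $t$ acts by $\varphi$. Surjectivity of $\varphi$ gives $tM = M$, and the determinant trick then produces a polynomial $q(t) \in R[t]$ with $(1 + t\,q(t)) \cdot M = 0$. Substituting $\varphi$ for $t$ yields $\varphi \circ (-q(\varphi)) = \mathrm{id}_M$, and because $\varphi$ and $q(\varphi)$ commute as endomorphisms of $M$, we also obtain $(-q(\varphi)) \circ \varphi = \mathrm{id}_M$, so $\varphi$ is an isomorphism. Either route is only a few lines long and presents no real obstacle; the only subtle point is the passage from a one-sided matrix identity to a two-sided one, which is immediate from the determinantal computation above and relies on commutativity of $R$.
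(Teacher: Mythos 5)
Your proof is correct. The paper itself gives no argument for this lemma, instead citing it directly to Eisenbud's text (where it appears as a consequence of the Cayley--Hamilton theorem for finitely generated modules). Your second route is precisely that argument, while your first route is a more elementary shortcut available because $M$ is assumed free: from $\varphi$ surjective one immediately gets a right inverse matrix $C$ with $BC = I_n$, hence $\det(B)$ a unit, hence $B$ invertible via the adjugate. The only point worth stressing is the one you already flag: the promotion from the one-sided identity $BC = I_n$ to a two-sided inverse goes through the determinant and uses commutativity of $R$; without that, the conclusion would fail (free modules of finite rank over noncommutative rings can have proper surjective endomorphisms). Either route is complete and at the right level of generality for the paper's use of the lemma.
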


We review some basic ideas from Gr\"obner theory. A total order $\prec$ on the monomials in $\xxx_n$ is a {\em term order} if 
\begin{itemize}
    \item we have $1 \preceq m$ for all monomials $m$, and
    \item if $m_1 \preceq m_2$, we have $m_1 m_3 \preceq m_2 m_3$ for all monomials $m_1, m_2, m_3$. 
\end{itemize}
The only term order we will use in this paper is the {\em lexicographical} term order defined by $x_1^{a_1} \cdots x_n^{a_n} \prec x_1^{b_1} \cdots x_n^{b_n}$ if there exists $1 \leq i \leq n$ such that $a_1 = b_1, \dots, a_{i-1} = b_{i-1}$ and $a_i < b_i$.

Let $\prec$ be a term order on the monomials of $\QQ[\xxx_n]$.
If $f \in \QQ[\xxx_n]$ is a nonzero, write $\initial_\prec(f)$ for the $\prec$-largest monomial appearing in $f$. If $I \subseteq \QQ[\xxx_n]$ is an ideal, the {\em initial ideal} is the monomial ideal $\initial_\prec(I) \subseteq \QQ[\xxx_n]$ with generating set
\begin{equation}
    \initial_\prec(I) := ( \initial_\prec(f) \,:\, f \in I, \, f \neq 0 ).
\end{equation}
A finite set $G = \{g_1, \dots, g_r\} \subset I$ of nonzero elements in $I$ is a {\em Gr\"obner basis} of $I$ if 
\begin{equation}
    (\initial_\prec(g_1), \dots, \initial_\prec(g_r)) = \initial_\prec(I)
\end{equation}
as ideals in $\QQ[\xxx_n]$. In this case we have $I = (G)$. The set $\BBB$ of monomials in $\QQ[\xxx_n]$ which do not belong to $\initial_\prec(I)$ descends to a vector space basis of $\QQ[\xxx_n]/I$. The basis $\BBB$ is the {\em standard monomial basis} of $\QQ[\xxx_n]/I$ with respect to the term order $\prec$.

\subsection{Cohomology} Let $X$ be a smooth complex algebraic variety. An {\em affine paving} of $X$ is a filtration
\begin{equation}
    \varnothing = X_0 \subseteq X_1 \subseteq \cdots \subseteq X_m = X
\end{equation}
of $X$ by closed subvarieties such that $X_i - X_{i-1}$ is isomorphic to a disjoint union of affine spaces (possibly of varying dimensions) for each $i$. These affine spaces are the {\em cells} of the affine paving. Since $X$ is smooth, the Zariski closure $\overline{C}$ of any cell $C$ yields a class $[\overline{C}]$ in the ordinary cohomology ring $H^*(X;\ZZ)$ of cohomological degree twice the codimension of $C$ in $X$.

\begin{lemma}
    \label{lem:affine-paving-cohomology}
    Let $X$ be a smooth complex variety which admits an affine paving. The ordinary cohomology ring $H^*(X;\ZZ)$ is a free $\ZZ$-module with basis given by the classes $[\overline{C}]$ where $C$ ranges over the cells of the paving. 
\end{lemma}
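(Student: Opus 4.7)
The plan is to prove the statement by induction on $i$, establishing at each stage that $H^*(X_i; \ZZ)$ is a free $\ZZ$-module, concentrated in even degrees, with basis given by the classes $[\overline{C}]$ for cells $C$ contained in $X_i$, placed in cohomological degree $2\codim_\CC C$. The base case $X_0 = \varnothing$ is immediate.

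For the inductive step, I would apply the long exact sequence in singular cohomology of the pair $(X_i, X_{i-1})$ with $\ZZ$-coefficients. Via excision, $H^*(X_i, X_{i-1}; \ZZ) \cong H^*_c(X_i \setminus X_{i-1}; \ZZ)$, and since $X_i \setminus X_{i-1}$ is a disjoint union of affine cells $\AAAA^{a_1}, \dots, \AAAA^{a_m}$ with $H^k_c(\AAAA^a; \ZZ) = \ZZ$ concentrated in degree $2a$, the relative cohomology is a free $\ZZ$-module concentrated in even degrees. The inductive hypothesis forces $H^{\mathrm{odd}}(X_{i-1}; \ZZ) = 0$, so the long exact sequence collapses into split short exact sequences
\begin{equation*}
0 \to H^{2k}(X_i, X_{i-1}; \ZZ) \to H^{2k}(X_i; \ZZ) \to H^{2k}(X_{i-1}; \ZZ) \to 0
\end{equation*}
of free $\ZZ$-modules. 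This already establishes freeness and the expected total rank of $H^*(X_i; \ZZ)$ in each even degree.

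What remains is to identify the $[\overline{C}]$ as an explicit basis. Using smoothness of $X$, I would define $[\overline{C}] \in H^{2\codim C}(X; \ZZ)$ as the Poincar\'e dual of the Borel--Moore fundamental class of the (possibly singular) subvariety $\overline{C}$, then restrict to $X_i$. For a newly added cell $C \subseteq X_i \setminus X_{i-1}$, I would check that under the map $H^{2\codim C}(X_i; \ZZ) \to H^{2\codim C}(X_i, X_{i-1}; \ZZ)$ the class $[\overline{C}]$ hits the generator of the summand indexed by $C$, while the other new closures map to zero in the $C$-summand because they meet $C$ in strictly positive codimension (or not at all). This reduces to the local computation that the Poincar\'e dual of the Borel--Moore fundamental class $[\AAAA^a] \in H^{BM}_{2a}(\AAAA^a)$ agrees with the generator of $H^{2a}_c(\AAAA^a; \ZZ) \cong \ZZ$. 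Once this identification is in hand, the $[\overline{C}]$ for new cells furnish a canonical splitting of the short exact sequence above, and the induction closes.

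The main obstacle is precisely this last compatibility between the algebro-geometric cycle class and the topological generator of compactly supported cohomology of the cell; the other steps are formal. This point is standard and can be handled either by invoking Poincar\'e--Lefschetz duality for the smooth locally closed subset $C \subseteq X$ (with normal bundle $N_{C/X}$, whose Thom class restricts to the cell generator) or by a direct check via a tubular neighborhood of a smooth point of $C$.
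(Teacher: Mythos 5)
The paper states this lemma without proof, treating it as a standard background fact (compare Lemma~\ref{lem:affine-paving-equivariant}, for which a reference is given). So there is no paper proof to compare against; I will assess your argument on its own terms.

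Your overall strategy---inducting along the filtration, using a long exact sequence to split things into free pieces, and then identifying cycle classes of cell closures with generators---is the right one. However, there is a genuine gap at the step where you write that, ``via excision, $H^*(X_i, X_{i-1};\ZZ) \cong H^*_c(X_i \setminus X_{i-1};\ZZ)$.'' This identification is \emph{not} a consequence of excision and it fails when $X_i$ is non-compact, which is exactly the situation in this paper (the strata $X_i \subseteq X_{n,k,d}$ are non-compact). The isomorphism $H^*(Y,A) \cong \tilde H^*(Y/A) \cong H^*_c(Y\setminus A)$ requires $Y$ to be compact, since only then is $Y/A$ the one-point compactification of $Y\setminus A$. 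A minimal counterexample: $Y = \CC$, $A = \{0\}$. Then $H^*(\CC,\{0\}) = 0$ because $\CC$ deformation retracts onto $\{0\}$, while $H^*_c(\CC^*)$ is nonzero (it is $\ZZ$ in degrees $1$ and $2$). So the ordinary relative cohomology of the pair $(X_i, X_{i-1})$ is not the right object to feed into the argument.

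The standard fix is to avoid the singular cohomology of the pair altogether. Work instead with Borel--Moore homology $H^{BM}_*$ (equivalently compactly supported cohomology $H^*_c$), which has a long exact sequence attached to any decomposition of a locally compact space into a closed piece $A$ and its open complement $U$:
\begin{equation*}
\cdots \to H^{BM}_k(A) \to H^{BM}_k(X_i) \to H^{BM}_k(U) \to H^{BM}_{k-1}(A) \to \cdots .
\end{equation*}
Since $U = X_i \setminus X_{i-1}$ is a disjoint union of affine cells, $H^{BM}_*(U)$ is free and concentrated in even degrees; the inductive hypothesis then forces the sequence to split, exactly as in your argument. Crucially, this works regardless of whether the $X_i$ are compact, and it makes sense even though the $X_i$ are singular (which your version also glosses over: $[\overline{C}]$ as a Poincar\'e dual does not directly make sense in $H^*(X_i;\ZZ)$ when $X_i$ is singular, whereas $[\overline{C}] \in H^{BM}_{2\dim C}(X_i)$ is unproblematic since $H^{BM}_*$ is covariant for proper maps). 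The identification of the pushed-forward fundamental classes as a basis proceeds as you describe: $[\overline{C}]$ hits the generator of its own summand of $H^{BM}_*(U)$ and maps to zero in the others. Only at the very end, once one has a free $\ZZ$-basis of $H^{BM}_*(X)$, does one invoke smoothness of $X$ and Poincar\'e duality $H^k(X;\ZZ)\cong H^{BM}_{2n-k}(X;\ZZ)$ to recast the result in ordinary cohomology. Your local compatibility observation (that the Borel--Moore fundamental class of a cell matches the Thom class of its normal bundle) is then the right ingredient at the last step.
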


Now let $T = (\CC^*)^k$ be the rank $k$ complex torus and assume that $X$ carries an algebraic action of $T$. 
Let $ET$ be a contractible space which carries a free right action of $T$. We define the quotient space $ET \times_T X := ET \times X/\sim$ where $(e,x) \sim (e \cdot t^{-1}, t \cdot x)$ for all $e \in ET$, $x \in X$, and $t \in T$. The {\em equivariant cohomology ring} is defined by
\begin{equation}
    H^*_T(X;\ZZ) := H^*(ET \times_T X;\ZZ)
\end{equation}
where $H^*(ET \times_T X;\ZZ)$ is the usual singular cohomology. For an excellent textbook treatment of equivariant cohomology, we refer the reader to Anderson and Fulton \cite{AF}.

When $X = \{\bullet\}$ is a one-point space, the quotient $ET \times_T \{ \bullet \} = BT = (\PP^{\infty})^k$ may be identified with the classifying space of $T$. Here $\PP^{\infty}$ is the infinite-dimensional complex projective space. Therefore, we have
$H^*_T( \{\bullet\}; \ZZ) = \ZZ[\ttt_k]$
where $\ttt_k = (t_1, \dots, t_k)$. If $X$ is any space with a $T$-action, the map $X \to \{\bullet\}$ endows $H^*_T(X;\ZZ)$ with the structure of a $\ZZ[\ttt_k]$-module.

An affine paving of $X$ is {\em $T$-invariant} if all of its cells are closed under the action of $T$. As before, the Zariski closures of the cells of a $T$-invariant paving give classes in $H_T^*(X;\ZZ)$. Lemma~\ref{lem:affine-paving-cohomology} extends to the equivariant setting. For a proof of the next result, see e.g. \cite[Cpt. 4, Prop. 7.1]{AF}.

\begin{lemma}
    \label{lem:affine-paving-equivariant}
    Let $X$ be a smooth complex variety with an action of $T$ which admits a $T$-invariant affine paving. The equivariant cohomology ring $H^*_T(X;\ZZ)$ is a free $\ZZ[\ttt_k]$-module with basis given by the classes $[\overline{C}]$ where $C$ ranges over the cells of the paving. 
\end{lemma}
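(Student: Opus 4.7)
The plan is to induct on the length $m$ of the filtration, with the base case $X_0 = \varnothing$ being trivial. For the inductive step, I would invoke the long exact sequence in $T$-equivariant cohomology for the pair $(X_i, X_{i-1})$:
\begin{equation*}
\cdots \to H^j_T(X_i, X_{i-1};\ZZ) \to H^j_T(X_i;\ZZ) \to H^j_T(X_{i-1};\ZZ) \to H^{j+1}_T(X_i, X_{i-1};\ZZ) \to \cdots.
\end{equation*}
The relative term should be computed via excision together with the $T$-equivariant Thom isomorphism applied to each cell. Writing $X_i - X_{i-1} = C_1 \sqcup \cdots \sqcup C_r$ as a disjoint union of $T$-stable affine cells $C_j \cong \AAAA^{d_j}$, each cell embeds as the zero section of its $T$-equivariant normal bundle in a tubular neighborhood in $X_i$ (using smoothness of the ambient $X$), and the Thom isomorphism yields $H^j_T(X_i, X_{i-1};\ZZ) \cong \bigoplus_{j=1}^r H^{j - 2\codim C_j}_T(\pt;\ZZ)$.

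Since $H^*_T(\pt;\ZZ) = \ZZ[\ttt_k]$ is concentrated in even degrees, the relative cohomology is free over $\ZZ[\ttt_k]$ and vanishes in odd degrees. Combined with the inductive hypothesis that $H^*_T(X_{i-1};\ZZ)$ is free over $\ZZ[\ttt_k]$ and concentrated in even degrees, all connecting maps in the long exact sequence vanish, so the sequence breaks into short exact sequences of free $\ZZ[\ttt_k]$-modules. These sequences split, giving freeness of $H^*_T(X_i;\ZZ)$ and an additive basis that augments the inductively obtained basis by one class per new cell $C_j$. The Thom class of the normal bundle of $C_j$ pushes forward to the equivariant fundamental class $[\overline{C_j}]$ under the natural map $H^*_T(X_i, X_{i-1};\ZZ) \to H^*_T(X_i;\ZZ)$, giving the claimed basis.

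The main technical obstacle is setting up the $T$-equivariant tubular neighborhood and Thom isomorphism, because the intermediate $X_i$ need not be smooth even though $X$ is. One workaround is to work throughout with the global classes $[\overline{C_j}] \in H^*_T(X;\ZZ)$ defined via the smooth total space (through Borel--Moore duality), and verify compatibility with the long exact sequence step-by-step. Alternatively, the entire argument can be deduced from the nonequivariant Lemma~\ref{lem:affine-paving-cohomology} together with the Leray--Serre spectral sequence of the Borel fibration $X \to ET \times_T X \to BT$: since the paving forces $H^*(X;\ZZ)$ to be concentrated in even degrees and free, the spectral sequence degenerates at $E_2$, yielding equivariant formality and hence a free $\ZZ[\ttt_k]$-module structure on $H^*_T(X;\ZZ)$ with the closed-cell classes as a basis.
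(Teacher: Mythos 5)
The paper does not give its own proof of this lemma; it cites \cite[Cpt. 4, Prop. 7.1]{AF}, which runs through (equivariant) Chow groups or Borel--Moore homology and the localization exact sequence for a closed--open decomposition. Your proposal therefore supplies a proof where the paper supplies a reference. Of your two routes, the second (Leray--Hirsch for the Borel fibration $X \to ET \times_T X \to BT$, using Lemma~\ref{lem:affine-paving-cohomology} to know that $H^*(X;\ZZ)$ is free and concentrated in even degree, and the fact that $[\overline{C}]_T$ restricts to $[\overline{C}]$ in a fiber) is clean, self-contained, and arguably the most efficient way to get exactly the stated basis. It sidesteps all smoothness issues of the strata, and it matches the way the paper itself uses Leray--Hirsch later (Theorem~\ref{thm:leray-hirsch}, Lemma~\ref{lem:leray-hirsch-decomposition}). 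The one small point you should nail down is the degree: each $[\overline{C}]_T$ lives in $H^{2\,\codim_X C}_T$, so the restrictions really do give a $\ZZ$-basis of $H^*(X;\ZZ)$ in each cohomological degree, which is what Leray--Hirsch needs.

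Your first route has the gap you already flag, and it is a real one: a tubular neighborhood and Thom isomorphism for $C_j$ inside $X_i$ require $X_i$ to be smooth along $C_j$, and smoothness of the ambient $X$ does not give this. The standard fix, which is what sources like Anderson--Fulton actually do, is not the "global Borel--Moore duality" workaround as you phrase it, but to replace the long exact sequence of the pair $(X_i, X_{i-1})$ in equivariant singular cohomology with the localization exact sequence in equivariant Borel--Moore homology (or equivariant Chow groups) for the closed embedding $X_{i-1} \hookrightarrow X_i$ with open complement $X_i - X_{i-1}$. That sequence needs no smoothness of the strata, the open-stratum term $H^{T,\mathrm{BM}}_*(\coprod_j \AAAA^{d_j})$ is a free $\ZZ[\ttt_k]$-module concentrated in even degree, the connecting maps vanish for the same parity reason you give, and one passes back to equivariant cohomology at the very end by Poincar\'e duality on the smooth total space $X$. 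If you want route 1 to be rigorous, I would recast it in those terms rather than via Thom classes of normal bundles inside the $X_i$; otherwise, route 2 is correct as written and I would lead with it.
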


Let $X$ be a smooth complex variety and let $\EEE \twoheadrightarrow X$ be a rank $r$ complex vector bundle over $X$. For any point $x \in X$, the fiber $\EEE_x$ of $\EEE$ over $x$ is an $r$-dimensional complex vector space. A vector bundle of rank $1$ is called a {\em line bundle}. For each $i > 0$, we have the {\em Chern class} $c_i(\EEE) \in H^{2i}(X;\ZZ)$. We have $c_i(\EEE) = 0$ whenever $i > r$. The {\em total Chern class} is the element $c(\EEE) \in H^*(X;\ZZ)$ given by 
$c(\EEE) := \sum_{i \geq 0} c_i(\EEE)$ where $c_0(\EEE) := 1$. 
The precise construction of Chern classes may be found in  \cite{AF}; for our purposes we will need the following facts.
\begin{enumerate}
    \item If $\EEE = \CC^r$ is the trivial bundle, then $c(\EEE) = 1$.
    \item  If $0 \to \EEE_1 \to \EEE_2 \to \EEE_3 \to 0$ is a short exact sequence of vector bundles, we have $$c(\EEE_2) = c(\EEE_1) \cdot c(\EEE_3).$$
\end{enumerate}
Item (2) implies that $c(\EEE \oplus \EEE') = c(\EEE) \cdot c(\EEE')$; this relation is called the {\em Whitney Sum Formula}.

Now suppose $X$ carries an action of $T$ and that $\EEE \twoheadrightarrow X$ is a $T$-equivariant complex vector bundle of rank $r$. Then $ET \times_T \EEE$ is an ordinary rank $r$ vector bundle over $ET \times_T X$. For $i > 0$ we have the {\em equivariant Chern class} $c_i^T(\EEE) \in H^{2i}_T(X;\ZZ)$ defined by $$c_i^T(\EEE) = c_i(ET \times_T \EEE) \in H^{2i}(ET \times_T X;\ZZ) = H^{2i}_T(X;\ZZ).$$ As before, we have $c_i^T(\EEE) = 0$ for $i > r$ and define $c^T(\EEE) := \sum_{i \geq 0} c_i^T(\EEE)$ where $c_0^T(\EEE) := 1$. Equivariant versions of the facts from the last paragraph read as follows.
\begin{enumerate}
    \item If $\EEE = \CC^k$ is the trivial bundle $\CC^k \times X$ of rank $k$ equipped with the natural action of $T = (\CC^*)^k$, we have $c^T(\EEE) = (1 + t_1) \cdots (1 + t_k)$.
    \item If $0 \to \EEE_1 \to \EEE_2 \to \EEE_3 \to 0$ is a short exact sequence of equivariant bundles, we have $$c^T(\EEE_2) = c^T(\EEE_1) \cdot c^T(\EEE_3).$$
\end{enumerate}
In particular, we have $c^T(\EEE \oplus \EEE') = c^T(\EEE) \cdot c^T(\EEE')$.

Let $\EEE \twoheadrightarrow X$ be an equivariant vector bundle. If $\EEE$ is an equivariant direct sum of line bundles $\EEE = \LLL_1 \oplus \cdots \oplus \LLL_r$, we have $c^T(\EEE) = (1 + x_1) \cdots (1 + x_r)$ where $x_i \in H^2_T(X;\ZZ)$ is given by $x_i = c^T_1(\LLL_i)$. Although not every vector bundle splits into a direct sum of line bundles, by choosing an appropriate filtration on $\EEE$ we can form a flag extension $X'$ of $X$ such that
\begin{itemize}
    \item we have a $T$-equivariant surjection $X' \twoheadrightarrow X$ and the induced map $H^*_T(X;\ZZ) \hookrightarrow H^*_T(X';\ZZ)$ is an embedding, and 
    \item there exist elements $x_1, \dots, x_r \in H^2_T(X';\ZZ)$ such that $c^T(\EEE) = (1 + x_1) \cdots (1 + x_r)$.
\end{itemize}
The elements $x_1, \dots, x_r \in H^2_T(X';\ZZ)$ are the {\em equivariant Chern roots} of $\EEE$. For any $i > 0$ we have $e_i(x_1, \dots, x_r) = c_i^T(\EEE) \in H^*_T(X;\ZZ)$, so any symmetric polynomial in $x_1, \dots, x_r$ lies in $H^*_T(X;\ZZ)$.

Our moduli space of study $X_{n,k,d}$ is the total space of a fiber bundle $X_{n,k,d} \to \Gr(d,\CC^k)$; see Observation~\ref{obs:fiber-bundle}. To understand the cohomology of $X_{n,k,d}$, we will use the following standard result on bundle topology; see e.g. \cite[Thm. 4D.1]{Hatcher}.

\begin{theorem}
    \label{thm:leray-hirsch}
    {\em (Leray-Hirsch)} Let $F \to E \xrightarrow{ \, \pi \,} B$ be a fiber bundle and let $R$ be a coefficient ring. Assume that
    \begin{itemize}
        \item for each $d$, the cohomology group $H^d(F;R)$ is a free $R$-module, and
        \item there exist classes $c_j \in H^{d}(E;R)$ whose restrictions $\iota^*(c_j) \in H^{d}(F;R)$ form an $R$-basis in each fiber $F$, where $\iota: F \hookrightarrow E$ is the inclusion.
    \end{itemize}
    The map $\Phi: H^*(B;R) \otimes_R H^*(F;R) \to H^*(E;R)$ given by $\sum_{i,j} b_i \otimes \iota^*(c_j) \mapsto \pi^*(b_i) \cdot c_j$ is an isomorphism of $H^*(B;R)$-modules.
\end{theorem}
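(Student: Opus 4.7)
The plan is to prove Theorem~\ref{thm:leray-hirsch} by a Mayer--Vietoris induction on open covers of the base $B$, bootstrapping from the K\"unneth theorem applied to trivial bundles. Since $\pi$ is locally trivial, it suffices to verify the conclusion over a contractible trivializing open set $U \subseteq B$ and then patch the local results together using standard homological algebra. Throughout, the freeness of each $H^d(F;R)$ is essential: it guarantees that K\"unneth is a strict isomorphism and that tensoring long exact sequences with $H^*(F;R)$ preserves exactness.

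First, I would handle the base case of a trivial bundle $E = B \times F$ with $B$ contractible (or more generally any $B$ for which K\"unneth applies). The hypothesis on $H^d(F;R)$ makes the K\"unneth formula deliver a canonical isomorphism $H^*(B;R) \otimes_R H^*(F;R) \cong H^*(B \times F;R)$. Choosing the classes $c_j \in H^*(E;R)$ to be pullbacks under the second projection $E \to F$ of any fiberwise basis identifies this K\"unneth isomorphism with the map $\Phi$.

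Second, I would run the inductive step via Mayer--Vietoris. If $B = U \cup V$ with the conclusion known over $U$, $V$, and $U \cap V$, then tensoring the Mayer--Vietoris sequence of the triple $(B, U, V)$ with the graded free $R$-module $H^*(F;R)$ gives an exact sequence, while the cohomology Mayer--Vietoris sequence applied to $(E, \pi^{-1}(U), \pi^{-1}(V))$ gives another. Naturality of $\Phi$ under pullback along open inclusions produces a commutative ladder between these two sequences, and the five lemma yields that $\Phi$ is an isomorphism over $B$. One then iterates this on a finite good cover of $B$, and for infinite-dimensional CW bases passes to a direct limit over finite subcomplexes.

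The main obstacle is confirming the naturality required to build the ladder: one must check that the global classes $c_j \in H^*(E;R)$ restrict to input data satisfying the hypotheses of the theorem over each piece $\pi^{-1}(U)$, $\pi^{-1}(V)$, $\pi^{-1}(U \cap V)$, so that the inductive hypothesis genuinely applies. That the restrictions of $c_j$ still form a fiberwise basis is immediate from the definition; the subtler point is verifying that the cup-product map $H^*(B;R) \otimes H^*(F;R) \to H^*(E;R)$ defining $\Phi$ commutes with the restriction maps and the Mayer--Vietoris connecting homomorphisms. Once this naturality is in hand, the argument reduces to a mechanical five-lemma chase.
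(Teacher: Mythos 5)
The paper does not supply its own proof of Theorem~\ref{thm:leray-hirsch}: it is the classical Leray--Hirsch theorem, cited directly from Hatcher \cite[Thm.~4D.1]{Hatcher}, so there is no in-paper argument to compare against. Your sketch is, in substance, the standard textbook proof (and in particular the one Hatcher gives): a Mayer--Vietoris induction that patches together local trivializations via a five-lemma ladder, with exactness after tensoring preserved because $H^*(F;R)$ is a free $R$-module.

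Two points in your sketch deserve a little more care before it could be called complete. First, in the base case you speak of ``choosing the classes $c_j$ to be pullbacks under the second projection''; in fact the $c_j$ are part of the given data and cannot be chosen. What actually happens when $B$ is contractible is simpler and does not need K\"unneth at all: $\iota: F \hookrightarrow E$ is a homotopy equivalence, $H^*(B;R) = R$, and $\Phi$ reduces to $1 \otimes \iota^*(c_j) \mapsto c_j$, which is the inverse of the isomorphism $\iota^*$. Second, the final step --- ``passes to a direct limit over finite subcomplexes'' --- is where the genuine technical content lies, and it is stated too casually: singular cohomology does not commute with direct limits of spaces (a $\varprojlim^1$ term can appear), so one cannot simply take a limit of the isomorphisms $\Phi_U$. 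The standard fix (as in Hatcher) is either to work with a CW structure and induct up the skeleta using the relative form of $\Phi$ on pairs $(B, B')$, or to observe that the classes in $H^*(E;R)$ are detected on compact subsets and argue there. Your outline is sound, but this last step needs to be replaced by one of those arguments to actually close the proof.
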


\section{Quotient Rings and Orbit Harmonics}
\label{sec:Quotient}

\subsection{A finite locus} The starting point for presenting the equivariant cohomology of $X_{n,k,d}$ is calculating the orbit harmonics quotient ring of a strategically chosen finite point locus. This locus carries an action of $\symm_d$, and taking $\symm_d$-invariants will yield the ordinary cohomology of $X_{n,k,d}$.

\begin{defn}
    \label{def:z-locus-specialized}
    Let $\aalpha_k = (\alpha_1, \dots, \alpha_k) \in \QQ^k$ be a list of $k$ distinct rational numbers. Define $\Zpoints_{n,k,d}(\aalpha_k) \subseteq \QQ^{n+d}$ to be the locus of points $(z_1, \dots, z_n; z_{n+1}, \dots, z_{n+d})$ such that
    \begin{itemize}
        \item every coordinate $z_i$ lies in $\{\alpha_1, \dots, \alpha_k \}$,
        \item the last coordinates $z_{n+1}, \dots, z_{n+d}$ are distinct, and
        \item we have $\{z_1, \dots, z_n\} = \{z_{n+1}, \dots, z_{n+d} \}.$
    \end{itemize}
\end{defn}

For example, if $n = 3, k = 3,$ and $d = 2$, the locus $\Zpoints_{n,k,d}(\aalpha_k) \subseteq \QQ^{3+2}$ consists of the 36 points
$$\left\{ \begin{tiny} \begin{array}{c} 
(\alpha_1, \alpha_1, \alpha_2; \alpha_1, \alpha_2), \quad (\alpha_1, \alpha_2, \alpha_1; \alpha_1, \alpha_2), \quad 
(\alpha_2, \alpha_1, \alpha_1; \alpha_1, \alpha_2), \quad (\alpha_1, \alpha_2, \alpha_2; \alpha_1, \alpha_2), \quad (\alpha_2, \alpha_1, \alpha_2; \alpha_1, \alpha_2), \quad (\alpha_2, \alpha_2, \alpha_1; \alpha_1, \alpha_2), \\
(\alpha_1, \alpha_1, \alpha_2; \alpha_2, \alpha_1), \quad (\alpha_1, \alpha_2, \alpha_1; \alpha_2, \alpha_1), \quad 
(\alpha_2, \alpha_1, \alpha_1; \alpha_2, \alpha_1), \quad (\alpha_1, \alpha_2, \alpha_2; \alpha_2, \alpha_1), \quad (\alpha_2, \alpha_1, \alpha_2; \alpha_2, \alpha_1), \quad (\alpha_2, \alpha_2, \alpha_1; \alpha_2, \alpha_1), \\
(\alpha_1, \alpha_1, \alpha_3; \alpha_1, \alpha_3), \quad (\alpha_1, \alpha_3, \alpha_1; \alpha_1, \alpha_3), \quad 
(\alpha_3, \alpha_1, \alpha_1; \alpha_1, \alpha_3), \quad (\alpha_1, \alpha_3, \alpha_3; \alpha_1, \alpha_3), \quad (\alpha_3, \alpha_1, \alpha_3; \alpha_1, \alpha_3), \quad (\alpha_3, \alpha_3, \alpha_1; \alpha_1, \alpha_3), \\
(\alpha_1, \alpha_1, \alpha_3; \alpha_3, \alpha_1), \quad (\alpha_1, \alpha_3, \alpha_1; \alpha_3, \alpha_1), \quad 
(\alpha_3, \alpha_1, \alpha_1; \alpha_3, \alpha_1), \quad (\alpha_1, \alpha_3, \alpha_3; \alpha_3, \alpha_1), \quad (\alpha_3, \alpha_1, \alpha_3; \alpha_3, \alpha_1), \quad (\alpha_3, \alpha_3, \alpha_1; \alpha_3, \alpha_1) \\
(\alpha_2, \alpha_2, \alpha_3; \alpha_2, \alpha_3), \quad (\alpha_2, \alpha_3, \alpha_2; \alpha_2, \alpha_3), \quad 
(\alpha_3, \alpha_2, \alpha_2; \alpha_2, \alpha_3), \quad (\alpha_2, \alpha_3, \alpha_3; \alpha_2, \alpha_3), \quad (\alpha_3, \alpha_2, \alpha_3; \alpha_2, \alpha_3), \quad (\alpha_3, \alpha_3, \alpha_2; \alpha_2, \alpha_3), \\
(\alpha_2, \alpha_2, \alpha_3; \alpha_3, \alpha_2), \quad (\alpha_2, \alpha_3, \alpha_2; \alpha_3, \alpha_2), \quad 
(\alpha_3, \alpha_2, \alpha_2; \alpha_3, \alpha_2), \quad (\alpha_2, \alpha_3, \alpha_3; \alpha_3, \alpha_2), \quad (\alpha_3, \alpha_2, \alpha_3; \alpha_3, \alpha_2), \quad (\alpha_3, \alpha_3, \alpha_2; \alpha_3, \alpha_2)
\end{array} \end{tiny}  \right\}$$
The points in $\Zpoints_{n,k,d}(\aalpha_k)$ may be enumerated as follows. We have $\frac{k!}{(k-d)!}$ choices for the last $d$ coordinates of a point $(z_1, \dots, z_n; z_{n+1}, \dots, z_{n+d}) \in \Zpoints_{n,k,d}(\aalpha_k)$. Once these terminal coordinates are fixed, the first $n$ coordinates correspond to a surjective map $[n] \to [d]$; the number of such maps is $d! \cdot \Stir(n,d)$. We therefore have
\begin{equation}
\label{eq:z-count}
    |\Zpoints_{n,k,d}(\aalpha_k)| = \frac{k! \cdot d!}{(k-d)!} \cdot \Stir(n,d).
\end{equation}
The locus $\Zpoints_{n,k,d}(\aalpha_k)$ carries an action of $\symm_d$ on its last $d$ coordinates. This action is free because the coordinates $z_{n+1}, \dots, z_{n+d}$ are distinct, and the number of $\symm_d$-orbits is given by
\begin{equation}
\label{eq:z-orbit-count}
    |\Zpoints_{n,k,d}(\aalpha_k)/\symm_d| = \frac{1}{d!} \cdot |\Zpoints_{n,k,d}(\aalpha_k)| = \frac{k!}{(k-d)!} \cdot \Stir(n,d).
\end{equation}

Let $\xxx_n = (x_1, \dots, x_n)$, $\yyy_d = (y_1, \dots, y_d)$ and identify the coordinate ring of $\QQ^{n+d}$ with $\QQ[\xxx_n,\yyy_d]$.  The vanishing ideal $\II(\Zpoints_{n,k,d}(\aalpha_k))$ is a subset of $\QQ[\xxx_n,\yyy_d]$. For orbit harmonics purposes, we want to calculate the associated graded ideal $\gr \, \II(\Zpoints_{n,k,d}(\aalpha_k))$. We begin by giving generators for the inhomogeneous ideal $\II(\Zpoints_{n,k,d}(\aalpha_k))$.

\begin{lemma}
    \label{lem:orbit-harmonics-Z-inhomogeneous}
    The ideal $\II(\ZZZ_{n,k,d} (\aalpha_k)) \subseteq \QQ[\xxx_n,\yyy_d]$ is generated by
    \begin{itemize}
    \item $e_r(\aalpha_k) - e_{r-1}(\aalpha_k) h_1(\yyy_d) + \cdots + (-1)^r h_r(\yyy_d)$ for all $r > k-d$, 
    \item $e_r(\xxx_n) - e_{r-1}(\xxx_n) h_1(\yyy_d) + \cdots + (-1)^r h_r(\yyy_d) = 0$ for all $r > n-d$, and
    \item $x_i^d - x_i^{d-1} e_1(\yyy_d) + \cdots + (-1)^d e_d(\yyy_d)$ for $i = 1, \dots, n$.
    \end{itemize}
\end{lemma}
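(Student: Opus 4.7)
The plan has two parts. First, I verify the forward containment $J \subseteq \II(\Zpoints_{n,k,d}(\aalpha_k))$ for the ideal $J$ generated by the three listed families. Relation (3) is immediate: at any point $(x_1,\dots,x_n;y_1,\dots,y_d)$ of $\Zpoints$, each $x_i$ equals some $y_j$, so $\prod_{j=1}^d (x_i - y_j) = 0$. For (1) and (2), I would use the generating-function identity
$$\sum_{r \geq 0} \Big(\sum_{s=0}^r (-1)^s e_{r-s}(\mathbf{u}) h_s(\yyy_d)\Big) t^r \;=\; \frac{\prod_i (1 + u_i t)}{\prod_{j=1}^d (1 + y_j t)}$$
applied with $\mathbf{u} = \aalpha_k$ for (1) and $\mathbf{u} = \xxx_n$ for (2). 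On $\Zpoints$ the $y$-coordinates are distinct elements of $\{\alpha_1,\dots,\alpha_k\}$ with $\{y_1,\dots,y_d\} = \{x_1,\dots,x_n\}$ as sets, so $\prod_j(1 + y_j t)$ divides each numerator in $\QQ[t]$; the right-hand side then becomes a polynomial of degree $k - d$ (resp.\ $n - d$), forcing the stated coefficients in higher degrees to vanish.

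For the reverse containment it suffices to establish the bound $\dim_\QQ \QQ[\xxx_n,\yyy_d]/J \leq |\Zpoints_{n,k,d}(\aalpha_k)|$, since the surjection $\QQ[\xxx_n,\yyy_d]/J \twoheadrightarrow \QQ[\Zpoints]$ would then be an isomorphism. I would exploit the fibered structure of $\Zpoints$ over its projection to the $y$-coordinates. Let $J_0 \subseteq \QQ[\yyy_d]$ be the sub-ideal generated by family (1) alone. The top-degree components of its generators are $\pm h_r(\yyy_d)$ for $r > k-d$, hence $\gr J_0 \supseteq (h_{k-d+1}(\yyy_d), \dots, h_k(\yyy_d))$. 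A direct Gr\"obner calculation with lex order $y_1 > \cdots > y_d$ shows $\dim \QQ[\yyy_d]/(h_{k-d+1},\dots,h_k) = \frac{k!}{(k-d)!}$, the standard monomials being $y_1^{a_1} \cdots y_d^{a_d}$ with $a_j < k - d + j$. Lemma~\ref{lem:graded-basis} then gives $\dim \QQ[\yyy_d]/J_0 \leq \frac{k!}{(k-d)!}$; since this matches the cardinality of the variety of $J_0$ (ordered $d$-tuples of distinct elements of $\{\alpha_1,\dots,\alpha_k\}$), the ideal $J_0$ is radical and $\QQ[\yyy_d]/J_0 \cong \prod_\beta \QQ$ splits as a product of residue fields indexed by such tuples $\beta$.

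This decomposition lifts to $\QQ[\xxx_n,\yyy_d]/J \cong \bigoplus_\beta \QQ[\xxx_n]/J_\beta$, where $J_\beta$ is obtained by substituting $\yyy_d \mapsto \beta$ in families (2) and (3). Explicitly, $J_\beta \subseteq \QQ[\xxx_n]$ is generated by $\prod_j(x_i - \beta_j)$ together with the coefficients of $t^r$ ($r > n-d$) in $\prod_i(1 + x_i t)/\prod_j(1 + \beta_j t)$, and it cuts out the set of surjections $[n] \twoheadrightarrow \{\beta_1,\dots,\beta_d\}$ of cardinality $d!\,\Stir(n,d)$. The main technical obstacle is the inner bound $\dim \QQ[\xxx_n]/J_\beta \leq d!\,\Stir(n,d)$: this is an inhomogeneous refinement of the Haglund-Rhoades-Shimozono analysis of the coinvariant ring $R_{n,d}$ \cite{HRS, PR}, and I expect it to be proved by a Gr\"obner argument that uses (3) to bound $x_i$-exponents below $d$ and (2) to bound the remaining symmetric-polynomial residues. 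Granting this, summing over the $\frac{k!}{(k-d)!}$ tuples $\beta$ yields $\dim \QQ[\xxx_n,\yyy_d]/J \leq \frac{k!\,d!}{(k-d)!}\,\Stir(n,d) = |\Zpoints_{n,k,d}(\aalpha_k)|$, completing the proof.
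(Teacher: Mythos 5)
Your forward containment is correct and matches the paper's: both establish that the three listed families vanish on $\Zpoints_{n,k,d}(\aalpha_k)$ via the generating-function identity.

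For the reverse containment, you take a genuinely different route. The paper proves the reverse direction indirectly: its proof of this lemma only verifies forward containment and that the first two families cut out $\Zpoints_{n,k,d}(\aalpha_k)$ set-theoretically; the ideal equality is really recovered later, via the joint Gr\"obner analysis in Proposition~\ref{prop:orbit-harmonics-Z-homogeneous}, which bounds $\dim_\QQ \QQ[\xxx_n,\yyy_d]/J^\QQ_{n,k,d}$ from above by exhibiting a lex Gr\"obner basis in all $n+d$ variables simultaneously. You instead decompose over the fiber of the $\yyy_d$-projection: first you show $J_0 = \bigcap_\beta \mathfrak{m}_\beta$ is radical by matching $\dim_\QQ \QQ[\yyy_d]/J_0$ to the number of ordered $d$-tuples $\beta$ of distinct $\alpha$'s, then invoke CRT to get $\QQ[\xxx_n,\yyy_d]/J \cong \bigoplus_\beta \QQ[\xxx_n]/J_\beta$, and bound each summand. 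This fiber-by-fiber decomposition is attractive because it cleanly separates the Grassmannian-type relations in $\yyy_d$ from the Fubini-word relations in $\xxx_n$, and the combinatorial accounting \eqref{eq:z-count} falls out naturally as a product.

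However, you leave the decisive step open: the bound $\dim_\QQ \QQ[\xxx_n]/J_\beta \leq d!\,\Stir(n,d)$ is asserted to be an ``inhomogeneous refinement'' of the Haglund--Rhoades--Shimozono analysis that you ``expect to be proved by a Gr\"obner argument,'' but you do not supply it, and this is precisely the crux of the reverse containment. The gap is real but fillable by the technique you gesture at: the top-degree homogeneous components of the generators of $J_\beta$ are $x_i^d$ ($i=1,\dots,n$) and $e_r(\xxx_n)$ ($r > n-d$), so $\gr J_\beta \supseteq (x_1^d,\dots,x_n^d,e_n(\xxx_n),\dots,e_{n-d+1}(\xxx_n))$; Lemma~\ref{lem:substaircase-standard-monomial} gives $\dim_\QQ$ of the quotient by the latter as $d!\,\Stir(n,d)$; and since $\dim_\QQ \QQ[\xxx_n]/J_\beta = \dim_\QQ \QQ[\xxx_n]/\gr J_\beta$, the bound follows. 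You should make this chain explicit rather than leaving it as a conjecture; as written, your reverse containment rests on an unproven hypothesis.
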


\begin{proof}
    Let $u$ be a new variable. The generators in the first bullet point vanish on $(x_1, \dots, x_n, y_1, \dots, y_d)$ if and only if the rational function
    \[ \frac{(1 + \alpha_1 u) \cdots (1 + \alpha_k u)}{(1 + y_1 u) \cdots (1 + y_d u)}\]
    is a polynomial in $u$ of degree $\leq k-d$. This happens if and only if $y_1, \dots, y_d$ are $d$ distinct elements of $\{ \alpha_1, \dots, \alpha_k \}$. Similarly, the relations in the second bullet point vanish if and only if the rational function
        \[ \frac{(1 + x_1 u) \cdots (1 + x_n u)}{(1 + y_1 u) \cdots (1 + y_d u)}\]
    is a polynomial in $u$ of degree $\leq n-d$, i.e. the coordinates $y_1, \dots, y_d$ each occur among the coordinates $x_1, \dots, x_n$. The generators in the first two bullet points already suffice to cut out $\ZZZ_{n,k,d}(\aalpha_k)$. The elements in the last bullet point are also members of $\II(\ZZZ_{n,k,d}(\aalpha_k))$ because
    \[ \frac{(1 + y_1 u) \cdots (1 + y_d u)}{(1 + x_i u)}\]
    is a polynomial in $u$ of degree $d-1$ on $\ZZZ_{n,k,d}(\aalpha_k)$.
\end{proof}

While the generators in the third bullet point of Lemma~\ref{lem:orbit-harmonics-Z-inhomogeneous} were redundant, they will be helpful in obtaining the associated graded ideal of $\II(\Zpoints_{n,k,d}(\aalpha_k))$. We introduce notation for the ideal generated by the highest degree components of the polynomials in Lemma~\ref{lem:orbit-harmonics-Z-inhomogeneous} as follows.

\begin{defn}
    \label{def:j-ideal-definition}
    Let $J_{n,k,d}^\QQ \subseteq \QQ[\xxx_n,\yyy_d]$ be the ideal generated by 
    \begin{itemize}
    \item $h_r(\yyy_d)$ for all $r > k-d$, 
    \item $e_r(\xxx_n) - e_{r-1}(\xxx_n) h_1(\yyy_d) + \cdots + (-1)^r h_r(\yyy_d) = 0$ for all $r > n-d$, and
    \item $x_i^d - x_i^{d-1} e_1(\yyy_d) + \cdots + (-1)^d e_d(\yyy_d)$ for $i = 1, \dots, n$.
    \end{itemize}
\end{defn}

The superscript in $J_{n,k,d}^\QQ$ is meant to recall the ground field of the ambient ring. Observe that the generators of $J_{n,k,d}^\QQ$ do not depend on the parameters $\aalpha_k = (\alpha_1, \dots, \alpha_k)$. Lemma~\ref{lem:orbit-harmonics-Z-inhomogeneous} implies that $J_{n,k,d}^\QQ \subseteq \gr \, \II(\Zpoints_{n,k,d}(\aalpha_k))$ for any list $\aalpha_k \in \QQ^k$ of distinct rational parameters; our next aim is to show that this containment of ideals is an equality. To this end, we introduce some combinatorics and recall some algebraic results of Haglund-Rhoades-Shimozono \cite{HRS}. 

Recall that a {\em shuffle} of two sequences $(a_1, \dots, a_r)$ and $(b_1, \dots, b_s)$ is an interleaving $(c_1, \dots, c_{r+s})$ of these sequences which preserves the relative order of the $a$'s and the $b$'s. For integers $d \leq n$, an {\em $(n,d)$-staircase} is a shuffle $(s_1, \dots, s_n)$ of the sequences $(0,1,\dots,d-1)$ and $(d-1, \dots, d-1)$ where the latter sequence has $n-d$ copies of $d-1$. For example, the $(5,3)$-staircases consist of the following shuffles of $(0,1,2)$ and $(2,2)$:
$$(0,1,2,2,2), \quad (0,2,1,2,2), \quad (2,0,1,2,2), \quad (0,2,2,1,2), \quad (2,0,2,1,2), \quad (2,2,0,1,2).$$

A length $n$ sequence $(e_1, \dots, e_n)$ of nonnegative integers is {\em $(n,d)$-substaircase} if it is componentwise $\leq$ at least one $(n,d)$-staircase. A monomial $x_1^{e_1} \cdots x_n^{e_n}$ in the variables $\xxx_n = (x_1, \dots, x_n)$ is {\em $(n,d)$-substaircase} if its exponent sequence $(e_1,\dots,e_n)$ has the corresponding property. For example, when $(n,d) = (3,2)$ we have the staircases $(0,1,1), (1,0,1)$ and the substaircase monomials
$$\{x_2 x_3, x_1 x_3, x_1 ,x_2, x_3, 1\}.$$
The number of $(n,d)$-stubstaircase monomials coincides with the number of surjections $[n] \to [d]$.

\begin{lemma}
\label{lem:substaircase-count} \cite[Lem. 4.8]{HRS}
The number of $(n,d)$-substaircase monomials is $d! \cdot \Stir(n,d)$.
\end{lemma}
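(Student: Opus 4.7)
My plan is to classify $(n,d)$-substaircase monomials by a canonical \emph{greedy witness} of increasing positions and then count directly. First I would reformulate the definition: a sequence $(e_1, \ldots, e_n) \in \{0, 1, \ldots, d-1\}^n$ is $(n,d)$-substaircase if and only if there exist positions $1 \leq p_1 < \cdots < p_d \leq n$ with $e_{p_j} \leq j-1$ for every $j$. This is immediate from reading off, from any witnessing staircase, the positions occupied by the strictly increasing sequence $(0, 1, \ldots, d-1)$.

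Next I would introduce the greedy witness of a substaircase monomial: set $p_0 := 0$ and $p_j := \min\{i > p_{j-1} : e_i \leq j-1\}$ for $j = 1, \ldots, d$. This witness is uniquely determined and exists precisely when the monomial is substaircase. For each fixed witness $(p_1, \ldots, p_d)$, the monomials realizing that witness are counted as follows: $e_{p_j}$ ranges freely over the $j$ values in $\{0, \ldots, j-1\}$; the $p_j - p_{j-1} - 1$ entries $e_i$ strictly between $p_{j-1}$ and $p_j$ must satisfy $e_i \geq j$ (otherwise the greedy would have selected one of them before $p_j$), giving $d-j$ choices each; and the $n - p_d$ entries after $p_d$ are unrestricted, giving $d$ choices each.

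Setting $k_j := p_j - p_{j-1} - 1$ for $1 \leq j \leq d$ and $k_{d+1} := n - p_d$, so that $k_1 + \cdots + k_{d+1} = n - d$ with all $k_j \geq 0$, summing over witnesses gives the total count
\begin{equation*}
    d! \sum_{\substack{k_1, \ldots, k_{d+1} \geq 0 \\ k_1 + \cdots + k_{d+1} = n-d}} (d-1)^{k_1}(d-2)^{k_2}\cdots 1^{k_{d-1}} \cdot 0^{k_d} \cdot d^{k_{d+1}}.
\end{equation*}
The factor $0^{k_d}$ forces $k_d = 0$. Relabeling $i_\ell := k_{d-\ell}$ for $\ell = 1, \ldots, d-1$ and $i_d := k_{d+1}$ turns the remaining sum into $\sum 1^{i_1} 2^{i_2} \cdots d^{i_d}$ over compositions of $n-d$ into $d$ nonnegative parts, which equals $\Stir(n,d)$ by the classical generating function $\sum_{n \geq d} \Stir(n,d)\, x^n = x^d / \prod_{j=1}^d (1-jx)$. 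This yields the desired count $d! \cdot \Stir(n,d)$.

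The main obstacle is making the greedy witness analysis airtight: one must check that every substaircase monomial has a unique greedy witness, and that the in-between positions indeed contribute exactly $d-j$ free choices. Once this bookkeeping is nailed down, everything else is mechanical manipulation of the resulting sum.
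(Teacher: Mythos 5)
Your proof is correct, and it fills in a detail the paper does not: the paper states Lemma~\ref{lem:substaircase-count} as a citation to \cite[Lem. 4.8]{HRS} and gives no argument of its own, so there is no in-paper proof to compare against. Your argument is a valid standalone proof.

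A few remarks on the argument's anatomy. The reformulation (substaircase iff all $e_i \leq d-1$ and there exist $p_1 < \cdots < p_d$ with $e_{p_j} \leq j-1$) is exactly right, since a staircase is determined by the positions of the strictly increasing part. The greedy claim — that the greedy witness exists iff \emph{some} witness exists, and that the set of monomials with a fixed greedy witness $(p_1,\ldots,p_d)$ is cut out by the constraints $e_{p_j} \in \{0,\ldots,j-1\}$, $e_i \in \{j,\ldots,d-1\}$ for $p_{j-1}<i<p_j$, and $e_i$ free after $p_d$ — both follow from the standard inductive observation that the greedy choice minorizes any other witness, so these are not hard to make airtight. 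The factor $0^{k_d}$ correctly forces $p_d = p_{d-1}+1$ (nothing can sit strictly between them, since it would need to be $\geq d$), and the final identification of $\sum 1^{i_1}2^{i_2}\cdots d^{i_d}$ with $\Stir(n,d)$ via the rational generating function $x^d/\prod_{j=1}^d(1-jx)$ is classical. The approach in \cite{HRS} is instead a direct bijection between substaircase monomials and surjections $[n]\twoheadrightarrow[d]$ (reading off a surjection from the exponent sequence), which is bijectively cleaner but requires building a specific correspondence; your generating-function route trades that for an elementary greedy decomposition plus a known series identity, and has the modest advantage of not needing to exhibit an explicit bijection.
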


In fact, it is shown in \cite{HRS} that the $(n,d)$-substaircase monomials form the standard monomial basis of an orbit harmonics quotient ring $R_{n,d}$ coming from a locus in bijective correspondence with ordered set partitions of $\{1,\dots,n\}$ with $d$ blocks. The generating set of the defining ideal of $R_{n,d}$ is as in the following result.

\begin{lemma}
    \label{lem:substaircase-standard-monomial} \cite[Lem. 3.5, Lem. 4.8, Thm. 4.14]{HRS}
    Fix positive integers $d \leq n$ and endow the monomials in $\QQ[\xxx_n]$ with the lexicographical term order. The family of $(n,d)$-substaircase monomials is the standard monomial basis for the quotient ring
    $$\QQ[\xxx_n] / (x_1^d, \dots, x_n^d, e_n(\xxx_n), e_{n-1}(\xxx_n), \dots, e_{n-d+1}(\xxx_n)).$$
\end{lemma}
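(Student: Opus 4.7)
My plan is to combine orbit harmonics (to pin down the dimension of the quotient) with a lex-reduction argument (to identify the substaircase monomials with the standard monomial basis); write $J$ for the ideal in question. First I would fix distinct rationals $\alpha_1, \dots, \alpha_d$ and consider the locus $Y \subseteq \QQ^n$ of tuples $(z_1, \dots, z_n)$ with $\{z_1, \dots, z_n\} = \{\alpha_1, \dots, \alpha_d\}$. Such tuples correspond bijectively to surjections $[n] \twoheadrightarrow [d]$, so $|Y| = d! \cdot \Stir(n,d)$, matching the substaircase count of Lemma~\ref{lem:substaircase-count}. Adapting the rational-function argument from the proof of Lemma~\ref{lem:orbit-harmonics-Z-inhomogeneous} (with no $y$-variables and $k$ replaced by $d$), I would show that $\II(Y)$ is generated by $\prod_{j=1}^d (x_i - \alpha_j)$ for $1 \leq i \leq n$ together with $\sum_{s=0}^{r} (-1)^s e_{r-s}(\xxx_n) h_s(\aalpha_d)$ for $r > n-d$; the top-degree components of these generators are precisely $x_i^d$ and $e_r(\xxx_n)$. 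This gives $J \subseteq \gr \, \II(Y)$, and by orbit harmonics $\dim_\QQ \QQ[\xxx_n]/\gr \, \II(Y) = |Y| = d! \cdot \Stir(n,d)$, whence $\dim_\QQ \QQ[\xxx_n]/J \geq d! \cdot \Stir(n,d)$.

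Next I would prove the reverse inequality by showing that the substaircase monomials span $\QQ[\xxx_n]/J$. Proceed by lex-induction on monomials $x^\eee = x_1^{e_1}\cdots x_n^{e_n}$: if $e_i \geq d$ for some $i$, reduce directly using $x_i^d \in J$; if all $e_i \leq d-1$ but $\eee$ is non-substaircase, exhibit an element $f_\eee \in J$ with $\initial_\prec(f_\eee) = x^\eee$, so that $x^\eee$ is congruent modulo $J$ to a lex-lower sum. Combined with the dimension bound, this forces $\dim_\QQ \QQ[\xxx_n]/J = d! \cdot \Stir(n,d)$, so the substaircase monomials form a basis. Because each reduction step strictly lowers the lex order, every non-substaircase monomial is the lex-leading term of some element of $J$; conversely no substaircase monomial can be such a leading term, for otherwise linear independence of the substaircase basis would fail. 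Hence $\initial_\prec(J)$ is generated precisely by the minimal non-substaircase monomials, identifying the substaircase monomials as the standard monomial basis for the lex term order.

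The main obstacle is constructing $f_\eee$ in the spanning step. The lex-leading terms of the listed generators of $J$ are $x_i^d$ and $x_1 x_2 \cdots x_r$ for $r > n-d$, and these do not capture every minimal non-substaircase monomial: for instance, when $(n,d)=(5,3)$ the monomial $x_1 x_2 x_4^2$ is minimal non-substaircase yet is divisible by neither. To produce $f_\eee$ one must run an S-polynomial/syzygy argument; roughly, when the non-substaircase obstruction lies ``deep'' in the sequence, one multiplies $e_r(\xxx_n)$ by an auxiliary monomial and cancels the unwanted higher-order terms against shifted copies of other $e_s(\xxx_n)$ generators, driving the leading term down to $x^\eee$. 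The combinatorial bookkeeping for this systematic reduction, indexed by how $\eee$ fails to be dominated by any $(n,d)$-staircase, is the technical heart of the argument.
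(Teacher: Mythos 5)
The paper does not supply its own proof of this lemma; it is cited directly from \cite{HRS} (specifically Lemmas 3.5, 4.8, and Theorem 4.14 there). Your overall blueprint matches what HRS do: obtain the lower bound $\dim_\QQ \QQ[\xxx_n]/J \geq d!\cdot\Stir(n,d)$ from an orbit harmonics locus (in HRS this is the locus of ordered set partitions, essentially your $Y$), then close the gap by exhibiting spanning, and finally identify the standard monomial basis by a counting argument. Your bookkeeping at the end is correct: once substaircase monomials span and their count equals the dimension, they are a basis; since every non-substaircase monomial is a leading term of something in $J$, the standard monomials (which always form a basis) are contained in the substaircase monomials, and equality of cardinalities forces the two sets to coincide.

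The problem is that you have not actually proved the spanning step, and it is not a small gap. You correctly observe that the obvious generators $x_i^d$ and $e_r(\xxx_n)$, $r > n-d$, do not have leading terms covering the minimal non-substaircase monomials (your $x_1 x_2 x_4^2$ for $(n,d)=(5,3)$ is a good witness), so one must construct auxiliary elements $f_{\eee} \in J$ with $\initial_\prec(f_{\eee}) = x^{\eee}$. You wave at an ``S-polynomial/syzygy argument'' but do not carry it out, and you yourself flag this as ``the technical heart of the argument.'' Indeed it is: in \cite{HRS} this is handled by constructing an explicit lexicographical Gr\"obner basis whose non-monomial elements are (up to sign) \emph{Demazure characters} of the form $\sum_{r=n-d+1}^n g_r(\xxx_n)\,e_r(\xxx_n)$ with the $g_r$ themselves Demazure characters, and verifying via a nontrivial combinatorial recursion that their leading terms are precisely the minimal non-substaircase monomials. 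Reproducing this takes real work (HRS devote a substantial portion of their Section 4 to it). As written, your proposal is a valid proof strategy with the right high-level structure, but it is incomplete until the construction of the $f_{\eee}$ is made explicit and their leading terms are computed.
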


Finally, for the purposes of presenting integral (rather than merely rational) cohomology, it will be useful to have the following integrality statement related to Lemma~\ref{lem:substaircase-standard-monomial}. 

\begin{lemma}
    \label{lem:integral-substaircase} \cite[Lem. 3.4, Thm. 4.14]{HRS}
    Fix positive integers $d \leq n$. The ideal $$(x_1^d, \dots, x_n^d, e_n(\xxx_n), e_{n-1}(\xxx_n), \dots, e_{n-d+1}(\xxx_n)) \subseteq \QQ[\xxx_n]$$ appearing in Lemma~\ref{lem:substaircase-standard-monomial} has a lexicographical Gr\"obner basis $G$ consisting of the variable powers $x_1^d, \dots, x_n^d$ together with a family of polynomials of the form 
    $$\sum_{r \, = \, n-d+1}^n g_r(\xxx_n) e_r(\xxx_n)$$
    where $g_r(\xxx_n) \in \ZZ[\xxx_n]$ are homogeneous and have integer coefficients.
\end{lemma}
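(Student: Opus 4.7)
The plan is to run Buchberger's algorithm on the generating set
\[ G_0 := \{x_1^d, \ldots, x_n^d\} \cup \{e_{n-d+1}(\xxx_n), \ldots, e_n(\xxx_n)\} \]
of $I_{n,d} := (x_1^d, \ldots, x_n^d, e_{n-d+1}, \ldots, e_n)$ under the lexicographical term order with $x_1 \succ \cdots \succ x_n$. The critical observation is that every element of $G_0$ is \emph{monic} with integer coefficients and homogeneous: $\initial_\prec(x_i^d) = x_i^d$ and, since $e_r(\xxx_n) = \sum_{i_1 < \cdots < i_r} x_{i_1} \cdots x_{i_r}$, its lex-largest term is $x_1 x_2 \cdots x_r$, each with coefficient $1$.

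Because every leading coefficient equals $1$, each S-polynomial $S(f,g) = (\operatorname{lcm}/\initial_\prec f)\cdot f - (\operatorname{lcm}/\initial_\prec g)\cdot g$ is formed by monomial multiplication and integer subtraction, and polynomial reduction against a monic integer-coefficient polynomial is simply subtraction of monomial multiples. No rational denominators are ever introduced, so by induction on the Buchberger steps every intermediate polynomial is homogeneous with integer coefficients. The resulting Gr\"obner basis $G$ therefore consists of homogeneous integer-coefficient polynomials, each an iterated $\ZZ[\xxx_n]$-combination of elements of $G_0$, hence admitting an a priori expression $\sum_{k=1}^n f_k(\xxx_n) x_k^d + \sum_{r=n-d+1}^n g_r(\xxx_n) e_r(\xxx_n)$ with $f_k, g_r \in \ZZ[\xxx_n]$ homogeneous.

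To put $G$ into the shape stated in the lemma, note that $x_1^d, \ldots, x_n^d$ are already in $G$. For any other $h \in G$, its initial term $\initial_\prec(h)$ is a minimal generator of $\initial_\prec(I_{n,d})$ that is \emph{not} divisible by any $x_i^d$: otherwise it would be a proper multiple of the generator $x_i^d \in \initial_\prec(I_{n,d})$, contradicting minimality. Consequently, no monomial appearing in any $f_k(\xxx_n) x_k^d$ can equal $\initial_\prec(h)$, so subtracting $\sum_k f_k(\xxx_n) x_k^d$ from $h$ produces an element of the subideal $(e_{n-d+1}, \ldots, e_n)$ with unchanged leading term and integer coefficients, of the required form $\sum_r g_r(\xxx_n) e_r(\xxx_n)$.

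The main obstacle lies in this last subtraction: I need to ensure that the leading term is genuinely preserved, i.e., that $\initial_\prec(\sum_k f_k x_k^d) \preceq \initial_\prec(h)$, not merely that the coefficient of $\initial_\prec(h)$ in $\sum_k f_k x_k^d$ vanishes. A cleaner workaround is to organize the Buchberger computation inside the quotient $\ZZ[\xxx_n]/(x_1^d, \ldots, x_n^d)$ on the images of $e_{n-d+1}, \ldots, e_n$: this is legitimate because the relations $x_k^d$ are monic, so reduction by them preserves integer coefficients. Obtain an integer Gr\"obner basis of the image ideal, canonically lift each element to a homogeneous integer-coefficient polynomial of the form $\sum_r g_r(\xxx_n) e_r(\xxx_n) \in \ZZ[\xxx_n]$, and join with $\{x_1^d, \ldots, x_n^d\}$; the resulting family has initial terms generating $\initial_\prec(I_{n,d})$ (using Lemma~\ref{lem:substaircase-standard-monomial} to identify the correct initial ideal) and therefore forms the desired Gr\"obner basis.
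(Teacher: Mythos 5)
The paper does not prove this lemma: it cites \cite[Lem.\ 3.4, Thm.\ 4.14]{HRS}, whose proof is constructive --- the non-variable-power Gr\"obner basis elements are explicit Demazure characters, each of which is shown by direct computation to equal $\sum_r g_r(\xxx_n) e_r(\xxx_n)$ with $g_r \in \ZZ[\xxx_n]$ and to have the correct lex leading monomial. Your approach is of a quite different, algorithmic flavor, and it has a genuine gap precisely at the place you flag.

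Your first observation is sound: since every element of $G_0$ is monic with integer coefficients, every S-polynomial and every reduction step introduces no denominators, so Buchberger produces a Gr\"obner basis over $\ZZ$, and each basis element $h$ admits an expression $h = \sum_k f_k x_k^d + \sum_r g_r e_r$ with $f_k, g_r \in \ZZ[\xxx_n]$ homogeneous. The difficulty is exactly the one you identify: to replace $h$ by $\sum_r g_r e_r$ you need $\initial_\prec\bigl(\sum_k f_k x_k^d\bigr) \prec \initial_\prec(h)$, not merely that $\initial_\prec(h)$ does not occur among the monomials of $\sum_k f_k x_k^d$. Nothing in the Buchberger run enforces this; a priori the $x_k^d$-part and the $e_r$-part can each have a lex-leading monomial strictly larger than $\initial_\prec(h)$ that cancel against one another, so subtracting $\sum_k f_k x_k^d$ can raise the leading term to something divisible by some $x_i^d$, and then $\{x_1^d,\dots,x_n^d\}$ together with the modified elements may fail to have initial terms generating $\initial_\prec(I_{n,d})$.

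The proposed ``cleaner workaround'' has the same gap in disguise. Suppose you obtain, inside $R := \ZZ[\xxx_n]/(x_1^d,\dots,x_n^d)$, an element $\bar h = \sum_r \bar g_r \bar e_r$ whose leading basis monomial (in the $R$-basis of exponents $< d$ in each variable) is some $m_j$ not divisible by any $x_i^d$. Lifting each $\bar g_r$ to its normal-form representative $g_r \in \ZZ[\xxx_n]$ gives $\sum_r g_r e_r = \mathrm{NF}(\sum_r g_r e_r) + k$ with $k \in (x_1^d,\dots,x_n^d)$, and $\initial_\prec\bigl(\sum_r g_r e_r\bigr) = \max\bigl(m_j, \initial_\prec(k)\bigr)$ since $\mathrm{NF}$ and $k$ have disjoint monomial support. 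If $\initial_\prec(k) \succ m_j$, the lifted polynomial's leading term is a multiple of some $x_i^d$, not $m_j$, and the resulting family misses the minimal generator $m_j$ of $\initial_\prec(I_{n,d})$ --- so it is \emph{not} a Gr\"obner basis. You assert in the last sentence that the lifted family has initial terms generating $\initial_\prec(I_{n,d})$, but that assertion is exactly what needs proof, and it does not follow from Lemma~\ref{lem:substaircase-standard-monomial} (which only identifies the initial ideal, not which representatives in the subideal $(e_{n-d+1},\dots,e_n)$ realize its generators). Additionally, ``Gr\"obner basis in a monomial quotient ring'' is nonstandard and would itself require a careful setup (monomials in $R$ do not form a multiplicative monoid; products can vanish). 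The explicit Demazure-character construction of \cite{HRS} sidesteps all of this by producing the $g_r$ and then directly reading off the leading terms.
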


The precise form of the polynomials $g_r(\xxx_n)$ and the Gr\"obner basis elements $\sum_{r \, = \, n-d+1}^n g_r(\xxx_n) e_r(\xxx_n)$ appearing in Lemma~\ref{lem:integral-substaircase} are not important for us. In fact, the sums $\sum_{r \, = \, n-d+1}^n g_r(\xxx_n) e_r(\xxx_n)$ are certain {\em Demazure characters} (or {\em key polynomials}) and the coefficients $g_r(\xxx_n)$ are also Demazure characters (up to sign). The crucial fact for our purposes is that the polynomials $g_r(\xxx_n)$ and $\sum_{r \, = \, n-d+1}^n g_r(\xxx_n) e_r(\xxx_n)$ have integer coefficients. A more basic integrality result is as follows.

\begin{lemma}
    \label{lem:h-integrality}
    Let $\yyy = (y_1, \dots, y_d)$ and let $a \geq 0$. Consider the ideal $I \subseteq \ZZ[\yyy_d]$ generated by $h_d(\yyy_d)$ for all $d > a$. For all $1 \leq i \leq n$ we have $h_{a+i}(y_i, y_{i+1}, \dots, y_d) \in I$.
\end{lemma}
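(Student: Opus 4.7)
The plan is to prove a slightly stronger statement by induction on $i$, namely:
\begin{equation*}
h_m(y_i, y_{i+1}, \dots, y_d) \in I \qquad \text{for every } m \geq a+i.
\end{equation*}
The case $i=1$ is immediate: for $m \geq a+1$ we have $m > a$, so $h_m(\yyy_d)$ is one of the given generators of $I$.

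For the inductive step, the key tool is the standard recurrence for complete homogeneous symmetric polynomials obtained by splitting off the first variable,
\begin{equation*}
h_m(y_i, y_{i+1}, \dots, y_d) \;=\; h_m(y_{i+1}, \dots, y_d) \;+\; y_i \, h_{m-1}(y_i, y_{i+1}, \dots, y_d),
\end{equation*}
which rearranges to
\begin{equation*}
h_m(y_{i+1}, \dots, y_d) \;=\; h_m(y_i, y_{i+1}, \dots, y_d) \;-\; y_i \, h_{m-1}(y_i, y_{i+1}, \dots, y_d).
\end{equation*}
Assuming the stronger statement for $i$, both $h_m(y_i,\dots,y_d)$ and $h_{m-1}(y_i,\dots,y_d)$ lie in $I$ whenever $m \geq a+i+1$, since then $m-1 \geq a+i$ as well. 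Hence $h_m(y_{i+1},\dots,y_d) \in I$ for all $m \geq a+(i+1)$, completing the induction. Specializing $m = a+i$ in the conclusion recovers the claimed statement $h_{a+i}(y_i,\dots,y_d) \in I$.

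I do not expect any serious obstacle: the whole argument rests on the one-variable recurrence and an integer linear combination of known generators, so everything stays in $\ZZ[\yyy_d]$ and no denominators are introduced. The only thing to be careful about is strengthening the induction hypothesis from the single value $m = a+i$ to the range $m \geq a+i$, since the recurrence forces us to also control $h_{m-1}$ at the same truncation, and this requires hypotheses on more than one degree simultaneously.
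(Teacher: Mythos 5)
Your proof is correct and uses essentially the same approach as the paper: strengthen the induction hypothesis from the single degree $a+i$ to all degrees $m \geq a+i$, then apply the one-variable recurrence $h_m(y_{i+1},\dots,y_d) = h_m(y_i,\dots,y_d) - y_i\,h_{m-1}(y_i,\dots,y_d)$ to descend from $i$ to $i+1$. (The paper phrases the strengthened claim slightly differently — with a typo that should read $d \geq a$ rather than $d > a$ — but the argument is identical.)
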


We include the standard proof of Lemma~\ref{lem:h-integrality}.

\begin{proof}
    In fact, we claim that $h_{d+i}(y_i, y_{i+1}, \dots, y_d) \in I$ for all $1 \leq i \leq n$ and all $d > a$. If $i = 1$ this is true by the definition of $I$. If $d > a$ and $i > 1$ we have
    \begin{equation}
       h_{d+i}(y_i, \dots, y_n) = h_{d+i}(y_{i-1}, y_i, \dots, y_n) - y_{i-1} h_{d+i-1}(y_{i-1},y_i, \dots, y_n).
    \end{equation}
    By induction on $i$ and since $d > a$, we have $h_{d+i}(y_{i-1}, y_i, \dots, y_n), h_{d+i-1}(y_{i-1},y_i, \dots, y_n) \in I$ and the lemma follows.
\end{proof}

We are ready to give generators for the orbit harmonics ideal of the locus $\Zpoints_{n,k,d}(\aalpha_k)$ and describe the standard monomial basis of the relevant quotient ring. This will turn out to be the following set of monomials.

\begin{defn}
    \label{defn:a-monomials}
    For positive integers $n \geq k \geq d$, let $\AAA_{n,k,d}$ be the following set of monomials in $\QQ[\xxx_n,\yyy_d]$:
    \begin{equation}
    \AAA_{n,k,d} := \{ x_1^{a_1} \cdots x_n^{a_n} \cdot y_1^{b_1} \cdots y_d^{b_d} \,:\, (a_1, \dots, a_n) \text{ is $(n,d)$-substaircase and } b_i < k-d+i \}.
    \end{equation}
\end{defn}

The monomials in $\AAA_{n,k,d}$ may be enumerated as follows. The number of choices for the exponent sequence $(a_1,\dots,a_n)$ of the $x$-variables equals the number of $(n,d)$-substaircase sequences. We have $\frac{k!}{(k-d)!}$ choices for the exponent sequence $(b_1,\dots,b_d)$ of the $y$-variables. By Lemma~\ref{lem:substaircase-count} we see that
\begin{multline} \label{eq:a-count}
|\AAA_{n,k,d}| = \text{(number of $(n,d)$-substaircase sequences)} \times \frac{k!}{(k-d)!} \\ = \frac{k! \cdot d!}{(k-d)!} \cdot |\Stir(n,d)| = |\Zpoints_{n,k,d}(\aalpha_k)|\end{multline}
where the last equality used Equation~\eqref{eq:z-count}.

\begin{proposition}
    \label{prop:orbit-harmonics-Z-homogeneous}
    Let $\aalpha_k \in \QQ^k$ be a list of $k$ distinct rational numbers. We have the equality of ideals $\gr \, \II(\Zpoints_{n,k,d}(\aalpha_k)) = J_{n,k,d}^\QQ$ in $\QQ[\xxx_n,\yyy_d]$ and an isomorphism
    \begin{equation}
        \QQ[\Zpoints_{n,k,d}(\aalpha_k)] = \QQ[\xxx_n,\yyy_d]/\II(\Zpoints_{n,k,d}(\aalpha_k)) \cong \QQ[\xxx_n,\yyy_d]/\gr \, \II(\Zpoints_{n,k,d}(\aalpha_k))
    \end{equation}
    of $\symm_d$-modules. The set $\AAA_{n,k,d}$ is the standard monomial basis of $\QQ[\xxx_n,\yyy_d]/\gr \, \II(\Zpoints_{n,k,d}(\aalpha_k))$ with respect to the lexicographical order $\prec$ with the variable order $x_1 > \cdots > x_n > y_1 > \cdots > y_d$.
\end{proposition}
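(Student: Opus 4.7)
The plan is to establish the chain of (in)equalities
\[ |\AAA_{n,k,d}| \;\geq\; \dim_\QQ \QQ[\xxx_n,\yyy_d]/J_{n,k,d}^\QQ \;\geq\; \dim_\QQ \QQ[\xxx_n,\yyy_d]/\gr\,\II(\Zpoints_{n,k,d}(\aalpha_k)) \;=\; |\Zpoints_{n,k,d}(\aalpha_k)| \;=\; |\AAA_{n,k,d}|, \]
forcing all entries to coincide. The second inequality is immediate from Lemma~\ref{lem:orbit-harmonics-Z-inhomogeneous}, since the listed generators of $J_{n,k,d}^\QQ$ are exactly the top homogeneous parts of the corresponding listed generators of $\II(\Zpoints_{n,k,d}(\aalpha_k))$, so $J_{n,k,d}^\QQ \subseteq \gr\,\II(\Zpoints_{n,k,d}(\aalpha_k))$. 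The penultimate equality is the orbit harmonics isomorphism together with $\dim_\QQ \QQ[\Zpoints] = |\Zpoints|$ for a finite locus, and the last equality is \eqref{eq:a-count}.

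The heart of the matter is the first inequality, which I would prove by exhibiting a monomial ideal $K \subseteq \initial_\prec(J_{n,k,d}^\QQ)$ whose complement in the set of all monomials of $\QQ[\xxx_n,\yyy_d]$ is precisely $\AAA_{n,k,d}$. Throughout, $\prec$ denotes the lexicographical order with $x_1 > \cdots > x_n > y_1 > \cdots > y_d$. For the $\yyy_d$-direction, applying Lemma~\ref{lem:h-integrality} with $a = k-d$ places $h_{k-d+i}(y_i, \dots, y_d) \in J_{n,k,d}^\QQ$ for each $1 \leq i \leq d$, contributing $y_i^{k-d+i}$ to $\initial_\prec(J_{n,k,d}^\QQ)$. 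For the $\xxx_n$-direction, Lemma~\ref{lem:integral-substaircase} produces a lex Gr\"obner basis of the pure $\xxx_n$-ideal $(x_1^d,\dots,x_n^d,e_{n-d+1}(\xxx_n),\dots,e_n(\xxx_n))$ consisting of $x_1^d,\dots,x_n^d$ and combinations $G = \sum_{r = n-d+1}^n g_r(\xxx_n)\,e_r(\xxx_n)$; I would lift each such $G$ to an element $\tilde G \in J_{n,k,d}^\QQ$ by substituting the $J$-generator $e_r(\xxx_n) - e_{r-1}(\xxx_n)h_1(\yyy_d) + \cdots + (-1)^r h_r(\yyy_d)$ for each $e_r(\xxx_n)$, and analogously lift $x_i^d$ to the $J$-generator $f_i = x_i^d - x_i^{d-1}e_1(\yyy_d) + \cdots + (-1)^d e_d(\yyy_d)$. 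Taking $K$ to be the monomial ideal generated by the resulting leading monomials together with $\{y_i^{k-d+i}\}$, Lemma~\ref{lem:substaircase-standard-monomial} identifies the complement of $K$ with exactly $\AAA_{n,k,d}$, yielding $\dim_\QQ \QQ[\xxx_n,\yyy_d]/J_{n,k,d}^\QQ \leq |\AAA_{n,k,d}|$.

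The main obstacle is verifying that the lifts $\tilde G$ and $f_i$ have the same $\prec$-leading monomial as the unlifted originals. Under the chosen variable order, every extra term introduced during lifting carries a factor $h_j(\yyy_d)$ with $j \geq 1$; I would argue by direct exponent-vector comparison that trading $j$ units of $\xxx_n$-degree for a $\yyy_d$-factor strictly lowers the lex monomial, because the first $\xxx_n$-coordinate at which the exponents differ strictly decreases. Once the ideal equality $J_{n,k,d}^\QQ = \gr\,\II(\Zpoints_{n,k,d}(\aalpha_k))$ is in hand, the displayed vector space isomorphism is automatically an isomorphism of $\symm_d$-modules by the standard orbit harmonics machinery, since $\Zpoints_{n,k,d}(\aalpha_k)$ is $\symm_d$-stable, and $\AAA_{n,k,d}$ is the standard monomial basis because the dimension squeeze forces $K = \initial_\prec(J_{n,k,d}^\QQ)$.
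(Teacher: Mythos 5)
Your proposal is correct and mirrors the paper's own argument: both establish $J_{n,k,d}^\QQ \subseteq \gr\,\II(\Zpoints_{n,k,d}(\aalpha_k))$ from Lemma~\ref{lem:orbit-harmonics-Z-inhomogeneous}, use Lemma~\ref{lem:h-integrality} and the Gr\"obner lift of Lemma~\ref{lem:integral-substaircase} to bound the standard monomials of $\QQ[\xxx_n,\yyy_d]/J_{n,k,d}^\QQ$ inside $\AAA_{n,k,d}$, and then close the loop with the orbit harmonics dimension count $|\AAA_{n,k,d}| = |\Zpoints_{n,k,d}(\aalpha_k)|$. The only place to be slightly careful is your justification that the lift preserves the lex leading monomial: the cleanest argument is that any monomial of $\tilde G - G$ has the form $\nu_x\nu_y$ with $\nu_x$ a monomial of $g_r e_{r-b}$, so $\nu_x$ extends by $b$ further $x$-variables to a monomial of $g_r e_r \subseteq G$, which both dominates $\nu_x\nu_y$ (since $x$-variables beat $y$-variables lexicographically) and is dominated by $\initial_\prec(G)$; this is what the paper is implicitly invoking with its parenthetical remark.
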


\begin{proof}
    The module isomorphism follows from the ideal equality.
    Lemma~\ref{lem:orbit-harmonics-Z-inhomogeneous} gives the containment of ideals $J_{n,k,d}^\QQ \subseteq \gr \, \II(\ZZZ_{n,k,d}(\aalpha_k))$. We will show that 
    \begin{multline} 
    \dim_\QQ \QQ[\xxx_n,\yyy_d]/J_{n,k,d}^\QQ \leq \dim_\QQ \QQ[\xxx_n,\yyy_d]/ \gr \, \II(\Zpoints_{n,k,d}(\aalpha_k)) 
 \\ = |\ZZZ_{n,k,d}(\aalpha_k)| =  \frac{k! \cdot d!}{(k-d)!} \cdot \Stir(n,d)
    \end{multline} 
    by establishing that the set $\AAA_{n,k,d}$ contains the $\prec$-standard monomial basis of $\QQ[\xxx_n,\yyy_d]/J_{n,k,d}^\QQ$.
    
    By Lemma~\ref{lem:h-integrality}, the polynomial $h_{k-d+i}(y_i, y_{i+1}, \dots, y_d)$ lies in the ideal generated by $h_r(\yyy_d)$ for $r > k-d$, so that in particular
    $$h_{k-d+i}(y_i, y_{i+1}, \dots, y_d) \in J_{n,k,d}^\QQ \quad \text{for $1 \leq i \leq d$.}$$
    Since
    $\initial_\prec h_{k-d+i}(y_i, y_{i+1}, \dots, y_d) = y_i^{k-d+i}$ we have $y_i^{k-d+i} \in \initial_\prec(J_{n,k,d}^\QQ)$ for $1 \leq i \leq d$.
    For any element  
    \begin{equation}x_i^d \quad \text{or} \quad \sum_{r \, = \, n-d+1}^n g_r(\xxx_n) e_r(\xxx_n)\end{equation}
    of the Gr\"obner basis in Lemma~\ref{lem:integral-substaircase}, we have a corresponding element 
    \begin{equation}
        x_i^d - x_i^{d-1} e_1(\yyy_d) + \cdots + (-1)^d e_d(\yyy_d) \quad \text{or} \quad 
    \sum_{r \, = \, n-d+1}^n g_r(\xxx_n) \cdot \left( \sum_{a+b \, = \, r} (-1)^b e_a(\xxx_n) h_b(\yyy_d) \right)
    \end{equation}
    of $J_{n,k,d}^\QQ$ which has the same $\prec$-leading monomial (here we use the fact that $\prec$ weights $x$-variables more heavily than $y$-variables).
    By Lemmas~\ref{lem:substaircase-standard-monomial} and \ref{lem:integral-substaircase}, the standard monomial basis of $\QQ[\xxx_n,\yyy_d]/J_{n,k,d}^\QQ$ is a subset $\AAA_{n,k,d}$.
    Equation~\eqref{eq:a-count} implies that
    \begin{multline}
        \dim_\QQ \QQ[\xxx_n,\yyy_d]/J_{n,k,d}^\QQ \leq |\AAA_{n,k,d}| = |\ZZZ_{n,k,d}(\aalpha_{n,k,d})| \\ = \dim_\QQ \QQ[\xxx_n,\yyy_d]/\gr \, \II(\Zpoints_{n,k,d}(\aalpha_k)) \leq \dim_\QQ \QQ[\xxx_n,\yyy_d]/J_{n,k,d}^\QQ.
    \end{multline}
    It follows that $\AAA_{n,k,d}$ is precisely the standard monomial basis of $\QQ[\xxx_n,\yyy_d]/J_{n,k,d}^\QQ$ and that we have the required  equality $\gr \, \II(\Zpoints_{n,k,d}(\aalpha_k)) = J_{n,k,d}^\QQ$ of ideals in $\QQ[\xxx_n,\yyy_d]$. 
\end{proof}

In geometric terms, the $y$-variables will ultimately correspond to Chern roots of the tautological bundle $\UUU_d$ over $\Gr(d,\CC^k)$. To work with Chern classes, we  need to take $\symm_d$-invariants. Let $(-)^\natural$ be the {\em Reynolds operator}
\begin{equation}
    f^\natural := \frac{1}{d!} \sum_{w \in \symm_d} w \cdot f
\end{equation}
on any $\symm_d$-module defined over $\QQ$. The following basic lemma relates ideals in algebras with an $\symm_d$-action and ideals in their invariant subalgebras. 

\begin{lemma}
    \label{lem:rational-invariant-generation}
    Let $R$ be a $\QQ$-algebra which carries an action of $\symm_d$ and let $R^{\symm_d} \subseteq R$ be the $\symm_d$-invariant subalgebra. Let $G \subseteq R^{\symm_d}$ be a set of $\symm_d$-invariant elements. We have the equality
    $$ R \cdot G \cap R^{\symm_d} = R^{\symm_d} \cdot G $$
    of ideals of the invariant ring $R^{\symm_d}$.
\end{lemma}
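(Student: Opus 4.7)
The plan is to prove the two containments of ideals separately, with the nontrivial direction handled by the Reynolds operator.

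The containment $R^{\symm_d} \cdot G \subseteq R \cdot G \cap R^{\symm_d}$ is essentially tautological. Indeed, since $R^{\symm_d} \subseteq R$ we immediately get $R^{\symm_d} \cdot G \subseteq R \cdot G$. On the other hand, because $G$ consists of $\symm_d$-invariant elements, any $R^{\symm_d}$-linear combination of elements of $G$ is again $\symm_d$-invariant, so $R^{\symm_d} \cdot G \subseteq R^{\symm_d}$. I will state this as a one-sentence observation.

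For the reverse containment, I would take an arbitrary element $f \in R \cdot G \cap R^{\symm_d}$ and write $f = \sum_i r_i g_i$ with $r_i \in R$ and $g_i \in G$. The key idea is to apply the Reynolds operator $(-)^\natural$ coordinatewise. Because $f$ is $\symm_d$-invariant we have $f^\natural = f$, while because each $g_i$ is also $\symm_d$-invariant, $w \cdot (r_i g_i) = (w \cdot r_i) g_i$ for every $w \in \symm_d$. Averaging over $\symm_d$ therefore yields
\begin{equation}
f = f^\natural = \sum_i r_i^\natural \, g_i,
\end{equation}
and since $r_i^\natural \in R^{\symm_d}$ for each $i$, we conclude $f \in R^{\symm_d} \cdot G$, as desired.

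There is no serious obstacle here: the only ingredient beyond algebraic bookkeeping is that the Reynolds operator is well-defined, which is guaranteed by the hypothesis that $R$ is a $\QQ$-algebra (so that dividing by $d!$ makes sense). I would briefly remark that the characteristic-zero hypothesis is used precisely at this step, as the same statement can fail in positive characteristic where no Reynolds projection onto invariants is available.
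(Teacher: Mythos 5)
Your proof is correct and follows the same approach as the paper: the easy containment is stated as an observation, and the reverse containment is obtained by writing $f = \sum_i r_i g_i$ and applying the Reynolds operator, using $\symm_d$-invariance of $f$ and of the $g_i$ to conclude $f = \sum_i r_i^\natural g_i \in R^{\symm_d} \cdot G$. The closing remark about the characteristic-zero hypothesis is a sensible addition but not part of the paper's argument.
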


The standard proof of this lemma is as follows.

\begin{proof}
    Certainly $R^{\symm_d} \cdot G \subseteq R \cdot G \cap R^{\symm_d}$. For the reverse containment, let $f \in R \cdot G \cap R^{\symm_d}$. There exists an expression of the form
    \begin{equation}
        f = \sum_{g \, \in \, G} h_g \cdot g
    \end{equation}
    for some $h_g \in R$. Applying the Reynolds operator and using the fact that both $f$ and the elements of $G$ are $\symm_d$-invariant we get
    \begin{equation}
        f = \sum_{g \, \in \, G} (h_g)^\natural \cdot g
    \end{equation}
    so that $f \in R^{\symm_d} \cdot G$ as required.
\end{proof}

The inhomogeneous ideal $\II(\Zpoints_{n,k,d}(\aalpha_k))$ and the homogeneous ideal $\gr \, \II(\Zpoints_{n,k,d}(\aalpha_k))$ are both stable under the action of $\symm_d$ on $\QQ[\xxx_n,\yyy_d]$. We have ideals 
$\II(\Zpoints_{n,k,d}(\aalpha_k)) \cap \QQ[\xxx_n,\yyy_d]^{\symm_d}$ and $\gr \, \II(\Zpoints_{n,k,d}(\aalpha_k)) \cap \QQ[\xxx_n,\yyy_d]^{\symm_d}$ in the $\symm_d$-invariant subring. 

\begin{lemma}
    \label{lem:invariant-ideal-generators} As an ideal in $\QQ[\xxx_n,\yyy_d]^{\symm_d}$, the intersection $\II(\Zpoints_{n,k,d}(\aalpha_k)) \cap \QQ[\xxx_n,\yyy_d]^{\symm_d}$ has the same generators as in Lemma~\ref{lem:orbit-harmonics-Z-inhomogeneous}. As an ideal in $\QQ[\xxx_n,\yyy_d]^{\symm_d}$, the intersection 
    $$\gr \, \II(\Zpoints_{n,k,d}(\aalpha_k)) \cap \QQ[\xxx_n,\yyy_d]^{\symm_d} = J_{n,k,d}^\QQ \cap \QQ[\xxx_n,\yyy_d]^{\symm_d}$$
    has the same generating set as that of $J_{n,k,d}^\QQ$ in Definition~\ref{def:j-ideal-definition}.
\end{lemma}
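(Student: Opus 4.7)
The plan is to observe that this lemma is essentially a bookkeeping consequence of Lemma~\ref{lem:rational-invariant-generation}. The $\symm_d$-action on $\QQ[\xxx_n,\yyy_d]$ permutes only the $y$-variables and fixes the $x$-variables pointwise. Every generator listed in Lemma~\ref{lem:orbit-harmonics-Z-inhomogeneous} and Definition~\ref{def:j-ideal-definition} is built from $\xxx_n$, the scalar parameters $\aalpha_k$, and the \emph{symmetric} polynomials $e_r(\yyy_d)$ and $h_r(\yyy_d)$, so each generator already lies in $\QQ[\xxx_n,\yyy_d]^{\symm_d}$. Thus if $G$ denotes the generating set in either case, we have $G \subseteq \QQ[\xxx_n,\yyy_d]^{\symm_d}$, and the setup of Lemma~\ref{lem:rational-invariant-generation} applies directly with $R = \QQ[\xxx_n,\yyy_d]$.

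For the first claim, I would take $G$ to be the generators of Lemma~\ref{lem:orbit-harmonics-Z-inhomogeneous}, so that $\II(\Zpoints_{n,k,d}(\aalpha_k)) = \QQ[\xxx_n,\yyy_d] \cdot G$. Applying Lemma~\ref{lem:rational-invariant-generation} yields
\begin{equation}
\II(\Zpoints_{n,k,d}(\aalpha_k)) \cap \QQ[\xxx_n,\yyy_d]^{\symm_d} = \QQ[\xxx_n,\yyy_d]^{\symm_d} \cdot G,
\end{equation}
which is the asserted presentation in the invariant subring. For the second claim, Proposition~\ref{prop:orbit-harmonics-Z-homogeneous} already identifies $\gr\,\II(\Zpoints_{n,k,d}(\aalpha_k))$ with $J_{n,k,d}^\QQ$, so it suffices to argue about $J_{n,k,d}^\QQ$. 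Repeating the argument with $G$ now taken to be the generating set of $J_{n,k,d}^\QQ$ from Definition~\ref{def:j-ideal-definition} — again a subset of $\QQ[\xxx_n,\yyy_d]^{\symm_d}$ — gives
\begin{equation}
J_{n,k,d}^\QQ \cap \QQ[\xxx_n,\yyy_d]^{\symm_d} = \QQ[\xxx_n,\yyy_d]^{\symm_d} \cdot G,
\end{equation}
as required.

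There is no real obstacle here; the only thing to verify explicitly is that each displayed generator is $\symm_d$-invariant, which is immediate. I would include a one-sentence check of this invariance to justify the application of Lemma~\ref{lem:rational-invariant-generation}, and then quote that lemma twice. It is worth flagging that the Reynolds-operator argument in Lemma~\ref{lem:rational-invariant-generation} crucially uses that we are working over $\QQ$; this is the reason the ``integrality'' polynomials of Lemma~\ref{lem:integral-substaircase} were introduced in the previous proof but are not needed here, since the present lemma is only a rational statement.
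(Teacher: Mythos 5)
Your proposal is correct and follows the same route as the paper's proof: observe that the displayed generators are $\symm_d$-invariant, invoke Lemma~\ref{lem:rational-invariant-generation}, and use Proposition~\ref{prop:orbit-harmonics-Z-homogeneous} for the ideal equality in the second claim. The extra remark about why the argument is genuinely a rational phenomenon is a nice observation but not needed here.
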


\begin{proof}
    The displayed equality of ideals is part of Proposition~\ref{prop:orbit-harmonics-Z-homogeneous}.
    The generators appearing in Lemma~\ref{lem:orbit-harmonics-Z-inhomogeneous} and Definition~\ref{def:j-ideal-definition} are $\symm_d$-invariant. Now apply Lemma~\ref{lem:rational-invariant-generation}.
\end{proof}

We may regard both of the quotient rings 
\begin{equation}
    \label{eq:two-vector-spaces}
    \QQ[\xxx_n,\yyy_d]^{\symm_d}/(J_{n,k,d}^\QQ \cap \QQ[\xxx_n,\yyy_d]^{\symm_d}) \quad \text{and} \quad \QQ[\xxx_n,\yyy_d]^{\symm_d}/(\II(\Zpoints_{n,k,d}(\aalpha_k)) \cap \QQ[\xxx_n,\yyy_d]^{\symm_d})
\end{equation}
as $\QQ$-vector spaces. The following result is a relationship between bases of these vector spaces.

\begin{lemma}
    \label{lem:vector-space-relationship}
    The $\QQ$-vector spaces in \eqref{eq:two-vector-spaces} both have dimension $\frac{k!}{(k-d)!} \cdot \Stir(n,d)$. Furthermore, if $\BBB_{n,k,d} \subseteq \QQ[\xxx_n,\yyy_d]^{\symm_d}$ is a family of $\symm_d$-invariant homogeneous polynomials which descends to a vector space basis of $\QQ[\xxx_n,\yyy_d]^{\symm_d}/(J_{n,k,d}^\QQ \cap \QQ[\xxx_n,\yyy_d]^{\symm_d})$, then $\BBB_{n,k,d}$ also descends to a vector space basis of $\QQ[\xxx_n,\yyy_d]^{\symm_d}/(\II(\Zpoints_{n,k,d}(\aalpha_k)) \cap \QQ[\xxx_n,\yyy_d]^{\symm_d})$.
\end{lemma}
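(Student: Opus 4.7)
The plan is to use the Reynolds operator $(-)^\natural$, which exists because we work over $\QQ$, to reduce everything to orbit harmonics statements that have already been established. For the dimension count, since $\symm_d$ acts freely on $\Zpoints_{n,k,d}(\aalpha_k)$, the coordinate ring $\QQ[\Zpoints_{n,k,d}(\aalpha_k)]$ has a permutation basis indexed by points, and its $\symm_d$-invariant subring has a basis indexed by orbits; by \eqref{eq:z-orbit-count} this dimension is $\tfrac{k!}{(k-d)!}\cdot\Stir(n,d)$. In characteristic zero the Reynolds operator gives the natural isomorphism
$$\QQ[\xxx_n,\yyy_d]^{\symm_d}/\bigl(\II(\Zpoints_{n,k,d}(\aalpha_k))\cap\QQ[\xxx_n,\yyy_d]^{\symm_d}\bigr)\xrightarrow{\sim}\bigl(\QQ[\xxx_n,\yyy_d]/\II(\Zpoints_{n,k,d}(\aalpha_k))\bigr)^{\symm_d},$$
whose right-hand side is $\QQ[\Zpoints_{n,k,d}(\aalpha_k)]^{\symm_d}$, giving the desired dimension on the inhomogeneous side. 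The graded side follows identically by invoking the $\symm_d$-module isomorphism of Proposition~\ref{prop:orbit-harmonics-Z-homogeneous} and passing to invariants.

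For the basis-lifting statement, I plan to apply a filtered-to-graded argument in the graded $\QQ$-algebra $R:=\QQ[\xxx_n,\yyy_d]^{\symm_d}$; the proof of Lemma~\ref{lem:graded-basis} is formal and carries over verbatim to $R$. The crucial input is the identity
$$\gr\bigl(\II(\Zpoints_{n,k,d}(\aalpha_k))\cap R\bigr)=\gr\,\II(\Zpoints_{n,k,d}(\aalpha_k))\cap R$$
of ideals in $R$; the right-hand side equals $J_{n,k,d}^\QQ\cap R$ by Lemma~\ref{lem:invariant-ideal-generators}. The inclusion $\subseteq$ is immediate, since the top-degree component of an $\symm_d$-invariant polynomial is $\symm_d$-invariant. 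For $\supseteq$, given an invariant $g\in\gr\,\II(\Zpoints_{n,k,d}(\aalpha_k))\cap R$, choose $f\in\II(\Zpoints_{n,k,d}(\aalpha_k))$ with $\tau(f)=g$; then $f^\natural\in\II(\Zpoints_{n,k,d}(\aalpha_k))\cap R$ and, since the Reynolds operator preserves degrees, $\tau(f^\natural)=\tau(f)^\natural=g^\natural=g$.

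The main obstacle is this commutation of $\gr$ with taking $\symm_d$-invariants, which is where the characteristic-zero assumption genuinely enters via $(-)^\natural$; everything else is bookkeeping. Once the ideal identity is in place, Lemma~\ref{lem:graded-basis} applied inside $R$ lifts any homogeneous basis $\BBB_{n,k,d}$ of the graded quotient to a basis of the ungraded quotient, completing the proof.
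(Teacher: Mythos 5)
Your proof is correct and follows essentially the same route as the paper: Reynolds operator for the dimension count, then a filtered-to-graded basis lift inside $R=\QQ[\xxx_n,\yyy_d]^{\symm_d}$. The one place where your argument diverges in flavor is how you establish that $J^\QQ_{n,k,d}\cap R$ is the associated graded ideal of $\II(\Zpoints_{n,k,d}(\aalpha_k))\cap R$. You prove the clean commutation $\gr_R\bigl(\II\cap R\bigr)=\bigl(\gr\,\II\bigr)\cap R$ directly, using the Reynolds operator to pass from a representative $f$ with $\tau(f)=g$ to the invariant representative $f^\natural$; the paper instead reads this off from the explicit generator matching in Lemma~\ref{lem:invariant-ideal-generators} together with Proposition~\ref{prop:orbit-harmonics-Z-homogeneous}. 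Both are valid, and your version makes the conceptual content (that $\gr$ commutes with taking $\symm_d$-invariants over $\QQ$) more visible. Two small points worth making explicit if you write this up: (i) in the $\supseteq$ step it suffices to treat nonzero \emph{homogeneous} $g$, and one should observe that every such $g\in\gr\,\II$ does arise as $\tau(f)$ for some single $f\in\II$ (true, but not by definition of $\gr$); and (ii) $\tau(f^\natural)=\tau(f)^\natural$ only because $\tau(f)^\natural=g\neq 0$ — in general the top-degree component can die under $(-)^\natural$. With those remarks in place, the argument is airtight.
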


\begin{proof}
    The Reynolds operator $(-)^\natural = \frac{1}{d!} \sum_{w \in \symm_d} w \cdot (-)$ gives a projection $\QQ[\xxx_n,\yyy_d] \twoheadrightarrow \QQ[\xxx_n,\yyy_d]^{\symm_d}$ which induces an isomorphism
    \begin{equation}
        (\QQ[\xxx_n,\yyy_d]/J_{n,k,d}^\QQ)^{\symm_d} \xrightarrow{\, \, \sim \, \, } \QQ[\xxx_n,\yyy_d]^{\symm_d}/(J_{n,k,d}^\QQ \cap \QQ[\xxx_n,\yyy_d]^{\symm_d})
    \end{equation}
    of graded $\QQ$-vector spaces and an isomorphism
    \begin{equation}
        (\QQ[\xxx_n,\yyy_d]/\II(\Zpoints_{n,k,d}(\aalpha_k)))^{\symm_d} \xrightarrow{\, \, \sim \, \,}
        \QQ[\xxx_n,\yyy_d]^{\symm_d}/(\II(\Zpoints_{n,k,d}(\aalpha_k)) \cap \QQ[\xxx_n,\yyy_d]^{\symm_d})
    \end{equation}
    of ungraded $\QQ$-vector spaces. By Proposition~\ref{prop:orbit-harmonics-Z-homogeneous}, all four of these $\QQ$-vector spaces have dimension equal to the number of $\symm_d$-orbits in $\Zpoints_{n,k,d}(\aalpha_k)$. Equation~\eqref{eq:z-orbit-count} implies that this dimension is $\frac{k!}{(k-d)!} \cdot \Stir(n,d)$.

    Lemma~\ref{lem:invariant-ideal-generators} implies that 
    \begin{quote}$J_{n,k,d}^\QQ \cap \QQ[\xxx_n,\yyy_d]^{\symm_d}$ is the associated graded ideal of $\II(\Zpoints_{n,k,d}(\aalpha_k)) \cap \QQ[\xxx_n,\yyy_d]^{\symm_d}$ within the invariant ring $\QQ[\xxx_n,\yyy_d]^{\symm_d}$.
    \end{quote}
    With this algebraic fact in hand, the second part of the lemma follows from the following general fact. Let $R = \bigoplus_{d \geq 0} R_d$ be any graded $\QQ$-algebra. If $I \subseteq R$ is an ideal, we may define the associated graded ideal $\gr \, I \subseteq R$ as in Section~\ref{sec:Background}.

    {\bf Claim:} {\em  Let $\BBB$ be a set of homogeneous elements of $R$ which descends to a $\QQ$-basis of $R/\gr \, I$. Then $\BBB$ descends to a basis of $R/I.$}

    This claim and its (standard) proof are very similar to those of Lemma~\ref{lem:graded-basis}; we include an argument for completeness. If $\BBB$ did not span $R/I$, there would exist a homogeneous element $g \in R_d$ of minimal degree $d$ such that $g \notin \mathrm{span}_\QQ(\BBB) \mod I$. Since $\BBB$ spans $R/\gr \, I$, there exist rational numbers $c_b \in \QQ$ and an element $f \in I$ so that
    \begin{equation}
        g = \sum_{b \, \in \, \BBB} c_b \cdot b + \tau(f)
    \end{equation}
    where $\tau(f)$ is the top degree homogeneous component of $f$. Since both $g$ and the elements of $\BBB$ are homogeneous and $g \notin \mathrm{span}_\QQ(\BBB) \mod I$, we deduce that $\deg(f) = \deg(g) = d$. The minimality of $d$ implies $\tau(f) - f \in \mathrm{span}_\QQ(\BBB) \mod I$, so 
    \begin{equation}
        g = \sum_{b \, \in \, \BBB} c_b \cdot b + (\tau(f) - f) + f \in \mathrm{span}_\QQ(\BBB) \mod I,
    \end{equation}
    a contradiction which shows that $\BBB$ spans $R/I$. 

    If $\BBB$ were linearly dependent modulo $I$, there would exist an element $f \in I$ and scalars $c_b \in \QQ$ not all zero so that
    \begin{equation}
        \sum_{b \, \in \, \BBB} c_b \cdot b = f.
    \end{equation}
    Since $\BBB$ is linearly independent modulo $\gr \, I$, we know that $\BBB$ is linearly independent as a subset of $R$ and $f \neq 0$. If $\deg(f) = d$, since elements of $\BBB$ are homogeneous we have
    \begin{equation}
        \sum_{\deg(b) \, = \, d} c_b \cdot b = \tau(f) \in \gr \, I.
    \end{equation}
    The linear independence of $\BBB$ in $R$ forces $c_b \neq 0$ for at least one $b \in \BBB$ with $\deg(b) = d$, so $\BBB$ is linearly dependent modulo $R/\gr \, I$, a contradiction which completes the proof of the claim and the lemma.
\end{proof}

\subsection{An infinite family} In order to incorporate the torus variables $\ttt_k$, we put the loci $\Zpoints_{n,k,d}(\aalpha_k)$ into an infinite family as follows. 

\begin{defn}
    \label{def:z-family}
    Fix positive integers $n \geq k \geq d$. We let $\Zpoints_{n,k,d} \subseteq \QQ^{n+d+k}$ be the set of points $(z_1, \dots, z_n; z_{n+1}, \dots, z_{n+d}; z_{n+d+1}, \dots, z_{n+d+k})$ such that
    \begin{itemize}
        \item there exists an injective function $f: \{n+1,\dots,n+d\} \hookrightarrow \{n+d+1,\dots,n+d+k\}$ such that $z_i = z_{f(i)}$ for all $n+1 \leq i \leq n+d$, and 
        \item there exists a surjective function $g: \{1,\dots,n\} \twoheadrightarrow \{n+1,\dots,n+d\}$ such that $z_j = z_{g(j)}$ for all $1 \leq j \leq n$.
    \end{itemize}
\end{defn}

If $\pi: \QQ^{n+d+k} \twoheadrightarrow \QQ^{n+d}$ is the projection which forgets the last $k$ coordinates, we have 
\begin{equation}
\label{eq:pi-fiber}
    \pi^{-1}(\aalpha_k) \cap \Zpoints_{n,k,d} = \Zpoints_{n,k,d}(\aalpha_k) \times \{\aalpha_k\}
\end{equation}
whenever $\aalpha_k \in \QQ^k$ has distinct coordinates. It is not hard to see that within $\QQ^{n+d+k}$ we have
\begin{equation}
    \Zpoints_{n,k,d} = \text{Zariski closure of }
    \bigcup_{\aalpha_k} \Zpoints_{n,k,d}(\aalpha_k) \times \{\aalpha_k\}
\end{equation}
where the union is over all $\aalpha_k \in \QQ^k$ with distinct coordinates.

Let $\QQ[\xxx_n,\yyy_d,\ttt_k]$ be the coordinate ring of $\QQ^{n+d+k}$. The vanishing ideal $\II(\Zpoints_{n,k,d})$ is a subset of $\QQ[\xxx_n,\yyy_d,\ttt_k]$. We will prove that the ideal $\II(\Zpoints_{n,k,d})$ has the following set of generators.

\begin{defn}
    \label{def:j-t-ideal}
    Let $J_{n,k,d}^{\QQ,\ttt} \subseteq \QQ[\xxx_n,\yyy_d,\ttt_k]$ be the ideal generated by 
    \begin{itemize}
    \item $e_r(\ttt_k) - e_{r-1}(\ttt_k) h_1(\yyy_d) + \cdots + (-1)^r h_r(\yyy_d)$ for all $r > k-d$,
    \item $e_r(\xxx_n) - e_{r-1}(\xxx_n) h_1(\yyy_d) + \cdots + (-1)^r h_r(\yyy_d)$ for all $r > n-d$, and
    \item $x_i^d - x_i^{d-1} e_1(\yyy_d) + \cdots + (-1)^d e_d(\yyy_d)$ for $i = 1, \dots , n$.
    \end{itemize}
\end{defn}

Observe that the generators of $J_{n,k,d}^\QQ \subseteq \QQ[\xxx_n,\yyy_d]$ in Definition~\ref{def:j-ideal-definition} are obtained from those of $J_{n,k,d}^{\QQ,\ttt} \subseteq \QQ[\xxx_n,\yyy_d,\ttt_k]$ in Definition~\ref{def:j-t-ideal} by setting the $t$-variables equal to 0. Also, the generators of $\II(\Zpoints_{n,k,d}(\aalpha_k))$ in Lemma~\ref{lem:orbit-harmonics-Z-inhomogeneous} are obtained from those in Definition~\ref{def:j-t-ideal} by setting $t_i \to \alpha_i$. We start by establishing the following containment of ideals.

\begin{lemma}
    \label{lem:j-t-containment}
    We have the containment of ideals $J_{n,k,d}^{\QQ,\ttt} \subseteq \II(\Zpoints_{n,k,d})$.
\end{lemma}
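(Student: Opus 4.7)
The plan is to check directly that each of the three families of generators of $J_{n,k,d}^{\QQ,\ttt}$ vanishes identically on the locus $\Zpoints_{n,k,d}$; since $\II(\Zpoints_{n,k,d})$ is by definition the vanishing ideal of this locus, a pointwise verification for each generator suffices for the stated ideal containment. The whole argument will run in parallel with the proof of Lemma~\ref{lem:orbit-harmonics-Z-inhomogeneous}, with the parameters $\alpha_\ell$ there replaced by the coordinate values $t_\ell$ here.

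Concretely, I would fix a point $p = (z_1,\dots,z_{n+d+k}) \in \Zpoints_{n,k,d}$ and rename its coordinates as $x_i = z_i$ for $1 \leq i \leq n$, $y_j = z_{n+j}$ for $1 \leq j \leq d$, and $t_\ell = z_{n+d+\ell}$ for $1 \leq \ell \leq k$. By Definition~\ref{def:z-family}, the injection $f$ exhibits the multiset $\{y_1,\dots,y_d\}$ as a sub-multiset of $\{t_1,\dots,t_k\}$ (selected by the $d$ distinct positions $f(n+1),\dots,f(n+d)$), while the surjection $g$ ensures both that each $x_i$ equals some $y_{g(i)-n}$ and that every $y_j$ occurs among $x_1,\dots,x_n$. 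Using the rational function trick from Lemma~\ref{lem:orbit-harmonics-Z-inhomogeneous}, and introducing an auxiliary variable $u$, the generators in the first bullet of Definition~\ref{def:j-t-ideal} vanish on $p$ if and only if
\[
\frac{\prod_{\ell=1}^k (1 + t_\ell u)}{\prod_{j=1}^d (1 + y_j u)}
\]
is a polynomial in $u$ of degree at most $k - d$, which is immediate from the multiset inclusion $\{y_j\} \subseteq \{t_\ell\}$ supplied by $f$. The second bullet is handled identically, with the ratio $\prod_i(1+x_i u)/\prod_j(1+y_j u)$ being a polynomial in $u$ of degree at most $n-d$ thanks to the multiset inclusion $\{y_j\} \subseteq \{x_i\}$ supplied by $g$. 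The third bullet expands as $\prod_{j=1}^d (x_i - y_j)$, which vanishes because $x_i = y_{g(i) - n}$ by construction.

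There is no real obstacle here: the proof reduces to a direct pointwise check, mirroring the argument already given for Lemma~\ref{lem:orbit-harmonics-Z-inhomogeneous}. The only point that requires a little care is that the $t$-coordinates of a general point of $\Zpoints_{n,k,d}$ need not be distinct, so the containments $\{y_j\} \subseteq \{t_\ell\}$ and $\{y_j\} \subseteq \{x_i\}$ must be interpreted as statements about multisets (equivalently, as equalities of indexed coordinates via $f$ and $g$) rather than as set-theoretic inclusions. Once this is acknowledged, the rational function identities go through verbatim and the containment $J_{n,k,d}^{\QQ,\ttt} \subseteq \II(\Zpoints_{n,k,d})$ follows. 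The reverse containment, needed to deduce a presentation, is deferred to subsequent results.
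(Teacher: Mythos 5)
Your proof is correct and takes essentially the same approach as the paper, which also verifies each generator pointwise via the rational-function-in-$u$ trick borrowed from the proof of Lemma~\ref{lem:orbit-harmonics-Z-inhomogeneous}. Your explicit remark that the divisibility of $\prod_\ell(1+t_\ell u)$ by $\prod_j(1+y_j u)$ (and of $\prod_i(1+x_i u)$ by $\prod_j(1+y_j u)$) must be read as a multiset containment is a welcome clarification, since Definition~\ref{def:z-family} allows repeated $t$-coordinates, but it does not change the argument.
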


\begin{proof}
    This is similar to the proof of Lemma~\ref{lem:orbit-harmonics-Z-inhomogeneous}. Let $u$ be a new variable. If $r > k-d$, the rational function
    $$ \frac{(1 + t_1 u) \cdots (1 + t_k u)}{(1 + y_1 u) \cdots (1 + y_d u)}$$
    when restricted to $\Zpoints_{n,k,d}$ is a polynomial in $u$ of degree $k-d$. Since the coefficient of $u^r$ in this rational function is $e_r(\ttt_k) - e_{r-1}(\ttt_k) h_1(\yyy_d) + \cdots + (-1)^r h_r(\yyy_d)$, we see that the generators in the first bullet point of Definition~\ref{def:j-t-ideal} lie in $\II(\Zpoints_{n,k,d})$. The generators in the second and third bullet points are handled in a similar fashion.
\end{proof}

We upgrade Lemma~\ref{lem:j-t-containment} to an equality of ideals. In the process of doing this, we will establish a vector space basis of the corresponding quotient ring. This basis will have the following form.

\begin{defn}
    \label{def:t-augmentation}
    For any set $\BBB$ of polynomials, let $\BBB^\ttt$ be the set of polynomials given by
    \begin{equation}
        \BBB^\ttt := \{ f \cdot t_1^{a_1} \cdots t_k^{a_k} \,:\, f \in \BBB, \, a_1, \dots, a_k \geq 0 \}.
    \end{equation}
\end{defn}

 Recall the set $\AAA_{n,k,d}$ of monomials in $\QQ[\xxx_n,\yyy_d]$ of Definition~\ref{defn:a-monomials}. The next result shows both that $\II(\ZZZ_{n,k,d}) = J_{n,k,d}^{\QQ,\ttt}$ and that $\AAA_{n,k,d}^\ttt$ descends to a basis of $\QQ[\xxx_n,\yyy_d,\ttt_k]$ modulo this common ideal.

\begin{proposition}
    \label{prop:t-ideal-equality}
    We have the equality of ideals $J_{n,k,d}^{\QQ,\ttt} = \II(\Zpoints_{n,k,d})$ inside $\QQ[\xxx_n,\yyy_d,\ttt_k]$. The set $\AAA_{n,k,d}^\ttt$ descends to a $\QQ$-basis for the common quotient ring
    $$\QQ[\xxx_n,\yyy_d,\ttt_k]/J_{n,k,d}^{\QQ,\ttt} = \QQ[\xxx_n,\yyy_d,\ttt_k]/\II(\Zpoints_{n,k,d}).$$
    In particular, this quotient is a free $\QQ[\ttt_k]$-module of rank $\frac{k! \cdot d!}{(k-d)!} \cdot \Stir(n,d)$.
\end{proposition}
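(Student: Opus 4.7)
The plan is to bootstrap from Proposition~\ref{prop:orbit-harmonics-Z-homogeneous}, which handles the $\ttt_k = \mathbf{0}$ specialization, using graded Nakayama in one direction and Zariski density in the other. Lemma~\ref{lem:j-t-containment} already gives $J_{n,k,d}^{\QQ,\ttt} \subseteq \II(\Zpoints_{n,k,d})$, so it suffices to prove that $\AAA_{n,k,d}^\ttt$ descends to a $\QQ$-basis of $Q := \QQ[\xxx_n,\yyy_d,\ttt_k]/J_{n,k,d}^{\QQ,\ttt}$; once this is in hand, the reverse ideal containment and the freeness of $Q$ as a rank $|\AAA_{n,k,d}| = \tfrac{k!\,d!}{(k-d)!}\,\Stir(n,d)$ module over $\QQ[\ttt_k]$ (Equation~\eqref{eq:a-count}) will fall out immediately.

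For spanning, I assign each of $\xxx_n$, $\yyy_d$, $\ttt_k$ degree $1$ and observe by inspection of Definition~\ref{def:j-t-ideal} that every listed generator is homogeneous (of degrees $r$, $r$, and $d$ respectively), so $J_{n,k,d}^{\QQ,\ttt}$ is a graded ideal and $Q$ is a graded module over the standard-graded polynomial ring $\QQ[\ttt_k]$. Setting $\ttt_k \mapsto \mathbf{0}$ carries the generators of $J_{n,k,d}^{\QQ,\ttt}$ term-by-term to the generators of $J_{n,k,d}^\QQ$ in Definition~\ref{def:j-ideal-definition}, so
\[ Q/(\ttt_k)\,Q \;\cong\; \QQ[\xxx_n,\yyy_d]/J_{n,k,d}^\QQ, \]
whose $\QQ$-basis is $\AAA_{n,k,d}$ by Proposition~\ref{prop:orbit-harmonics-Z-homogeneous}. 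A graded Nakayama argument, inducting on total degree and using that every homogeneous element of $Q$ agrees modulo $(\ttt_k)Q$ with a $\QQ$-combination of $\AAA_{n,k,d}$, then shows that $\AAA_{n,k,d}$ generates $Q$ as a $\QQ[\ttt_k]$-module, i.e.\ that $\AAA_{n,k,d}^\ttt$ spans $Q$ over $\QQ$.

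For $\QQ[\ttt_k]$-linear independence and the reverse ideal containment, I would specialize at parameters with distinct coordinates. Suppose $f = \sum_{a \in \AAA_{n,k,d}} p_a(\ttt_k) \cdot a$ lies in $\II(\Zpoints_{n,k,d})$. For any $\aalpha_k \in \QQ^k$ with distinct coordinates, the substitution $t_i \mapsto \alpha_i$ sends each generator in Definition~\ref{def:j-t-ideal} to the corresponding generator in Lemma~\ref{lem:orbit-harmonics-Z-inhomogeneous} and, by~\eqref{eq:pi-fiber}, carries $\II(\Zpoints_{n,k,d})$ into $\II(\Zpoints_{n,k,d}(\aalpha_k))$. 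Thus $\sum_a p_a(\aalpha_k)\cdot a \in \II(\Zpoints_{n,k,d}(\aalpha_k))$, and since Proposition~\ref{prop:orbit-harmonics-Z-homogeneous} combined with Lemma~\ref{lem:graded-basis} makes $\AAA_{n,k,d}$ a $\QQ$-basis of $\QQ[\xxx_n,\yyy_d]/\II(\Zpoints_{n,k,d}(\aalpha_k))$, each $p_a(\aalpha_k)$ vanishes. The distinct-coordinate locus is Zariski dense in $\QQ^k$, so every $p_a$ is identically zero and $f \in J_{n,k,d}^{\QQ,\ttt}$. Applied to a general $f \in \II(\Zpoints_{n,k,d})$ after first using the spanning step to write $f \equiv \sum p_a(\ttt_k)\cdot a \pmod{J_{n,k,d}^{\QQ,\ttt}}$, this yields both $\II(\Zpoints_{n,k,d}) = J_{n,k,d}^{\QQ,\ttt}$ and the $\QQ[\ttt_k]$-linear independence of $\AAA_{n,k,d}$ in $Q$. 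The main technical point I expect to handle carefully is the graded Nakayama step — verifying that $Q$ really is graded over $\QQ[\ttt_k]$ in a compatible way — but once that is set up correctly everything else reduces to the specialization-plus-density argument above.
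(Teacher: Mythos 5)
Your proof is correct and shares the paper's central mechanism. The linear-independence-plus-ideal-equality argument is exactly the paper's: apply the evaluation homomorphisms $\varepsilon_{\aalpha_k}$, use Proposition~\ref{prop:orbit-harmonics-Z-homogeneous} together with Lemma~\ref{lem:graded-basis} to conclude that all coefficients $p_a(\aalpha_k)$ vanish at each $\aalpha_k$ with distinct coordinates, and pass to $p_a \equiv 0$ by Zariski density. Where you diverge is the spanning step for $Q = \QQ[\xxx_n,\yyy_d,\ttt_k]/J_{n,k,d}^{\QQ,\ttt}$: the paper runs an explicit induction on $\deg(m_\xxx \cdot m_\yyy)$, manually rewriting a monomial modulo $J_{n,k,d}^{\QQ,\ttt}$ by inserting the difference $\bar g - g$ (which raises $t$-degree and lowers $x,y$-degree), whereas you assign all variables degree $1$, check that the generators of $J_{n,k,d}^{\QQ,\ttt}$ are homogeneous, observe that $Q/(\ttt_k)Q \cong \QQ[\xxx_n,\yyy_d]/J_{n,k,d}^\QQ$ has $\QQ$-basis $\AAA_{n,k,d}$, and invoke graded Nakayama. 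That repackaging is cleaner and avoids the paper's bookkeeping; it is really the same induction, structured via a standard lemma. If you flesh this out, the only detail to make explicit is that $Q$ is non-negatively graded (hence bounded below) and that the elements of $\AAA_{n,k,d}$ are homogeneous, which is exactly what the graded Nakayama lemma needs to apply without any finiteness hypotheses.
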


\begin{proof}
    The last sentence follows from the first two. We start by establishing the linear independence of $\AAA_{n,k,d}^\ttt$ modulo $\II(\Zpoints_{n,k,d})$. For any $\aalpha_k = (\alpha_1, \dots, \alpha_n) \in \QQ^k$, let 
    \begin{equation}
        \varepsilon_{\aalpha_k}: \QQ[\xxx_n,\yyy_d,\ttt_k] \twoheadrightarrow \QQ[\xxx_n,\yyy_d]
    \end{equation}
    be the evaluation homomorphism which sends $t_i$ to $\alpha_i$. It follows from \eqref{eq:pi-fiber} that 
    \begin{quote}
        whenever $\aalpha_k \in \QQ^k$ has distinct coordinates, the image of $\II(\Zpoints_{n,k,d})$ under $\varepsilon_{\aalpha_k}$ is a subset of $\II(\Zpoints_{n,k,d}(\aalpha_k))$.
    \end{quote}
    If $\AAA_{n,k,d}^\ttt$ were not linearly independent modulo $\II(\Zpoints_{n,k,d})$, there would exist polynomials $f_a(\ttt_k) \in \QQ[\ttt_k]$ not all zero such that
    \begin{equation}
        \sum_{a \, \in \, \AAA_{n,k,d}} f_a(\ttt_k) \cdot a \in \II(\Zpoints_{n,k,d}).
    \end{equation}
    If $\aalpha_k \in \QQ^k$ has distinct points, we may apply the evaluation homomorphism $\varepsilon_{\aalpha_k}$ to get
    \begin{equation}
        \sum_{a \, \in \, \AAA_{n,k,d}} f_a(\aalpha_k) \cdot a \in \II(\Zpoints_{n,k,d}(\aalpha_k)).
    \end{equation}
    By Lemma~\ref{lem:graded-basis} and Proposition~\ref{prop:orbit-harmonics-Z-homogeneous}, the set $\AAA_{n,k,d}$ is linearly independent modulo $\II(\Zpoints_{n,k,d}(\aalpha_k))$. We conclude that
    \begin{quote}
        for all $a \in \AAA_{n,k,d}$ we have $f_a(\aalpha_k) = 0$ whenever $\aalpha_k \in \QQ^k$ has distinct coordinates.
    \end{quote}
    Since the set of points in $\QQ^k$ with distinct coordinates is dense in $\QQ^k$, this forces $f_a(\ttt_k) \equiv 0$ for each $a \in \AAA_{n,k,d}$. We conclude that $\AAA_{n,k,d}^\ttt$ is linearly independent modulo $\II(\Zpoints_{n,k,d}(\aalpha_k))$.

    By Lemma~\ref{lem:j-t-containment}, the proposition will be proven if we can show that $\AAA_{n,k,d}^\ttt$ spans $\QQ[\xxx_n,\yyy_d,\ttt_k]$ modulo $J_{n,k,d}^{\QQ,\ttt}$. To this end, let $m = m_\xxx \cdot m_\yyy \cdot m_\ttt$ be a monomial in $\QQ[\xxx_n,\yyy_d,\ttt_k]$ where $m_\xxx$ involves the $x$-variables, $m_\yyy$ involves the $y$-variables, and $m_\ttt$ involves the $t$-variables. We want to show that $m$ lies in the span of $\AAA_{n,k,d}^\ttt$ modulo $J_{n,k,d}^{\QQ,\ttt}$. If $\deg(m_\xxx \cdot m_\yyy) = 0$, we have $m_\ttt \in \AAA_{n,k,d}^\ttt$ and this is clear, so assume that $\deg(m_\xxx \cdot m_\yyy) > 0$. If $g$ is a generator of $J_{n,k,d}^{\QQ,\ttt}$, let $\bar{g}$ be the corresponding generator of $J_{n,k,d}^\QQ$ obtained by setting the $t$-variables equal to 0. By Proposition~\ref{prop:orbit-harmonics-Z-homogeneous} we may write 
    \begin{equation}
    \label{eq:ideal-equality-one}
        m_\xxx \cdot m_\yyy = \sum_{a \, \in \, \AAA_{n,k,d}} \gamma_a \cdot a + \sum_g h_g \cdot \bar{g}
    \end{equation}
    for some scalars $\gamma_a \in \QQ$ and polynomials $h_g \in \QQ[\xxx_n,\yyy_d]$ where the sum is over generators $g$ of $J_{n,k,d}^{\QQ,\ttt}$. In particular, Equation~\eqref{eq:ideal-equality-one} does not involve any $t$-variables. Discarding redundant terms, we may assume that 
    \begin{itemize}
    \item
    $\gamma_a \neq 0$ whenever $\deg(m_\xxx \cdot m_\yyy) \neq \deg(a)$ and 
    \item every product $h_g \cdot g'$ is homogeneous of degree $\deg(m_\xxx \cdot m_\yyy)$. 
    \end{itemize}
    Multiplying through by $m_\ttt$ gives
    \begin{multline}
        m = m_\xxx \cdot m_\yyy \cdot m_\ttt=  \sum_{a \, \in \, \AAA_{n,k,d}} \gamma_a \cdot a \cdot m_\ttt + \sum_{g} h_g \cdot \bar{g} \cdot m_\ttt \\ 
        = \sum_{a \, \in \, \AAA_{n,k,d}} \gamma_a \cdot a \cdot m_\ttt + \sum_{g} h_g \cdot (\bar{g}-g) \cdot m_\ttt + \sum_{g} h_g \cdot g \cdot m_\ttt.
    \end{multline}
    The $x$-degree plus the $y$-degree of each term $h_g \cdot (\bar{g}-g) \cdot m_\ttt$ in the second sum is strictly less than $\deg(m_\xxx \cdot m_\yyy)$. Since $\sum_{g} h_g \cdot g \cdot m_\ttt \in J_{n,k,d}^{\QQ,\ttt}$, by induction on $\deg(m_\xxx \cdot m_\yyy)$ we conclude that $m$ lies in the span of $\AAA_{n,k,d}^\ttt$ modulo $J_{n,k,d}^{\QQ,\ttt}$.
    \end{proof}

    The symmetric group $\symm_d$ acts on the middle variables of $\QQ[\xxx_n,\yyy_d,\ttt_k]$. The following result has the same proof as Lemma~\ref{lem:invariant-ideal-generators} involving the Reynolds operator; we omit the argument this time.

    \begin{lemma}
        \label{lem:t-invariant-ideal-generators}
        As an ideal in $\QQ[\xxx_n,\yyy_d,\ttt_k]^{\symm_d}$, the intersection
        $$\II(\Zpoints_{n,k,d}) \cap \QQ[\xxx_n,\yyy_d,\ttt_k]^{\symm_d} = J_{n,k,d}^{\QQ,\ttt} \cap \QQ[\xxx_n,\yyy_d,\ttt_k]^{\symm_d}$$
        has the same generating set as that of $J_{n,k,d}^{\QQ,\ttt}$ in Definition~\ref{def:j-t-ideal}.
    \end{lemma}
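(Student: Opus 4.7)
The plan is to transplant the argument of Lemma~\ref{lem:invariant-ideal-generators} to the enlarged ambient ring $\QQ[\xxx_n,\yyy_d,\ttt_k]$. The displayed equality of ideals is not something new to prove: Proposition~\ref{prop:t-ideal-equality} already establishes $\II(\Zpoints_{n,k,d}) = J_{n,k,d}^{\QQ,\ttt}$ as ideals in the full polynomial ring $\QQ[\xxx_n,\yyy_d,\ttt_k]$, and intersecting both sides with the $\symm_d$-invariant subring yields the displayed equation for free.

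Next I would check that each of the generators listed in Definition~\ref{def:j-t-ideal} is itself $\symm_d$-invariant. Since $\symm_d$ acts only on the $y$-variables, and the generators are built from $h_r(\yyy_d)$, $e_r(\yyy_d)$, and polynomials involving only the $x$- and $t$-variables (which are fixed by $\symm_d$), every generator lies in $\QQ[\xxx_n,\yyy_d,\ttt_k]^{\symm_d}$.

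Finally I would apply Lemma~\ref{lem:rational-invariant-generation} with $R = \QQ[\xxx_n,\yyy_d,\ttt_k]$, with the $\symm_d$-action on the $y$-variables, and with $G$ the generating set from Definition~\ref{def:j-t-ideal}. That lemma gives
\begin{equation}
J_{n,k,d}^{\QQ,\ttt} \cap \QQ[\xxx_n,\yyy_d,\ttt_k]^{\symm_d} \;=\; \QQ[\xxx_n,\yyy_d,\ttt_k]^{\symm_d} \cdot G,
\end{equation}
which is exactly the statement that $G$ generates the intersection as an ideal of the invariant ring. Combined with the ideal identity of the first paragraph, this proves the lemma.

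There is essentially no obstacle here. The only conceptual step is recognizing that Proposition~\ref{prop:t-ideal-equality} plays the same role in the $\ttt$-enriched setting that Proposition~\ref{prop:orbit-harmonics-Z-homogeneous} played in the parameter-specialized setting; once that is observed, the Reynolds-operator mechanism packaged in Lemma~\ref{lem:rational-invariant-generation} handles the descent to invariants automatically, and the calculation is complete.
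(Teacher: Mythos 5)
Your proposal is correct and follows exactly the argument the paper intends: the paper explicitly states that this lemma ``has the same proof as Lemma~\ref{lem:invariant-ideal-generators} involving the Reynolds operator,'' and your three steps --- citing Proposition~\ref{prop:t-ideal-equality} for the ideal equality, observing the generators are $\symm_d$-invariant, and then invoking Lemma~\ref{lem:rational-invariant-generation} --- are precisely that argument transplanted to $\QQ[\xxx_n,\yyy_d,\ttt_k]$.
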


    As in the setting without $t$-variables, the Reynolds operator induces an isomorphism of $\QQ[\ttt_k]$-modules
    \begin{equation}
     (\QQ[\xxx_n,\yyy_d,\ttt_k]/J_{n,k,d}^{\QQ,\ttt})^{\symm_d} \xrightarrow{ \, \, \sim \, \, }
     \QQ[\xxx_n,\yyy_d,\ttt_k]^{\symm_d}/(J_{n,k,d}^{\QQ,\ttt} \cap \QQ[\xxx_n,\yyy_d,\ttt_k]^{\symm_d}).
    \end{equation}
    The next result implies that this common quotient is a free $\QQ[\ttt_k]$-module.

    \begin{lemma}
        \label{lem:t-invariant-free-module}
        Let $\BBB_{n,k,d} \subseteq \QQ[\xxx_n,\yyy_d]^{\symm_d}$ be a family of $\symm_d$-invariant homogeneous polynomials which descends to a $\QQ$-basis of $\QQ[\xxx_n,\yyy_d]^{\symm_d}/(J_{n,k,d}^\QQ \cap \QQ[\xxx_n,\yyy_d]^{\symm_d})$. Then $\BBB_{n,k,d}^\ttt$ descends to a $\QQ$-basis of $\QQ[\xxx_n,\yyy_d,\ttt_k]^{\symm_d}/(J_{n,k,d}^{\QQ,\ttt} \cap \QQ[\xxx_n,\yyy_d,\ttt_k]^{\symm_d})$. 
        
        In particular, the quotient $\QQ[\xxx_n,\yyy_d,\ttt_k]^{\symm_d}/(J_{n,k,d}^{\QQ,\ttt} \cap \QQ[\xxx_n,\yyy_d,\ttt_k]^{\symm_d})$ is a free $\QQ[\ttt_k]$-module of rank $\frac{k!}{(k-d)!} \cdot \Stir(n,d)$.
    \end{lemma}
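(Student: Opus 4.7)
The plan is to mimic the proof of Proposition~\ref{prop:t-ideal-equality} in the $\symm_d$-invariant setting, replacing Proposition~\ref{prop:orbit-harmonics-Z-homogeneous} with Lemma~\ref{lem:vector-space-relationship} throughout and maintaining $\symm_d$-invariance via the Reynolds operator (Lemma~\ref{lem:rational-invariant-generation}). As in Proposition~\ref{prop:t-ideal-equality}, I would establish linear independence and spanning of $\BBB_{n,k,d}^\ttt$ separately modulo $J_{n,k,d}^{\QQ,\ttt} \cap \QQ[\xxx_n,\yyy_d,\ttt_k]^{\symm_d}$; the free-module and rank statements are then immediate, using $|\BBB_{n,k,d}| = \frac{k!}{(k-d)!} \cdot \Stir(n,d)$ from Lemma~\ref{lem:vector-space-relationship}.

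For linear independence, suppose $\sum_{b \in \BBB_{n,k,d}} f_b(\ttt_k) \cdot b \in J_{n,k,d}^{\QQ,\ttt}$ for some $f_b \in \QQ[\ttt_k]$. For any $\aalpha_k \in \QQ^k$ with distinct coordinates, apply the evaluation map $\varepsilon_{\aalpha_k}$: since $\varepsilon_{\aalpha_k}(J_{n,k,d}^{\QQ,\ttt}) \subseteq \II(\Zpoints_{n,k,d}(\aalpha_k))$ (as noted in the proof of Proposition~\ref{prop:t-ideal-equality}) and each $b$ is $\symm_d$-invariant, we obtain $\sum_b f_b(\aalpha_k) \cdot b \in \II(\Zpoints_{n,k,d}(\aalpha_k)) \cap \QQ[\xxx_n,\yyy_d]^{\symm_d}$. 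Lemma~\ref{lem:vector-space-relationship} then forces $f_b(\aalpha_k) = 0$ for all $b$, and Zariski density of the $\aalpha_k$ with distinct coordinates in $\QQ^k$ yields $f_b \equiv 0$.

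For spanning, I would decompose an arbitrary $g \in \QQ[\xxx_n,\yyy_d,\ttt_k]^{\symm_d}$ first by $(\xxx,\yyy)$-homogeneous degree and then by $\ttt$-monomial; since $\symm_d$ acts trivially on $\ttt$ and preserves $(\xxx,\yyy)$-degree, each component remains invariant, and it suffices to treat $g = g_0(\xxx_n,\yyy_d) \cdot \ttt^{\vec{a}}$ with $g_0 \in \QQ[\xxx_n,\yyy_d]^{\symm_d}$ homogeneous. Induct on $\deg g_0$. By Lemma~\ref{lem:vector-space-relationship}, write
\begin{equation*}
g_0 = \sum_{b \in \BBB_{n,k,d}} \lambda_b \cdot b + h, \qquad h \in J_{n,k,d}^\QQ \cap \QQ[\xxx_n,\yyy_d]^{\symm_d},
\end{equation*}
and then apply Lemma~\ref{lem:rational-invariant-generation} to express $h = \sum_{\bar{g}} H_{\bar{g}} \cdot \bar{g}$ with $H_{\bar{g}} \in \QQ[\xxx_n,\yyy_d]^{\symm_d}$ and $\bar{g}$ ranging over the generators of $J_{n,k,d}^\QQ$ from Definition~\ref{def:j-ideal-definition}. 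Each such $\bar{g}$ is either identical to the corresponding generator $\tilde{g}$ of $J_{n,k,d}^{\QQ,\ttt}$ (for the third bullet and the $e_r(\xxx_n)$-type second bullet) or, in the case of $h_r(\yyy_d)$, equal up to sign to the top $(\xxx,\yyy)$-degree component of $\tilde{g} = e_r(\ttt_k) - e_{r-1}(\ttt_k) h_1(\yyy_d) + \cdots + (-1)^r h_r(\yyy_d)$. Hence $\bar{g} - \tilde{g}$ has strictly smaller $(\xxx,\yyy)$-degree than $\bar{g}$, and the decomposition
\begin{equation*}
h \cdot \ttt^{\vec{a}} = \sum_{\bar{g}} H_{\bar{g}} \cdot \ttt^{\vec{a}} \cdot (\bar{g} - \tilde{g}) \; + \; \sum_{\bar{g}} H_{\bar{g}} \cdot \ttt^{\vec{a}} \cdot \tilde{g}
\end{equation*}
exhibits the second sum as an element of $J_{n,k,d}^{\QQ,\ttt}$ and the first as an $\symm_d$-invariant element of strictly smaller $(\xxx,\yyy)$-degree, closing the induction.

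The main obstacle is preserving $\symm_d$-invariance while moving between the $\ttt$-free ideal $J_{n,k,d}^\QQ$ (which controls top-degree behavior and supplies the basis $\BBB_{n,k,d}$) and the $\ttt$-aware ideal $J_{n,k,d}^{\QQ,\ttt}$ (which is the actual ideal in play). This is handled uniformly by Lemma~\ref{lem:rational-invariant-generation}: because all generators of both ideals are $\symm_d$-invariant, every intermediate coefficient $H_{\bar{g}}$ may be taken invariant via the Reynolds operator, so all reductions respect the invariant subring. Once $\BBB_{n,k,d}^\ttt$ is established as a $\QQ$-basis of the quotient, it is equivalent to the assertion that $\BBB_{n,k,d}$ is a $\QQ[\ttt_k]$-basis, giving the free-module conclusion with the stated rank.
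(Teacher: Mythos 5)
Your proposal is correct and follows essentially the same route as the paper: linear independence via the evaluation maps $\varepsilon_{\aalpha_k}$ and Zariski density, spanning via induction on $(\xxx,\yyy)$-degree using the correspondence between generators of $J_{n,k,d}^{\QQ}$ and $J_{n,k,d}^{\QQ,\ttt}$, with $\symm_d$-invariance maintained by the Reynolds operator. The only trifling imprecision — inherited from the paper's own phrasing — is that for the first bullet point and odd $r$, setting $t_i = 0$ gives $(-1)^r h_r(\yyy_d) = -h_r(\yyy_d)$, so one must take $\bar g - (-1)^r\tilde g$ (rather than $\bar g - \tilde g$) for the top-degree cancellation to occur; this affects nothing substantive since both generate the same ideal.
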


    \begin{proof}
        The second part of the lemma follows from the first part and Lemma~\ref{lem:vector-space-relationship}. The first part is proven in a fashion similar to Proposition~\ref{prop:t-ideal-equality}; we only sketch the argument this time.

        One first shows that $\BBB_{n,k,d}^\ttt \subseteq \QQ[\xxx_n,\yyy_d,\ttt_k]^{\symm_d}$ is linearly independent modulo 
        $$J_{n,k,d}^{\QQ,\ttt} \cap \QQ[\xxx_n,\yyy_d,\ttt_k]^{\symm_d} = \II(\Zpoints_{n,k,d}) \cap \QQ[\xxx_n,\yyy_d,\ttt_k]^{\symm_d}.$$
        However, if $f_b(\ttt_k) \in \QQ[\ttt_k]$ are polynomials such that $\sum_{b \in \BBB_{n,k,d}} f_b(\ttt_k) \cdot b \in \II(\Zpoints_{n,k,d}) \cap \QQ[\xxx_n,\yyy_d,\ttt_k]^{\symm_d}$, the argument of Proposition~\ref{prop:t-ideal-equality} involving the evaluation homomorphisms $\varepsilon_{\aalpha_k}$ shows  that $f_b(\ttt_k) \equiv 0$ for all $b \in \BBB_{n,k,d}$.

        Next, we claim that $\BBB_{n,k,d}^\ttt$ spans $\QQ[\xxx_n,\yyy_d,\ttt_k]^{\symm_d}$ modulo $J_{n,k,d}^{\QQ,\ttt} \cap \QQ[\xxx_n,\yyy_d,\ttt_k]^{\symm_d}$. The ring $\QQ[\xxx_n,\yyy_d,\ttt_k]^{\symm_d}$ is spanned by elements of the form $f(\xxx_n,\yyy_d) \cdot m_\ttt$
        where $f(\xxx_n,\yyy_d) \in \QQ[\xxx_n,\yyy_d]^{\symm_d}$ is homogeneous and $m_\ttt$ is a monomial in the $t$-variables. If $\deg f = 0$ then $f(\xxx_n,\yyy_d) \cdot m_\ttt$ clearly lies in the span of $\BBB_{n,k,d}^\ttt$. If $\deg f > 0$, Lemma~\ref{lem:t-invariant-ideal-generators} gives a one-to-one relationship between generators $g$ of $J_{n,k,d}^{\QQ,\ttt} \cap \QQ[\xxx_n,\yyy_d,\ttt_k]^{\symm_d}$ and generators $\bar{g}$ of $J_{n,k,d}^\QQ \cap \QQ[\xxx_n,\yyy_d]^{\symm_d}$ where $\bar{g}$ is obtained from $g$ by setting the $t$-variables equal to 0. One repeats the inductive argument in the proof of Proposition~\ref{prop:t-ideal-equality} to see that $f(\xxx_n,\yyy_d) \cdot m_\ttt$ lies in the span of $\BBB_{n,k,d}^\ttt$ modulo $J_{n,k,d}^{\QQ,\ttt} \cap \QQ[\xxx_n,\yyy_d,\ttt_k]^{\symm_d}$.
    \end{proof}

\section{Equivariant Cohomology Presentation}
\label{sec:Cohomology}

In this section and the next, we will need some notation related to words over the positive integers. For $d \leq k \leq n$, we write
\begin{equation}
    \WWW_{n,k,d} := \{ w: [n] \to [k] \,:\, |\im(w)| = d \}
\end{equation}
for the set of length $n$ words $[w(1), \dots, w(n)]$ over the alphabet $[k]$ in which exactly $d$ letters appear. As in Section~\ref{sec:Background}, we have
\begin{equation}
    |\WWW_{n,k,d}| = \frac{k!}{(k-d)!} \cdot \Stir(n,d).
\end{equation}
For example, the set $\WWW_{3,3,2}$ consists of the words
\begin{multline*}
[1,1,2], \, [1,2,1], \, [2,1,1], \, [1,2,2], \, [2,1,2], \, [2,2,1], \\ 
[1,1,3], \, [1,3,1], \, [3,1,1], \, [1,3,3], \, [3,1,3], \, [3,3,1], \\
[2,2,3], \, [2,3,2], \, [3,2,2], \, [2,3,3], \, [3,2,3], \, [3,3,2].
\end{multline*}
When $d = k$, we have the set
\begin{equation}
    \WWW_{n,k} := \WWW_{n,k,k}
\end{equation}
of {\em Fubini words}, or surjective maps $w: [n] \twoheadrightarrow [k]$.

\subsection{A fiber bundle} Our geometric analysis of $X_{n,k,d}$ makes heavy use of the fact that $X_{n,k,d}$ is a fiber bundle over the Grassmannian $\Gr(d,\CC^k)$. The fibers of $X_{n,k,d} \to \Gr(d,\CC^k)$ are isomorphic to the following variety.

\begin{defn}
    \label{def:xnd}
    For $n \geq d$, let $X_{n,d}$ be the moduli space of $n$-tuples $(\ell_1, \dots, \ell_n)$ of lines in $\CC^d$ which satisfy $\ell_1 + \cdots + \ell_n = \CC^d$.
\end{defn}

The space $X_{n,d}$ was first defined in \cite{PR}; we have $X_{n,d,d} = X_{n,d}$. The general relationship between the spaces $X_{n,k,d}$ and $X_{n,d}$ is as follows.

\begin{observation}
    \label{obs:fiber-bundle} The map $p: X_{n,k,d} \to \Gr(d,\CC^k)$ sending $(\ell_1, \dots, \ell_n)$ to $\ell_1 + \cdots + \ell_n$ is a fiber bundle with fiber isomorphic to $X_{n,d}$.
\end{observation}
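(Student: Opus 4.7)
The plan is to realize $X_{n,k,d}$ as an open subvariety of the $n$-fold fiber product of the projectivized tautological bundle over $\Gr(d,\CC^k)$; local triviality then follows immediately, with fiber $X_{n,d}$.

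First I would identify the fiber $p^{-1}(V)$ set-theoretically for a fixed $V \in \Gr(d,\CC^k)$. Any $(\ell_1,\dots,\ell_n) \in p^{-1}(V)$ satisfies $\ell_1 + \cdots + \ell_n = V$, so each $\ell_i$ is a line contained in $V$. Hence $p^{-1}(V)$ is exactly the set of $n$-tuples of lines in $V$ whose sum is all of $V$, which under any isomorphism $V \cong \CC^d$ is identified with $X_{n,d}$.

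To promote this to local triviality, let $\pi \colon \PP(\UUU_d) \to \Gr(d,\CC^k)$ denote the projectivization of the tautological bundle. This is a Zariski-locally trivial $\PP^{d-1}$-bundle: on each standard affine chart $U_I \subseteq \Gr(d,\CC^k)$ indexed by a $d$-subset $I \subseteq [k]$ the bundle $\UUU_d$ trivializes, and any such trivialization induces one of $\PP(\UUU_d)$. The $n$-fold fiber product
\[
E \,:=\, \PP(\UUU_d) \times_{\Gr(d,\CC^k)} \cdots \times_{\Gr(d,\CC^k)} \PP(\UUU_d)
\]
is therefore a fiber bundle over $\Gr(d,\CC^k)$ with fiber $(\PP^{d-1})^n$. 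Consider the morphism
\[
\phi \colon X_{n,k,d} \longrightarrow E, \qquad (\ell_1,\dots,\ell_n) \longmapsto (\ell_1+\cdots+\ell_n;\, \ell_1,\dots,\ell_n).
\]
Its image is the open subvariety $E^{\circ} \subseteq E$ cut out by the spanning condition on the fibers, and $\phi$ is an isomorphism onto $E^{\circ}$ with inverse the forgetful map $(V;\ell_1,\dots,\ell_n) \mapsto (\ell_1,\dots,\ell_n)$. A trivialization $E|_{U_I} \cong U_I \times (\PP^{d-1})^n$ then restricts to $E^{\circ}|_{U_I} \cong U_I \times X_{n,d}$, because the spanning condition depends only on the fiber coordinates; composing with $\phi^{-1}$ yields the desired local trivialization of $p$.

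The main thing to verify is that $\phi$ is truly a morphism of varieties, i.e., that $V = \ell_1 + \cdots + \ell_n$ depends algebraically on $\ell_\bullet$. Locally on $X_{n,k,d}$ one can choose algebraic nonvanishing sections $v_i$ of the pullback line bundle $\LLL_i$ and assemble them into a $k \times n$ matrix whose column span has constant dimension $d$ on all of $X_{n,k,d}$; the map to $\Gr(d,\CC^k)$ recording this span (given on each affine chart by appropriate $d \times d$ Plücker minors) is then regular. Everything else is a routine unwinding of definitions.
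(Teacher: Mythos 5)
Your argument is correct. The paper states this as an \emph{Observation} and supplies no proof of its own, so there is no official argument to compare against, but your construction --- realizing $X_{n,k,d}$ as the open subvariety of the $n$-fold fiber product $\PP(\UUU_d)\times_{\Gr(d,\CC^k)}\cdots\times_{\Gr(d,\CC^k)}\PP(\UUU_d)$ where the spanning condition holds, and inheriting local trivializations from the standard Pl\"ucker charts of $\Gr(d,\CC^k)$ --- is a clean and complete justification. The only point worth tightening in a write-up is the regularity of $p$: rather than ``choosing nonvanishing sections of $\LLL_i$,'' one can cover $X_{n,k,d}$ by the opens $X_J$ (for $J\subseteq[n]$, $|J|=d$) on which the lines $\{\ell_j : j\in J\}$ are independent, and on $X_J$ express $p$ via the Pl\"ucker coordinates $\bigl(\det A_{I,J}\bigr)_{I\subseteq[k],\,|I|=d}$ of a matrix $A$ whose columns span the $\ell_i$; these patches agree on overlaps, so $p$ is a morphism. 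With that clarification your proof stands.
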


The rank $k$ torus $T = (\CC^*)^k$ acts on both $X_{n,k,d}$ and $\Gr(d,\CC^k)$.
Observation~\ref{obs:fiber-bundle} and the Leray-Hirsch Theorem have the following consequences for ordinary and equivariant cohomology.

\begin{lemma}
    \label{lem:leray-hirsch-decomposition}
    We have an isomorphism of $H^*(\Gr(d,\CC^k);\ZZ)$-modules
    $$H^*(X_{n,k,d};\ZZ) \cong H^*(\Gr(d,\CC^k);\ZZ) \otimes_\ZZ H^*(X_{n,d};\ZZ)$$
    and an isomorphism of $H^*_T(\Gr(d,\CC^k);\ZZ)$-modules
    $$H^*_T(X_{n,k,d};\ZZ) \cong H^*_T(\Gr(d,\CC^k);\ZZ) \otimes_\ZZ H^*(X_{n,d};\ZZ)$$
\end{lemma}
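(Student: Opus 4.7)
The plan is to apply the Leray-Hirsch Theorem (Theorem~\ref{thm:leray-hirsch}) to the fiber bundle $p: X_{n,k,d} \to \Gr(d,\CC^k)$ of Observation~\ref{obs:fiber-bundle}, whose fiber is $X_{n,d}$. Two hypotheses must be verified: freeness of $H^*(X_{n,d};\ZZ)$ in each degree, and the existence of cohomology classes on the total space whose restriction to the fiber gives a $\ZZ$-basis of $H^*(X_{n,d};\ZZ)$.

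For freeness, I would invoke the affine paving of $X_{n,d}$ constructed by Pawlowski-Rhoades \cite{PR}, whose cells are naturally indexed by the Fubini words $\WWW_{n,d}$. Lemma~\ref{lem:affine-paving-cohomology} then yields that $H^*(X_{n,d};\ZZ)$ is a free $\ZZ$-module with a basis given by closures of these affine cells. For the second hypothesis, the key observation is that the tautological line bundles $\LLL_i$ over $X_{n,k,d}$ (with fiber $\ell_i$ over $\ell_\bullet$) restrict along the fiber inclusion $\iota: X_{n,d} \hookrightarrow X_{n,k,d}$ to the analogous tautological line bundles $\LLL_i$ on $X_{n,d}$. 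Consequently the Chern classes $x_i := c_1(\LLL_i) \in H^2(X_{n,k,d};\ZZ)$ restrict to their fiber counterparts. By the presentation of $H^*(X_{n,d};\ZZ)$ from \cite{PR}, there is a $\ZZ$-basis of the fiber cohomology consisting of monomials in the $x_i$; lifting each such basis monomial to the corresponding monomial in $H^*(X_{n,k,d};\ZZ)$ produces the required system of classes $c_j$. Leray-Hirsch then delivers the asserted isomorphism of $H^*(\Gr(d,\CC^k);\ZZ)$-modules.

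For the equivariant statement, I would apply the same argument to the Borel construction. The $T$-action on $X_{n,k,d}$ is compatible with $p$, so we obtain a fiber bundle
\begin{equation}
X_{n,d} \longrightarrow ET \times_T X_{n,k,d} \longrightarrow ET \times_T \Gr(d,\CC^k)
\end{equation}
with the same fiber $X_{n,d}$ (on which $T$ acts trivially up to homotopy once one fixes a base flag). The equivariant Chern classes $x_i = c_1^T(\LLL_i) \in H^2_T(X_{n,k,d};\ZZ)$ restrict to the ordinary Chern classes $c_1(\LLL_i) \in H^2(X_{n,d};\ZZ)$ under $\iota^*$, so the lifted monomials in the $x_i$ again restrict to a $\ZZ$-basis of $H^*(X_{n,d};\ZZ)$. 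Leray-Hirsch applied to this bundle yields the second isomorphism of $H^*_T(\Gr(d,\CC^k);\ZZ)$-modules.

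The main obstacle is not the formal invocation of Leray-Hirsch but rather the verification that a basis of $H^*(X_{n,d};\ZZ)$ can be chosen to lift to $H^*(X_{n,k,d};\ZZ)$ via classes that are globally defined on the total space. Using monomials in the Chern classes of the extended line bundles $\LLL_i$ circumvents this by producing a lift automatically; the paving-based basis of \cite{PR} guarantees that such monomials suffice. A secondary technical point is ensuring that the identification of the fiber of $p$ with $X_{n,d}$ is consistent enough to make the restriction $\iota^* \LLL_i \cong \LLL_i$ hold canonically, which follows directly from the defining property of $\LLL_i$ as the line bundle whose fiber over $\ell_\bullet$ is the $i$th line.
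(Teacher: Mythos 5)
Your proof is correct and follows essentially the same approach as the paper: applying Leray--Hirsch to the fiber bundle $p : X_{n,k,d} \to \Gr(d,\CC^k)$, verifying freeness via the Pawlowski--Rhoades affine paving of $X_{n,d}$, lifting a $\ZZ$-basis of the fiber cohomology via Chern classes of the $\LLL_i$, and reusing the same argument on the Borel construction for the equivariant statement. The only (cosmetic) difference is your choice of basis: you lift the substaircase monomial basis, while the paper lifts the Pawlowski--Rhoades Schubert polynomial classes $\mathfrak{S}_w(\xxx_n)$ -- both are valid homogeneous $\ZZ$-bases that restrict correctly to the fiber.
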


\begin{proof}
 First we show the non-equivariant case. It suffices to check that the two conditions in the Leray-Hirsch Theorem (Theorem \ref{thm:leray-hirsch}) are satisfied. Pawlowski and Rhoades \cite{PR} obtained a family of explicitly defined polynomials $\mathfrak{S}_w(\mathbf{x}_n) \in \mathbb{Z}_{\geq 0}[\mathbf{x}_n]$ for Fubini words $w\in \mathcal{W}_{n,d}$ which descend to a homogeneous $\ZZ$-basis of the quotient ring
 \begin{equation}
     \ZZ[\xxx_n]/(x_1^d, x_2^d, \dots, x_n^d, e_n(\xxx_n), e_{n-1}(\xxx_n), \dots, e_{n-d+1}(\xxx_n))
 \end{equation}
 which is canonically identified with the cohomology ring $H^*(X_{n,d};\ZZ)$. Here in $H^*(X_{n,d};\ZZ)$ we identify $x_i$ with the first Chern class of the dual of the $i$-th tautological line bundle over $X_{n,d}$. In particular, we know that $H^*(X_{n,d};\ZZ)$ is a free $\ZZ$-module so that the first condition is satisfied. Let $\mathcal{L}_i$ be the $i$-th tautological line bundle over $X_{n,k,d}$ and $b_i:=c_1(\mathcal{L}_i)$. Consider the classes $\sigma_w, w\in \mathcal{W}_{n,d}$ in $H^*(X_{n,k,d};\ZZ)$ given by $\sigma_w = \mathfrak{S}_w(\mathbf{x}_n)|_{\mathbf{x}_n=-\mathbf{b}_n}$ where $\mathbf{b}_n=(b_1,\dots, b_n)$. 

Let $X_n(U_d)$ be the fiber of the projection $X_{n,k,d} \to \Gr(d,k)$ at each $U_d \in \Gr(d,k)$. The space $X_n(U_d)$ is a subspace of the $n$-fold product $\PP(U_d) \times \cdots \times \PP(U_d)$ that fits in the following fiber diagram
\[
\xymatrix{
\PP(U_d) \times \cdots \times \PP(U_d) \ar[r] & \PP(\CC^k) \times \cdots \times \PP(\CC^k)&\\
X_n(U_d) \ar[r]_f\ar[u]_\iota & X_{n,k,d}  \ar[u] \ar[r]_p & \Gr(d,k)
}
\] 
where $f$ is the inclusion of the fiber.  The pullback $f^*(\mathcal{L}_i)$ of $\mathcal{L}_i$ is the $i$-tautological line bundle over $X_n(U_d)$ with respect to the natural inclusion denoted by $\iota$ in the diagram.  Thus we have
\[
f^*(\sigma_w) = f^*(\mathfrak{S}_w(-b_1,\dots, -b_n))=\mathfrak{S}_w(c_1(f^*(\mathcal{L}_1^*)),\dots, c_1(f^*(\mathcal{L}_n^*))),  
\]
and see that $f^*(\sigma_w), w\in \mathcal{W}_{n,d}$ form a homogeneous basis of $H^*(X_n(U_d);\ZZ)$ by the fact mentioned in the first paragraph in this proof. Thus the second condition for the Leray-Hirsch Theorem holds.

For the equivariant setting, we consider the fiber bundle enhanced by the Borel construction
\[
X_n(U_d) \stackrel{f}{\to} ET \times_TX_{n,k,d} \to ET\times_T\Gr(d,k).  
\]
Note that the fiber is still $X_n(U_d)$ and we denote the inclusion of the fiber by $f$ again. We replace $b_i$ with the $T$-equivariant Chern class $c_1^T(\mathcal{L}_i)$ and consider the $T$-equivariant classes $\sigma_w$. The pullbacks $f^*\sigma_w, w\in \mathcal{W}_{n,d}$ form a $\ZZ$-basis of $H^*(X_n(U_d);\ZZ)$ so that the second condition still holds in the equivariant setting. This completes the proof.
\end{proof}

We want to describe the $\ZZ[\ttt_k]$-module structure of $H^*_T(X_{n,k,d};\ZZ)$. To do this, we recall a geometric result of Pawlowski and Rhoades \cite{PR}. Let $\MMM_{n,k}$ be the space of $k \times n$ complex matrices with no zero columns. The space $\MMM_{n,k}$ carries a left action of $T = (\CC^*)^k$ by row scaling and a right action of $(\CC^*)^n$ by column scaling.  There is a natural identification
\begin{equation}
    \MMM_{n,k}/(\CC^*)^n = (\PP^{k-1})^n
\end{equation}
given by associating a matrix with columns $A = \begin{pmatrix} v_1 & \cdots & v_n \end{pmatrix}$ to the tuple of lines $(\ell_1, \dots, \ell_n)$ where $\ell_i$ is spanned by $v_i$.

 Let $w: [n] \to [k]$ be a word. A position $1 \leq j \leq n$ is called {\em initial} if $w(j') \leq w(j)$ for $j < j'$. Write 
\begin{equation}
    \initial(w) := \{ 1 \leq j \leq n \,:\, \text{$j$ is initial for $w$} \}
\end{equation}
for the initial positions of $w$.  For example, we have $\initial(21231) = \{1,2,4\}$. The {\em pattern matrix} $\PM(w)$ is the $k \times n$ matrix over $\{0,1,\star\}$ with entries $\PM(w)_{i,j}$ given as follows.
\begin{itemize}
    \item We have $\PM(w)_{i,j} = 1$ if and only if $w(j) = i$.
    \item Suppose $j \in \initial(w)$ and $w(j) \neq i$. If $w(j) > i$ and there exists $j' < j$ with $w(j') = i$ then $\PM(w)_{i,j} = \star$. Otherwise $\PM(w)_{i,j} = 0$.
    \item Suppose $j \notin \initial(w)$ and $w(j) \neq i$. If the first occurrence of $i$ in $[w(1), \dots, w(n)]$ is before the first occurrence of $w(j)$, then $\PM(w)_{i,j} = \star$. Otherwise $\PM(w)_{i,j}= 0$.
\end{itemize}
For example, if $n = 6$ and $k = 4$ we have
$$\PM(441422) = 
\begin{pmatrix} 
0 & 0 & 1 & 0 & \star & \star \\
0 & 0 & 0 & 0 & 1 & 1 \\
0 & 0 & 0 & 0 & 0 & 0 \\
1 & 1 & 0 & 1 & 0 & \star
\end{pmatrix}.$$
Let $U(w) \subseteq GL_k(\CC)$ be the group of lower triangular matrices with 1's on the diagonal and 0's in any off-diagonal positions of a column $j$ with $j \not\in \im(w)$. In the above example we have
$$U(441422) = \left\{ \begin{pmatrix} 1 & 0 & 0 & 0 \\ * & 1  & 0 & 0 
\\ * & *& 1 & 0 \\ * & * & 0 & 1 \end{pmatrix} \right\}$$
where the $*$'s are complex numbers.

For any word $w: [n] \to [k]$, let $\widehat{C}_w \subseteq \MMM_{n,k}$ be the affine space of matrices obtained by replacing the $\star$'s in $\PM(w)$ with complex numbers. We define $C_w \subseteq (\PP^{k-1})^n$ by
\begin{equation}
    C_w := U(w) \cdot  \widehat{C}_w \cdot (\CC^*)^n / (\CC^*)^n.
\end{equation}
Observe that $C_w \subseteq X_{n,k,d}$ if and only if $w \in \WWW_{n,k,d}$. In fact, we have the following result, essentially proven in \cite{PR}.

\begin{lemma}
    \label{lem:affine-paving}
    The set $\{C_w \,:\, w \in \WWW_{n,k,d} \}$ forms the cells of an affine paving of $X_{n,k,d}$. The cells $C_w$ are closed under the left action of $T = (\CC^*)^k$.
\end{lemma}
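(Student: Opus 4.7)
The plan is to build on the analogous construction of Pawlowski--Rhoades~\cite{PR}, who treated the case $d = k$ in detail, and verify that the construction adapts to the locally closed stratum $X_{n,k,d}$ for general $d \leq k$. The first step is to establish the set-theoretic decomposition $X_{n,k,d} = \bigsqcup_{w \in \WWW_{n,k,d}} C_w$. Given a tuple $\ell_\bullet = (\ell_1, \ldots, \ell_n) \in X_{n,k,d}$ with a representative matrix $A \in \MMM_{n,k}$, left-multiplication by an appropriate element of the unipotent lower-triangular group followed by column rescaling places $A$ into a uniquely determined normal form matching $\PM(w)$ for some $w \colon [n] \to [k]$. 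Since the designated $1$'s of $\PM(w)$ occupy exactly $\dim(\ell_1 + \cdots + \ell_n) = d$ distinct rows, we have $w \in \WWW_{n,k,d}$. Combining the free $(\CC^*)^n$ column-scaling and the residual $U(w)$-action exhibits each $C_w$ as an affine cell of dimension equal to the number of $\star$-entries of $\PM(w)$.

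For the filtration by closed subvarieties, I would choose any total order $w_1 \prec w_2 \prec \cdots$ on $\WWW_{n,k,d}$ refining the closure partial order on cells, i.e., $w' \preceq w$ whenever $C_{w'}$ lies in the Zariski closure of $C_w$ taken inside $X_{n,k,d}$. The cells contained in this closure arise from those specializations of $\widehat{C}_w$ (setting selected $\star$-entries to zero and re-reducing to normal form) which preserve the span dimension at $d$; dimension-dropping specializations land in $X_{n,k,d'}$ for some $d' < d$ and are thus discarded. Setting $X_i := \bigsqcup_{j \leq i} C_{w_j}$ then yields a filtration of $X_{n,k,d}$ by closed subvarieties whose successive differences $X_i - X_{i-1} = C_{w_i}$ are affine, giving the desired paving.

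Finally, $T$-invariance of each cell follows from a direct calculation: left-multiplying $A \in \widehat{C}_w$ by $t = \mathrm{diag}(t_1, \ldots, t_k)$ scales the designated $1$ in column $j$ (row $w(j)$) to $t_{w(j)}$, and rescaling column $j$ by $t_{w(j)}^{-1}$ using the $(\CC^*)^n$-quotient returns the pattern to standard form, merely rescaling the $\star$-entries by ratios $t_i / t_{w(j)}$. Since conjugation by $T$ also preserves $U(w)$ (lower-triangular matrices whose nonzero off-diagonal entries lie in columns indexed by $\im(w)$), we conclude that $t \cdot C_w = C_w$. The main obstacle is the closure description underpinning the second step; this is essentially carried out in \cite{PR} for $d = k$, and the extension to general $d$ requires only bookkeeping for the rows indexed by $[k] \setminus \im(w)$, which contain no pivots but otherwise play no active role.
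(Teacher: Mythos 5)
Your overall architecture is in the right spirit, but the way you set up the filtration differs from the paper and leaves a gap. The paper does not build a closure order on the cells of $X_{n,k,d}$ from scratch. It invokes the fact (established in the proof of \cite[Lem.~5.7]{PR}) that the collection $\{C_w : w\colon [n]\to [k]\}$ over \emph{all} words pave the ambient $(\PP^{k-1})^n$ with a filtration order that refines the partial order by image size $|\im(w)|$. It then simply restricts that ambient filtration: the union of all cells with $|\im(w)| < d$ is closed in $(\PP^{k-1})^n$, as is the union over $|\im(w)| \leq d$, and their set difference is $X_{n,k,d}$; intersecting the ambient filtration steps with $X_{n,k,d}$ immediately gives a filtration of $X_{n,k,d}$ by closed subvarieties with affine successive differences.

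Your version instead posits a total order on $\WWW_{n,k,d}$ ``refining the closure partial order'' and sets $X_i = \bigsqcup_{j \leq i} C_{w_j}$. For this to give a filtration by closed subvarieties you need that the closure of each $C_w$ taken inside $X_{n,k,d}$ is a union of cells $C_{w'}$ with $w' \in \WWW_{n,k,d}$, and that the resulting relation is genuinely a partial order. Neither is obviously inherited from what \cite{PR} proves as cited: an affine paving guarantees closedness of the filtration pieces, not that each individual cell closure is a union of cells. Moreover, your description of the closure of $C_w$ as arising from ``setting selected $\star$-entries to zero and re-reducing to normal form'' is not accurate --- closures also involve degenerations where $\star$-entries escape to infinity and where the residual $U(w)$-action and the $(\CC^*)^n$-quotient interact nontrivially, and this is precisely what makes \cite[Lem.~5.7]{PR} nontrivial. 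Finally, a small factual point: you attribute to \cite{PR} only ``the case $d = k$,'' but the ambient paving of $(\PP^{k-1})^n$ in \cite[Lem.~5.7]{PR} already handles cells for words of every image size, which is exactly what the paper's shortcut exploits. Your $T$-invariance argument is fine and matches the paper's.
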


\begin{proof}
    It follows from the definitions that $C_w$ is closed under the left action of $T$ by row scaling. The fact that $C_w$ is isomorphic to an affine space is \cite[Lem. 5.6]{PR}. In \cite[Lem. 5.7]{PR} it is proven that the collection of cells $\{C_w \,:\, w: [n] \to [k] \}$ indexed by all words $w: [n] \to [k]$ gives an affine paving of $(\PP^{k-1})^n$. Furthermore, the affine paving constructed in the proof of \cite[Lem. 5.7]{PR} has an order of cells which refines the partial order 
    \begin{center}$w \prec w'$ if and only if $|\im(w)| < |\im(w')| $.
\end{center}The set of cells  $\{C_w \,:\, w \in \WWW_{n,k,d} \}$ corresponding to words with image size $d$ therefore gives an affine paving of $X_{n,k,d}$.
\end{proof}

Lemma~\ref{lem:affine-paving-cohomology} and the above result imply that the cell closures $\{\overline{C}_w \,:\, w \in \WWW_{n,k,d}\}$ induce a $\ZZ$-basis for $H^*(X_{n,k,d};\ZZ)$. Similarly, by Lemma~\ref{lem:affine-paving-equivariant} the equivariant classes of these cell closures are a $\ZZ[\ttt_k]$-basis for $H^*_T(X_{n,k,d};\ZZ)$. Since the set $\WWW_{n,k,d}$ has 
$\frac{k!}{(k-d)!} \cdot \Stir(n,d)$ elements, the following result is immediate.

\begin{lemma}
    \label{lem:cohomology-ring-ranks} Let $d \leq k \leq n$.
    The ordinary cohomology ring $H^*(X_{n,k,d};\ZZ)$ is a free $\ZZ$-module of rank $\frac{k!}{(k-d)!} \cdot \Stir(n,d)$. The equivariant cohomology ring $H^*_T(X_{n,k,d};\ZZ)$ is a free $\ZZ[\ttt_k]$-module of rank $\frac{k!}{(k-d)!} \cdot \Stir(n,d)$.
\end{lemma}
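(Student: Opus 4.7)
The proof plan is essentially a direct application of the machinery assembled in the preceding subsection; indeed, the paper itself remarks that the result is immediate. Here is how I would organize the argument.

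First I would invoke Lemma~\ref{lem:affine-paving}, which asserts that the locally closed subsets $\{C_w : w \in \WWW_{n,k,d}\}$ form the cells of an affine paving of $X_{n,k,d}$ and that each cell is stable under the left action of $T = (\CC^*)^k$. Since $X_{n,k,d}$ is a smooth complex variety (it is a locally closed subvariety of $(\PP^{k-1})^n$, open inside its closure, and in fact an iterated affine bundle by the paving), the cell closures $\overline{C}_w$ carry well-defined classes in both $H^*(X_{n,k,d};\ZZ)$ and $H^*_T(X_{n,k,d};\ZZ)$.

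Next I would apply Lemma~\ref{lem:affine-paving-cohomology} to conclude that $\{[\overline{C}_w] : w \in \WWW_{n,k,d}\}$ is a free $\ZZ$-basis of $H^*(X_{n,k,d};\ZZ)$, and Lemma~\ref{lem:affine-paving-equivariant} (which upgrades the non-equivariant statement to the equivariant setting using $T$-invariance of the paving) to conclude that the same set of classes is a free $\ZZ[\ttt_k]$-basis of $H^*_T(X_{n,k,d};\ZZ)$.

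Finally, to obtain the stated rank, I would count $|\WWW_{n,k,d}|$: elements of $\WWW_{n,k,d}$ are functions $w:[n] \to [k]$ whose image has size exactly $d$. Such a function is determined by choosing a $d$-element image set ($\binom{k}{d}$ choices), an ordering of that image (to convert Stirling-counted set partitions into surjections — equivalently one counts surjections $[n] \twoheadrightarrow [d]$, giving $d! \cdot \Stir(n,d)$). Putting these together,
\begin{equation}
|\WWW_{n,k,d}| = \binom{k}{d} \cdot d! \cdot \Stir(n,d) = \frac{k!}{(k-d)!} \cdot \Stir(n,d),
\end{equation}
matching the formula already recorded at the top of Section~\ref{sec:Cohomology}.

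There is no real obstacle in this argument: the nontrivial geometric inputs (construction of the paving and its $T$-invariance) have been packaged into Lemma~\ref{lem:affine-paving}, and the Borel-construction formalism is packaged into Lemma~\ref{lem:affine-paving-equivariant}. If I had to flag the subtlest point, it would be verifying that the paving from \cite{PR} really is compatible with the partial order by image size so that restricting to words in $\WWW_{n,k,d}$ still yields an affine paving of the locally closed subvariety $X_{n,k,d}$ — but this is already handled in the proof of Lemma~\ref{lem:affine-paving}, so nothing new is needed here.
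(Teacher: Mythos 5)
Your proof is correct and follows the paper's argument exactly: invoke the $T$-invariant affine paving from Lemma~\ref{lem:affine-paving}, apply Lemmas~\ref{lem:affine-paving-cohomology} and \ref{lem:affine-paving-equivariant} to get free bases indexed by $\WWW_{n,k,d}$, and then count $|\WWW_{n,k,d}| = \frac{k!}{(k-d)!}\cdot\Stir(n,d)$.
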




The fiber bundle $p: X_{n,k,d} \to \Gr(d,\CC^k)$ in Observation~\ref{obs:fiber-bundle} gives a generating set of $H^*_T(X_{n,k,d};\ZZ)$ as follows. Let $\UUU_d$ be the rank $d$ tautological vector bundle over $\Gr(d,\CC^k)$ whose fiber over a subspace $V \in \Gr(d,\CC^k)$ is the space $V$ itself. Also, for $1 \leq i \leq n$, let $\LLL_i$ be the line bundle over $X_{n,k,d}$ whose fiber over $\ell_\bullet = (\ell_1, \dots, \ell_n)$ is the line $\ell_i$.  

\begin{lemma}
    \label{lem:ring-generation}
    The ordinary cohomology ring $H^*(X_{n,k,d};\ZZ)$ is generated as a $\ZZ$-algebra by the Chern classes 
    $$p^*(c_1(\UUU_d)), p^*(c_2(\UUU_d)), \dots, p^*(c_d(\UUU_d)) \quad \text{and} \quad c_1(\LLL_1), c_1(\LLL_2), \dots, c_1(\LLL_n).$$
    The equivariant cohomology ring $H^*_T(X_{n,k,d};\ZZ)$ is generated as a $\ZZ[\ttt_k]$-algebra by the equivariant Chern classes 
    $$p^*(c_1^T(\UUU_d)), p^*(c_2^T(\UUU_d)), \dots, p^*(c_d^T(\UUU_d)) \quad \text{and} \quad c_1^T(\LLL_1), c_1^T(\LLL_2), \dots, c_1^T(\LLL_n).$$
\end{lemma}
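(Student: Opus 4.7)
The plan is to combine the Leray--Hirsch decomposition of Lemma~\ref{lem:leray-hirsch-decomposition} with the classical Borel presentation of the (equivariant) cohomology of the Grassmannian. I would prove the ordinary statement first, and then observe that the same argument works verbatim in the equivariant setting.

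For the ordinary case, recall that the proof of Lemma~\ref{lem:leray-hirsch-decomposition} identifies a specific set of Leray--Hirsch classes: for each Fubini word $w \in \WWW_{n,d}$, the class $\sigma_w = \mathfrak{S}_w(-c_1(\LLL_1),\dots,-c_1(\LLL_n)) \in H^*(X_{n,k,d};\ZZ)$ restricts to a homogeneous basis on each fiber $X_n(U_d) \cong X_{n,d}$. By Leray--Hirsch these classes form a basis of $H^*(X_{n,k,d};\ZZ)$ as a module over $p^*H^*(\Gr(d,\CC^k);\ZZ)$. In particular every class in $H^*(X_{n,k,d};\ZZ)$ is a $p^*H^*(\Gr(d,\CC^k);\ZZ)$-linear combination of polynomials in $c_1(\LLL_1),\dots,c_1(\LLL_n)$. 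It therefore suffices to know that $H^*(\Gr(d,\CC^k);\ZZ)$ is itself generated as a $\ZZ$-algebra by $c_1(\UUU_d),\dots,c_d(\UUU_d)$; this is the classical Borel presentation of the cohomology of the Grassmannian over the integers. Pulling back under $p^*$ then gives the desired $\ZZ$-algebra generating set $\{p^*c_i(\UUU_d)\} \cup \{c_1(\LLL_j)\}$ for $H^*(X_{n,k,d};\ZZ)$.

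For the equivariant case, I would simply repeat the argument using the equivariant half of Lemma~\ref{lem:leray-hirsch-decomposition}. The same polynomials $\sigma_w$, with $c_1(\LLL_i)$ replaced by $c_1^T(\LLL_i)$, again restrict to a $\ZZ$-basis of $H^*(X_{n,d};\ZZ)$ on each fiber, so by equivariant Leray--Hirsch they generate $H^*_T(X_{n,k,d};\ZZ)$ as a module over $p^* H^*_T(\Gr(d,\CC^k);\ZZ)$. The remaining input is the analogous integral Borel-type presentation: $H^*_T(\Gr(d,\CC^k);\ZZ)$ is generated as a $\ZZ[\ttt_k]$-algebra by $c_1^T(\UUU_d),\dots,c_d^T(\UUU_d)$. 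This is standard in equivariant Schubert calculus (see for example Anderson--Fulton \cite{AF}), and combining it with the preceding gives the stated $\ZZ[\ttt_k]$-algebra generation of $H^*_T(X_{n,k,d};\ZZ)$.

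The main potential obstacle is a purely bookkeeping one: one must make sure that the Borel-type generation of $H^*_T(\Gr(d,\CC^k);\ZZ)$ as a $\ZZ[\ttt_k]$-algebra is available with integer (not merely rational) coefficients, and that the relationship between $\sigma_w$ and the $c_1^T(\LLL_i)$ carries over from Lemma~\ref{lem:leray-hirsch-decomposition} without sign or duality issues. Both points are routine: the integral Borel presentation for the Grassmannian is classical, and the $\sigma_w$ were defined directly as polynomial expressions in $c_1(\LLL_i)$ (respectively $c_1^T(\LLL_i)$) in the proof of Lemma~\ref{lem:leray-hirsch-decomposition}. No new geometric input is required.
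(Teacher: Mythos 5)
Your proposal is correct and follows essentially the same approach as the paper: apply the Leray--Hirsch decomposition of Lemma~\ref{lem:leray-hirsch-decomposition}, use the fact that the module generators $\sigma_w$ are polynomials in the $c_1^T(\LLL_i)$, and then invoke the (equivariant) Borel presentation of $H^*_T(\Gr(d,\CC^k);\ZZ)$ to reduce the coefficients to polynomials in the $p^*(c_i^T(\UUU_d))$. The only cosmetic difference is one of ordering: the paper proves the equivariant statement directly and deduces the ordinary one by setting $t_i = 0$, whereas you argue the two cases in parallel.
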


\begin{proof}
 Since every element in the non-equivariant cohomology is obtained from an equivariant cohomology class by setting $t_i=0$ for all $i$ (cf. (\ref{eq:ordinary-equivariant-relationship})), it suffices to prove the equivariant case.

Let $\alpha \in H_T^*(X_{n,k,d};\ZZ)$. As in the proof of Lemma \ref{lem:leray-hirsch-decomposition}, the set  $\{f^*(\sigma_w) \,:\,  w\in \mathcal{W}_{n,d}\}$ forms a $\ZZ$-basis of $H^*(X_{n,d};\ZZ)$ where we set $U_d=\CC^d \subset \CC^k$. By the Leray-Hirsch Theorem, the image of $\alpha$ under the isomorphism is given by
\[
\sum_{w \in \mathcal{W}_{n,d}} a_w \otimes f^*(\sigma_w) \ \ \in H_T^*(\Gr(d,k);\ZZ)\otimes_{\ZZ} H^*(X_{n,d};\ZZ)
\]
for some $a_w \in H_T^*(\Gr(d,k);\ZZ)$. Therefore we have
\[
\alpha = \sum_{w \in \mathcal{W}_{n,d}} p^*(a_w) \cdot \sigma_w.
\]
The claim holds, since $p^*(a_w)$ is a polynomial in $p^*(c_1^T(\UUU_d)), p^*(c_2^T(\UUU_d)), \dots, p^*(c_d^T(\UUU_d))$ and $\sigma_w$ is a polynomial in $c_1^T(\LLL_1), c_1^T(\LLL_2), \dots, c_1^T(\LLL_n)$.
\end{proof}

    The cells $\{ C_w \,:\, w \in \WWW_{n,k,d} \}$ of $X_{n,k,d}$ factor as products of cells of $\Gr(d,\CC^k)$ and cells in $X_{n,d}$. We close this subsection by explaining this factorization, with the running example $$(n,k,d) = (8,6,3) \text{ and } w = [2,2,5,2,5,4,5,4] \in \WWW_{n,k,d}.$$ Elements of $C_w = U(w) \widehat{C}_w T/T$ have the form
    $$umT  =  \begin{bmatrix}
        1 & 0 & 0 & 0 & 0 & 0 \\
        0 & 1 & 0 & 0 & 0 & 0 \\
        0 & a_{3,2} & 1 & 0 & 0 & 0 \\
        0 & a_{4,2} & 0 & 1 & 0 & 0 \\
        0 & a_{5,2} & 0 & a_{5,4} & 1 & 0 \\
        0 & a_{6,2} & 0 & a_{6,4} & a_{6,5} & 1
    \end{bmatrix} \cdot \begin{bmatrix}
        0 & 0 & 0 & 0 & 0 & 0 & 0 & 0 \\
        1 & 1 & \star & 1 & \star & \star & \star & \star \\
        0 & 0 & 0 & 0 & 0 & 0 & 0 & 0 \\
        0 & 0 & 0 & 0 & 0 & 1 & 0 & 1 \\
        0 & 0 & 1 & 0 & 1 & 0 & 1 & \star \\
        0 & 0 & 0 & 0 & 0 & 0 & 0 & 0 
    \end{bmatrix} \cdot T $$
    where the $a_{i,j}$ and $\star$'s are uniquely determined complex numbers.

    Write $\PPP_d(k-d)$ for the set of partitions $\lambda$ with $\leq d$ parts such that $\lambda_1 \leq k-d$. There is a  bijection between $\PPP_d(k-d)$ and the family ${[k] \choose d}$ of $d$-element subsets of $[k]$. If $$\lambda = (\lambda_1 \geq \lambda_2 \geq \cdots \geq \lambda_d) \in \PPP_d(k-d),$$we write $$S(\lambda) := \{ \lambda_1 + d - 1, \lambda_2 + d - 2, \dots, \lambda_d \}$$ for the corresponding subset.  If $\lambda \in \PPP_d(k-d)$, write $\mathring{X}_\lambda \subset \Gr(d,\CC^k)$ for the {\em open Schubert cell}  consisting of subspaces represented by matrices whose pivots are in the columns indexed by $S(\lambda)$.
        
     The \emph{support} of $w \in \WWW_{n,k,d}$, is $\supp(w) := \{i : w_j = i \text{ for some } j \} \subset [k]$. Let $\lambda \in \PPP_d(k-d)$ be such that $S(\lambda) = \supp(w)$. We have the following subgroups of $U(w)$:
    \begin{equation}
        U_{\gr}(w) := \{ u \in U(w): u_{i,j} = 0 \text{ if } ( j \not\in S(\lambda) \text{ or } i \in S(\lambda) ) \text{ and } i \neq j \},
    \end{equation}
    \begin{equation}
        U_{\ell}(w) := \{ u \in U(w): u_{i,j} = 0 \text{ if } ( j \not\in S(\lambda) \text{ or } i \not\in S(\lambda) ) \text{ and } i \neq j \}.
    \end{equation}
    Note that $U_{\gr}(w), U_\ell(w)$ depend only on $\supp(w)$. In our example $w = [2,2,5,2,5,4,5,4]$ we have $\supp(w) = \{2,4,5\}$ and
    $$U_\gr(w) = \left\{ \begin{bmatrix} 1 & 0 & 0 & 0 & 0 & 0 \\
    0 & 1 & 0 & 0 & 0 & 0 \\
    0 & b_{3,2} & 1 & 0 & 0 & 0 \\
    0 & 0 & 0 & 1 & 0 & 0 \\
    0 & 0 & 0 & 0 & 1 & 0 \\
    0 & b_{6,2} & 0 & b_{6,4} & b_{6,5} & 1\end{bmatrix} \right\}, \quad 
    U_\ell(w) = \left\{
    \begin{bmatrix}
        1 & 0 & 0 & 0 & 0 & 0 \\
        0 & 1 & 0 & 0 & 0 & 0 \\
        0 & 0 & 1 & 0 & 0 & 0 \\
        0 & b_{4,2} & 0 & 1 & 0 & 0 \\
        0 & b_{5,2} & 0 & b_{5,4} & 1 & 0 \\
        0 & 0 & 0 & 0 & 0 & 1 
    \end{bmatrix}
    \right\}$$
    where the $b_{i,j}$ are complex numbers. For any $w \in \WWW_{n,k,d}$, it can be shown using column operations that
    \begin{equation}
        U(w) = U_\gr(w) \cdot U_\ell(w)
    \end{equation}
    as matrix groups. Furthermore, the multiplication map $U_\gr(w) \times U_\ell(w) \to U(w)$ is bijective. If $umT \in C_w$, there exist unique $u_\gr \in U_\gr(w), u_\ell \in U_\ell(w)$ such that
    \begin{equation}
        umT = u_\gr u_\ell m T.
    \end{equation}

    If $S(\lambda) = \supp(w) = \{i_1 < \cdots < i_d\}$, let $m_\lambda$ be the $k \times d$ matrix with 1's at positions $(i_j,j)$ and 0's elsewhere. In our example we have 
    $$m_\lambda = \begin{bmatrix}
        0 & 0 & 0 \\
        1 & 0 & 0 \\
        0 & 0 & 0 \\
        0 & 1 & 0 \\
        0 & 0 & 1 \\
        0 & 0 & 0
    \end{bmatrix}.$$
    Given $u m T = u_\gr u_\ell m T \in C_w$, we may factor $m$ uniquely as
    $m = m_\lambda \cdot m'$ where $m' \in \MMM_{n,d}$. In our example, this factorization has the form
$$   
        m = \begin{bmatrix}
        0 & 0 & 0 & 0 & 0 & 0 & 0 & 0 \\
        1 & 1 & \star & 1 & \star & \star & \star & \star \\
        0 & 0 & 0 & 0 & 0 & 0 & 0 & 0 \\
        0 & 0 & 0 & 0 & 0 & 1 & 0 & 1 \\
        0 & 0 & 1 & 0 & 1 & 0 & 1 & \star \\
        0 & 0 & 0 & 0 & 0 & 0 & 0 & 0 
    \end{bmatrix} = 
    \begin{bmatrix}
        0 & 0 & 0 \\
        1 & 0 & 0 \\
        0 & 0 & 0 \\
        0 & 1 & 0 \\
        0 & 0 & 1 \\
        0 & 0 & 0
    \end{bmatrix} \cdot 
    \begin{bmatrix}
        1 & 1 & \star & 1 & \star & \star & \star & \star \\
        0 & 0 & 0 & 0 & 0 & 1 & 0 & 1 \\
        0 & 0 & 1 & 0 & 1 & 0 & 1 & \star 
    \end{bmatrix} = m_\lambda \cdot m'.
$$  
Let $w' \in \WWW_{n,d}$ be the word obtained from $w$ by replacing the smallest letter with 1, the next smallest letter with 2, and so on. In our running example we have 
$$w = [2,2,5,2,5,4,5,4] \, \Rightarrow \, w' = [1,1,3,1,3,2,3,2].$$ The matrix $m'$ in the factorization $m = m_\lambda \cdot m'$ fits the pattern of $w'$.

On the other hand, the product $u_\ell m_\lambda$ may be expressed as $u_\ell m_\lambda = m_\lambda u_\ell'$ for a unique $u_\ell' \in U_d$ obtained by `compressing' the matrix $u_\ell$. In our example this has the form
$$
u_\ell \cdot m_\lambda = \begin{bmatrix}
        1 & 0 & 0 & 0 & 0 & 0 \\
        0 & 1 & 0 & 0 & 0 & 0 \\
        0 & 0 & 1 & 0 & 0 & 0 \\
        0 & b_{4,2} & 0 & 1 & 0 & 0 \\
        0 & b_{5,2} & 0 & b_{5,4} & 1 & 0 \\
        0 & 0 & 0 & 0 & 0 & 1 
    \end{bmatrix} \cdot 
    \begin{bmatrix}
        0 & 0 & 0 \\
        1 & 0 & 0 \\
        0 & 0 & 0 \\
        0 & 1 & 0 \\
        0 & 0 & 1 \\
        0 & 0 & 0
    \end{bmatrix} = 
    \begin{bmatrix}
        0 & 0 & 0 \\
        1 & 0 & 0 \\
        0 & 0 & 0 \\
        0 & 1 & 0 \\
        0 & 0 & 1 \\
        0 & 0 & 0
    \end{bmatrix} \cdot
    \begin{bmatrix}
        1 & 0 & 0 \\
        b_{4,2} & 1 & 0 \\
        b_{5,2} & b_{5,4} & 1 
    \end{bmatrix} = m_\lambda \cdot u'_\ell.
$$
   Putting these matrix identities together, for $um T \in C_w$, we have the equality
\begin{equation}
\label{eq:product-factorization}
    u \cdot m = u_\gr \cdot u_\ell \cdot m_\lambda \cdot m' = (u_\gr \cdot m_\lambda) \cdot (u_\ell' \cdot m').
\end{equation}
The product $u_\gr \cdot m_\lambda$ represents a point in $\mathring{X}_\lambda$ and the product $u_\ell' \cdot m'$ represents a point in $C_w'$. The identity \eqref{eq:product-factorization} therefore leads to an isomorphism
\begin{equation}
\label{eq:cell-product}
    C_w \cong \mathring{X}_\lambda \times C_{w'}.
\end{equation}
Although we have no further use for the isomorphism \eqref{eq:cell-product} in this paper, we are hopeful that it will assist in the study of $X_{n,k,d}$, for example in finding cohomology representatives for the closures of the $C_w$.

\subsection{Chern class relations and rational cohomology} Lemma~\ref{lem:ring-generation} gives a generating set for the equivariant cohomology of $X_{n,k,d}$. In this subsection we use maps between vector bundles to give relations between these generators. If $\EEE \to X$ is a $T$-equivariant vector bundle of rank $r$ and $f$ is a symmetric polynomial in $r$ variables, we write $f(\EEE) \in H^*_T(X_{n,k,d};\ZZ)$ for the evaluation of $f$ at the Chern roots of $\EEE$. 

\begin{lemma}
    \label{lem:chern-class-relations}
    For $1 \leq i \leq n$, let $x_i := c_1^T(\LLL_i) \in H^2_T(X_{n,k,d};\ZZ)$. In the equivariant cohomology ring $H^*_T(X_{n,k,d};\ZZ)$ we have the relations
    \begin{itemize}
        \item $e_r(\ttt_k) - e_{r-1}(\ttt_k) h_1(p^*(\UUU_d)) + \cdots + (-1)^r h_r(p^*(\UUU_d)) = 0$ for all $r > k-d$,
        \item $e_r(\xxx_n) - e_{r-1}(\xxx_n) h_1(p^*(\UUU_d)) + \cdots + (-1)^r h_r(p^*(\UUU_d)) = 0$ for all $r > n-d$, and
        \item $x_i^d - x_i^{d-1} e_1(p^*(\UUU_d)) + \cdots + (-1)^r e_d(p^*(\UUU_d)) = 0$ for $i = 1, \dots, n$.
    \end{itemize}
\end{lemma}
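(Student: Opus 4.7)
The plan is to obtain each of the three families of relations from a short exact sequence of equivariant vector bundles and the Whitney sum formula $c^T(\EEE_2) = c^T(\EEE_1) \cdot c^T(\EEE_3)$, then read off vanishing of Chern classes above the rank of the quotient or kernel.

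\textbf{Relation 1 (relations in $\ttt_k$ and $p^*\UUU_d$).} On $\Gr(d,\CC^k)$ we have the tautological short exact sequence
\begin{equation}
 0 \to \UUU_d \to \CC^k \to \CC^k/\UUU_d \to 0,
\end{equation}
where $\CC^k$ denotes the trivial rank $k$ bundle with the standard $T$-action, whose equivariant total Chern class is $\prod_{i=1}^{k}(1+t_i u)$. Pulling back along $p$ and applying Whitney gives $c^T(p^*\UUU_d) \cdot c^T(p^*(\CC^k/\UUU_d)) = \prod_{i=1}^k(1+t_iu)$. Writing the $y_j$ for the Chern roots of $p^*\UUU_d$, the coefficient of $u^r$ in $\prod(1+t_iu)/\prod(1+y_ju)$ is exactly $e_r(\ttt_k) - e_{r-1}(\ttt_k) h_1(\yyy_d) + \cdots + (-1)^r h_r(\yyy_d)$, and this equals $c_r^T(p^*(\CC^k/\UUU_d))$, which vanishes for $r > k - d$ because $p^*(\CC^k/\UUU_d)$ has rank $k-d$.

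\textbf{Relation 2 (relations in $\xxx_n$ and $p^*\UUU_d$).} The defining condition of $X_{n,k,d}$ guarantees that at every $\ell_\bullet = (\ell_1,\dots,\ell_n)$, the fiberwise sum map
\begin{equation}
 \LLL_1 \oplus \cdots \oplus \LLL_n \twoheadrightarrow p^*\UUU_d, \qquad (v_1,\dots,v_n) \mapsto v_1 + \cdots + v_n,
\end{equation}
is surjective (its image is $\ell_1+\cdots+\ell_n = p(\ell_\bullet)$). This is a $T$-equivariant surjection of equivariant bundles, so its kernel $K$ is an equivariant vector bundle of rank $n-d$, and we get the short exact sequence $0 \to K \to \LLL_1\oplus\cdots\oplus\LLL_n \to p^*\UUU_d \to 0$. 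The Whitney formula then gives $c^T(K) = \prod_{i=1}^n(1+x_iu)/\prod_{j=1}^d(1+y_ju)$, whose $u^r$-coefficient is $e_r(\xxx_n) - e_{r-1}(\xxx_n) h_1(\yyy_d) + \cdots + (-1)^r h_r(\yyy_d)$. This equals $c_r^T(K) = 0$ for $r > n-d$.

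\textbf{Relation 3 (relation in $x_i$ and $p^*\UUU_d$).} For each $i$, the inclusion $\ell_i \subseteq \ell_1+\cdots+\ell_n$ assembles into a $T$-equivariant embedding of line bundles $\LLL_i \hookrightarrow p^*\UUU_d$; let $Q_i$ denote the quotient, which has rank $d-1$. The short exact sequence $0 \to \LLL_i \to p^*\UUU_d \to Q_i \to 0$ and Whitney yield $c^T(Q_i) = \prod_{j=1}^d(1+y_ju)/(1+x_iu)$. The coefficient of $u^d$ in this expression is $\sum_{r=0}^{d}(-1)^r x_i^{r} e_{d-r}(\yyy_d) = x_i^d - x_i^{d-1} e_1(\yyy_d) + \cdots + (-1)^d e_d(\yyy_d)$, which equals $c_d^T(Q_i) = 0$ since $\mathrm{rank}(Q_i) = d-1$.

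The main thing to verify carefully is that each of the three bundle maps in question is genuinely $T$-equivariant and that the rank computations are correct on every fiber; for the second relation this requires precisely the moduli condition $\dim(\ell_1+\cdots+\ell_n)=d$ built into $X_{n,k,d}$, and this is the only nontrivial geometric input. Once the short exact sequences are in place, the algebraic identifications of the relations with $c_r^T$ of the relevant quotient or kernel are routine formal manipulations of generating series.
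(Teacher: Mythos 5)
Your proof is correct and takes essentially the same approach as the paper: the same three short exact sequences (the tautological sequence on $\Gr(d,\CC^k)$, the kernel of the fiberwise sum map $\LLL_1\oplus\cdots\oplus\LLL_n \twoheadrightarrow p^*\UUU_d$, and the quotient $p^*\UUU_d/\LLL_i$), combined with the Whitney sum formula and rank-based vanishing of Chern classes.
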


\begin{proof}
 Write $\CC^k$ for the rank $k$ bundle over $X_{n,k,d}$ whose fiber over any point is the vector space $\CC^k$ with the natural action of $T = (\CC^*)^k$.
    For the relation in the first bullet point, form the tautological quotient bundle $\CC^k/\UUU_d$ over $\Gr(d,\CC^k)$ of rank $k-d$. We have a short exact sequence of vector bundles
    \begin{equation}
        0 \to \UUU_d \to \CC^k \to \CC^k/\UUU_d \to 0
    \end{equation}
    over $\Gr(d,\CC^k)$ with the relation 
    \begin{equation}
        c^T(\UUU_d) \cdot c^T(\CC^k/\UUU_d) = c^T(\CC^k) = (1 + t_1) \cdots + (1 + t_k).
    \end{equation}
    Rearranging, we get the relation
    \begin{equation}
    \label{eqn:equivariant-rational-expression-one}
        c^T(\CC^k/\UUU_d) = \frac{(1 + t_1) \cdots (1 + t_k)}{c^T(\UUU_d)} = \frac{(1 + t_1) \cdots (1 + t_k)}{(1 + y_1) \cdots (1 + y_d)}
    \end{equation}
    where $y_1, \dots, y_d$ are the equivariant Chern roots of $\UUU_d$; we have $e_i(\yyy_d) = c_i^T(\UUU_d)$. Since $\CC^k/\UUU_d$ has rank $k-d$, we have $c_r^T(\CC^k/\UUU_d) = 0$ for $r > k-d$ and the relation in the first bullet point arises from taking the degree $r$ component in \eqref{eqn:equivariant-rational-expression-one} and pulling back under the map $p^*$.

    For the second bullet point, form the direct sum $\LLL_1 \oplus \cdots \oplus \LLL_n$; this is a vector bundle over $X_{n,k,d}$ of rank $n$. By the definition of $X_{n,k,d}$, we have a surjective map of bundles $\LLL_1 \oplus \cdots \oplus \LLL_n \twoheadrightarrow p^*(\UUU_d)$ given by vector addition in each fiber. Let $\KKK_{n-d}$ be the kernel of this map; it is a vector bundle over $X_{n,k,d}$ of rank $n-d$. We have a short exact sequence
    \begin{equation}
        0 \to \KKK_{n-d} \to \LLL_1 \oplus \cdots \oplus \LLL_n \to p^*(\UUU_d) \to 0
    \end{equation}
    and the total Chern class
    \begin{equation}
        c^T(\LLL_1 \oplus \cdots \oplus \LLL_n) = c^T(\LLL_1) \cdot \cdots \cdot c^T(\LLL_n) = (1 + x_1) \cdots (1 + x_n).
    \end{equation}
    The relation in the second bullet point follows from the fact that $c_r^T(\KKK_{n-d}) = 0$ for $r > n-d$.

    For the third bullet point, let $1 \leq i \leq n$. The line bundle $\LLL_i$ embeds into the vector bundle $p^*(\UUU_d)$. The quotient $p^*(\UUU_d)/\LLL_i$ has rank $d-1$. We have a short exact sequence
    \begin{equation}
        0 \to \LLL_i \to p^*(\UUU_d) \to p^*(\UUU_d)/\LLL_i \to 0
    \end{equation}
    of bundles over $X_{n,k,d}$. The relation in the third bullet point follows from $c_d^T(p^*(\UUU_d)/\LLL_i) = 0$.
\end{proof}

The relations in Lemma~\ref{lem:chern-class-relations} coincide with those in Definition~\ref{def:j-t-ideal}. Theorem~\ref{thm:borel-presentation} says that these are all the relations we need to present $H_T^*(X_{n,k,d};\ZZ)$. There will be some added complexity in proving this holds with integer coefficients, but we can present $H_T^*(X_{n,k,d};\QQ)$ right away. 

\begin{proposition}
    \label{prop:rational-cohomology-presentation}
    Let $n \geq k \geq d$ be positive integers. We have the presentation
    \begin{equation}
        H^*_T(X_{n,k,d};\QQ) = \QQ[\xxx_n,\yyy_d,\ttt_k]^{\symm_d}/(J_{n,k,d}^{\QQ,\ttt} \cap \QQ[\xxx_n,\yyy_d,\ttt_k]^{\symm_d}).
    \end{equation}
\end{proposition}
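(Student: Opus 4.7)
The plan is to build a natural $\QQ[\ttt_k]$-algebra map from the proposed quotient ring onto $H_T^*(X_{n,k,d};\QQ)$, show it is surjective via the known algebra generators of the cohomology, and then upgrade it to an isomorphism by a rank comparison through Lemma~\ref{lem:surjection-lemma}.

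First I would define a $\QQ[\ttt_k]$-algebra homomorphism
$$\varphi : \QQ[\xxx_n,\yyy_d,\ttt_k]^{\symm_d} \longrightarrow H_T^*(X_{n,k,d};\QQ)$$
by declaring $\varphi(x_i) := c_1^T(\LLL_i)$ for $1 \leq i \leq n$, $\varphi(e_j(\yyy_d)) := p^*(c_j^T(\UUU_d))$ for $1 \leq j \leq d$, and $\varphi(t_i) := t_i$. Since $\QQ[\xxx_n,\yyy_d,\ttt_k]^{\symm_d} = \QQ[\xxx_n,\ttt_k][e_1(\yyy_d),\dots,e_d(\yyy_d)]$ is a polynomial ring in these generators, this uniquely specifies $\varphi$, and under $\varphi$ each $h_r(\yyy_d)$ is sent to the $r$-th complete homogeneous symmetric function in the equivariant Chern roots of $p^*(\UUU_d)$. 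By Lemma~\ref{lem:chern-class-relations}, each generator of $J_{n,k,d}^{\QQ,\ttt}$ listed in Definition~\ref{def:j-t-ideal} is mapped to $0$ in $H_T^*(X_{n,k,d};\QQ)$; Lemma~\ref{lem:t-invariant-ideal-generators} identifies this same set of polynomials as a generating set for $J_{n,k,d}^{\QQ,\ttt} \cap \QQ[\xxx_n,\yyy_d,\ttt_k]^{\symm_d}$, so $\varphi$ descends to a $\QQ[\ttt_k]$-algebra homomorphism
$$\bar\varphi : \QQ[\xxx_n,\yyy_d,\ttt_k]^{\symm_d}/\bigl(J_{n,k,d}^{\QQ,\ttt} \cap \QQ[\xxx_n,\yyy_d,\ttt_k]^{\symm_d}\bigr) \longrightarrow H_T^*(X_{n,k,d};\QQ).$$

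For surjectivity I would invoke Lemma~\ref{lem:ring-generation}, which says $H_T^*(X_{n,k,d};\ZZ)$ is generated as a $\ZZ[\ttt_k]$-algebra by the classes $c_1^T(\LLL_i)$ and $p^*(c_j^T(\UUU_d))$. Tensoring with $\QQ$, the same classes generate $H_T^*(X_{n,k,d};\QQ)$ as a $\QQ[\ttt_k]$-algebra, and by construction each of them lies in the image of $\bar\varphi$. Hence $\bar\varphi$ is surjective.

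Finally I would match ranks. By Lemma~\ref{lem:t-invariant-free-module} the source is a free $\QQ[\ttt_k]$-module of rank $\tfrac{k!}{(k-d)!}\cdot\Stir(n,d)$, and by Lemma~\ref{lem:cohomology-ring-ranks}, after extending scalars to $\QQ$, the target is a free $\QQ[\ttt_k]$-module of the same rank. Choosing any $\QQ[\ttt_k]$-module isomorphism $\psi$ from the target to the source, the composition $\psi \circ \bar\varphi$ is a surjective endomorphism of a finite-rank free $\QQ[\ttt_k]$-module, and is therefore an isomorphism by Lemma~\ref{lem:surjection-lemma}. It follows that $\bar\varphi$ is an isomorphism. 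The hard part here is essentially not in this argument itself but in the inputs it assembles: Lemma~\ref{lem:chern-class-relations} required the Whitney-sum analysis of three short exact sequences of equivariant bundles, and the numerical coincidence of ranks on both sides is the combined output of the orbit-harmonics calculation (Proposition~\ref{prop:orbit-harmonics-Z-homogeneous} and Lemma~\ref{lem:t-invariant-free-module}) and the $T$-stable affine paving (Lemma~\ref{lem:affine-paving} into Lemma~\ref{lem:cohomology-ring-ranks}). Once those are in hand, the passage from a surjection to an isomorphism is the classical Cayley--Hamilton step.
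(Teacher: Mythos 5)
Your proposal is correct and follows essentially the same route the paper takes: build the Chern-class map $\varphi$, use Lemma~\ref{lem:chern-class-relations} to show it kills the generators of $J_{n,k,d}^{\QQ,\ttt}\cap\QQ[\xxx_n,\yyy_d,\ttt_k]^{\symm_d}$ (identified via Lemma~\ref{lem:t-invariant-ideal-generators}), get surjectivity from Lemma~\ref{lem:ring-generation}, compare $\QQ[\ttt_k]$-ranks via Lemma~\ref{lem:t-invariant-free-module} and Lemma~\ref{lem:cohomology-ring-ranks}, and finish with the Cayley--Hamilton argument of Lemma~\ref{lem:surjection-lemma}. The only spot you gloss over is the passage from $H^*_T(X_{n,k,d};\ZZ)$ being a free $\ZZ[\ttt_k]$-module to $H^*_T(X_{n,k,d};\QQ)$ being a free $\QQ[\ttt_k]$-module of the same rank, which requires knowing $H^*_T(X_{n,k,d};\QQ)\cong\QQ\otimes_\ZZ H^*_T(X_{n,k,d};\ZZ)$; the paper handles this by noting even-degree concentration and invoking the Universal Coefficient Theorem, but since the integral equivariant cohomology is already $\ZZ$-free this is routine and not a real gap.
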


\begin{proof}
    It follows from Lemma~\ref{lem:ring-generation} that $H^*_T(X_{n,k,d};\QQ)$ is concentrated in even degrees. The Universal Coefficient Theorem and Lemma~\ref{lem:cohomology-ring-ranks} imply that $H^*_T(X_{n,k,d};\QQ)$ is a free $\QQ[\ttt_k]$-module of rank $\frac{k!}{(k-d)!} \cdot \Stir(n,d)$. 

    Lemma~\ref{lem:chern-class-relations} implies that we have a well-defined $\QQ[\ttt_k]$-algebra homomorphism
    \begin{equation}
        \varphi: \QQ[\xxx_n,\yyy_d,\ttt_k]^{\symm_d}/(J_{n,k,d}^{\QQ,\ttt} \cap \QQ[\xxx_n,\yyy_d,\ttt_k]^{\symm_d}) \to H^*_T(X_{n,k,d};\QQ)
    \end{equation}
    which sends $x_i$ to $c_1^T(\LLL_i)$ and the elementary symmetric polynoamial $e_i(\yyy_d)$ to $c_i^T(p^*(\UUU_d))$. Lemma~\ref{lem:ring-generation} implies that $\varphi$ is a surjection. 
    
    We know that the domain of $\varphi$ is a free $\QQ[\ttt_k]$-module of rank $\frac{k!}{(k-d)!} \cdot \Stir(n,d)$.  Lemma~\ref{lem:t-invariant-free-module} implies that the target of $\varphi$ is a free $\QQ[\ttt_k]$-module of the same rank. Lemma~\ref{lem:surjection-lemma} applies to show that the surjection $\varphi$ is in fact an isomorphism.
\end{proof}

\subsection{Integral cohomology presentation} The aim of this subsection is to prove Theorem~\ref{thm:borel-presentation}. The method of proof in Proposition~\ref{prop:rational-cohomology-presentation} will apply if we can show that $\ZZ[\xxx_n,\ccc_d,\ttt_k]/I_{n,k,d}$ is a free $\ZZ[\ttt_k]$-module of rank $\frac{k!}{(k-d)!} \cdot \Stir(n,d)$. We start by considering an ideal in $\ZZ[\xxx_n,\yyy_d]$.

\begin{defn}
    \label{def:j-z-ideal}
    Let $J_{n,k,d} \subseteq \ZZ[\xxx_n,\yyy_d]$ be the ideal with the same generating set as 
    in Definition~\ref{def:j-ideal-definition}
\end{defn}

Explicitly, the ideal $J_{n,k,d} \subseteq \ZZ[\xxx_n,\yyy_d]$ is generated by the polynomials
\begin{itemize}
    \item $h_r(\yyy_d)$ for all $r > k-d$,
    \item $e_r(\xxx_n) - e_{r-1}(\xxx_n) h_1(\yyy_d) + \cdots + (-1)^r h_r(\yyy_d)$ for all $r > n-d$, and
    \item $x_i^d - x_i^{d-1} e_1(\yyy_d) + \cdots + (-1)^d e_d(\yyy_d)$ for $i = 1, \dots, n$.
\end{itemize}
To shorten our statements and arguments, we introduce a notational shorthand for these generators.

\noindent
\begin{quote}
{\bf Notation.} {\em For the remainder of this subsection, let $G \subseteq \ZZ[\xxx_n,\yyy_d]$ denote the set of generators in the above three bullet points.}
\end{quote}

The elements of $G$ lie in the $\symm_d$-invariant subring $\ZZ[\xxx_n,\yyy_d]^{\symm_d} \subseteq \ZZ[\xxx_n,\yyy_d]$.
If we were working over $\QQ$, we could use the Reynolds operator $(-)^\natural = \frac{1}{d!} \sum_{w \in \symm_d} w \cdot (-)$ to show that $G$ generates $J_{n,k,d} \cap \ZZ[\xxx_n,\yyy_d]^{\symm_d}$. Although we cannot divide by $d!$ in our setting, following is an approximation of this fact over $\ZZ$.

\begin{lemma}
    \label{lem:integer-reynolds}
    Let $f \in J_{n,k,d} \cap \ZZ[\xxx_n,\yyy_d]^{\symm_d}$. Then $d! \cdot f \in \ZZ[\xxx_n,\yyy_d]^{\symm_d} \cdot G$, where $\ZZ[\xxx_n,\yyy_d]^{\symm_d} \cdot G$ is the ideal in $\ZZ[\xxx_n,\yyy_d]^{\symm_d}$ generated by $G$.
\end{lemma}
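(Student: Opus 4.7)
The plan is to mimic the Reynolds-operator argument of Lemma~\ref{lem:rational-invariant-generation}, but to sidestep the fact that $\frac{1}{d!}$ is not available by working with the unnormalized symmetrization operator
\[
N := \sum_{w \, \in \, \symm_d} w \cdot (-),
\]
which preserves the integral ring $\ZZ[\xxx_n,\yyy_d]$. The output of $N$ automatically lies in $\ZZ[\xxx_n,\yyy_d]^{\symm_d}$, and for any $\symm_d$-invariant polynomial $\phi$ one has $N(\phi) = d! \cdot \phi$.

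First I would use the definition of $J_{n,k,d}$ as an ideal of $\ZZ[\xxx_n,\yyy_d]$ generated by $G$ to fix an expression
\[
f = \sum_{g \, \in \, G} h_g \cdot g, \qquad h_g \in \ZZ[\xxx_n,\yyy_d].
\]
Next I would apply $N$ to both sides. On the left, since $f \in \ZZ[\xxx_n,\yyy_d]^{\symm_d}$, we get $N(f) = d! \cdot f$. On the right, since each generator $g \in G$ is itself $\symm_d$-invariant (by inspection of the three bullet points of Definition~\ref{def:j-z-ideal}), we may pull $g$ out of the $\symm_d$-action:
\[
N\!\left( \sum_{g \, \in \, G} h_g \cdot g \right) = \sum_{g \, \in \, G} N(h_g) \cdot g.
\]
Combining these gives $d! \cdot f = \sum_{g \in G} N(h_g) \cdot g$, and since $N(h_g) \in \ZZ[\xxx_n,\yyy_d]^{\symm_d}$ this exhibits $d! \cdot f$ as an element of $\ZZ[\xxx_n,\yyy_d]^{\symm_d} \cdot G$.

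There is essentially no obstacle here beyond recognizing that the generators in $G$ are already $\symm_d$-invariant, which is what forces the conjugated products $(w \cdot h_g)(w \cdot g)$ appearing in the expansion of $N(h_g \cdot g)$ to collapse to $(w \cdot h_g) \cdot g$. Everything else is a formal manipulation inside $\ZZ[\xxx_n,\yyy_d]$.
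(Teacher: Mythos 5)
Your proof is correct and is essentially identical to the paper's: both apply the unnormalized symmetrization $\sum_{w \in \symm_d} w \cdot (-)$ to an integral expression $f = \sum_{g \in G} h_g \cdot g$, use the $\symm_d$-invariance of $f$ and of each $g \in G$ to pull out $d! \cdot f$ on the left and the $g$'s on the right, and read off $\symm_d$-invariant coefficients. The only difference is notational (your $N$ for the operator).
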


\begin{proof}
    There exist polynomials $h_g \in \ZZ[\xxx_n,\yyy_d]$ so that 
    $$f = \sum_{g \, \in \, G} h_g \cdot g.$$
    Applying $\sum_{w  \in  \symm_d} w \cdot (-)$ to both sides and using the fact that both $f$ and the elements of $G$ are $\symm_d$-invariant, we have
    $$d! \cdot f = \sum_{g \, \in \, G} \left( \sum_{w \, \in \, \symm_d} w \cdot h_g \right) \cdot g$$
    which is an element of $\ZZ[\xxx_n,\yyy_d]^{\symm_d} \cdot G.$
\end{proof}

Lemma~\ref{lem:integer-reynolds} implies that 
\begin{equation}
    \label{eq:containment-up-to-factorial}
    d! \cdot (J_{n,k,d} \cap \ZZ[\xxx_n,\yyy_d]^{\symm_d}) \subseteq \ZZ[\xxx_n,\yyy_d]^{\symm_d} \cdot G.
\end{equation}
We certainly have $\ZZ[\xxx_n,\yyy_d]^{\symm_d} \cdot G \subseteq J_{n,k,d}.$
We want to prove 
\begin{equation}
J_{n,k,d} \cap \ZZ[\xxx_n,\yyy_d]^{\symm_d} = \ZZ[\xxx_n,\yyy_d]^{\symm_d} \cdot G.
\end{equation}
Thanks to the containment \eqref{eq:containment-up-to-factorial}, it is enough to show that $\ZZ[\xxx_n,\yyy_d]^{\symm_d}/(\ZZ[\xxx_n,\yyy_d]^{\symm_d} \cdot G)$ is a free $\ZZ$-module. We first describe a $\ZZ$-spanning set for this quotient.

\begin{lemma}
\label{lem:Z-spanning-set}
The quotient ring $\ZZ[\xxx_n,\yyy_d]^{\symm_d}/(\ZZ[\xxx_n,\yyy_d]^{\symm_d} \cdot G)$ is spanned over $\ZZ$ by the set
$$ \CCC_{n,k,d} := \{ x_1^{a_1} \cdots x_n^{a_n} \,:\, (a_1, \dots, a_n) \text{ is $(n,d)$-substaircase} \} \times \{s_\lambda(\yyy_d) \,:\, \lambda \subseteq (k-d)^d \}.$$
\end{lemma}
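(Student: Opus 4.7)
The plan is to combine two standard reductions: in the $\yyy_d$-variables, the presentation of the Grassmannian cohomology $\ZZ[\yyy_d]^{\symm_d}/(h_r(\yyy_d) : r > k-d) \cong H^*(\Gr(d,\CC^k);\ZZ)$ whose $\ZZ$-basis is exactly $\{s_\mu(\yyy_d) : \mu \subseteq (k-d)^d\}$; and in the $\xxx_n$-variables, Lemma~\ref{lem:integral-substaircase}, which reduces any $x$-monomial to a $\ZZ$-combination of $(n,d)$-substaircase monomials modulo $I_{\mathrm{HRS}} := (x_1^d, \dots, x_n^d, e_{n-d+1}(\xxx_n), \dots, e_n(\xxx_n))$. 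Since $\{h_r(\yyy_d) : r > k-d\} \subseteq G$ directly, the first reduction is immediate in our quotient. The second must be lifted, by replacing each HRS generator $x_i^d$ or $e_r(\xxx_n)$ with the corresponding third- or second-bullet generator of $G$.

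First I would record the tensor decomposition $\ZZ[\xxx_n,\yyy_d]^{\symm_d} = \ZZ[\xxx_n] \otimes_\ZZ \ZZ[\yyy_d]^{\symm_d}$ and the Schur $\ZZ$-basis of $\ZZ[\yyy_d]^{\symm_d}$, so every element has an expansion $f = \sum c_{m,\lambda}\, m \cdot s_\lambda(\yyy_d)$ with $m$ an $x$-monomial, $\ell(\lambda) \leq d$, and $c_{m,\lambda} \in \ZZ$. I would then define two reduction phases. The \emph{$\yyy_d$-phase} replaces each $s_\lambda(\yyy_d)$ with $\lambda \not\subseteq (k-d)^d$ by a $\ZZ$-combination of $s_\mu(\yyy_d)$ with $\mu \subseteq (k-d)^d$ modulo the ideal, using the Grassmannian fact; it preserves $x$-degrees. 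The \emph{$\xxx_n$-phase} takes a non-substaircase monomial $m$, writes $m = \sum a_{m'} m' + q(\xxx_n)$ with $a_{m'} \in \ZZ$ and $q \in I_{\mathrm{HRS}}$ via the HRS Gr\"obner reduction, and then lifts $q$ modulo $\ZZ[\xxx_n,\yyy_d]^{\symm_d} \cdot G$ by substituting the third-bullet generator for each $x_i^d$ factor and the second-bullet generator for each $e_r(\xxx_n)$ factor. The resulting correction lies in $\ZZ[\xxx_n,\yyy_d]^{\symm_d}$ and has strictly lower top $x$-degree than $m$, the missing $x$-degree being absorbed into $\yyy_d$-terms.

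The main argument is induction on the maximum $x$-degree $M$ of monomials in $f$. When $M = 0$, only the $\yyy_d$-phase is needed and yields a $\ZZ$-combination of $s_\mu(\yyy_d)$ with $\mu \subseteq (k-d)^d$, all of which lie in $\CCC_{n,k,d}$. For $M > 0$, one runs the $\yyy_d$-phase first (so every $\lambda$ is in $(k-d)^d$), then the $\xxx_n$-phase on each non-substaircase monomial of $x$-degree $M$; the substaircase terms land in $\CCC_{n,k,d}$, and the residual corrections have $x$-degree $< M$ and are handled by induction (possibly after one more $\yyy_d$-phase to re-reduce Schur polynomials reintroduced by the lift). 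The main obstacle is verifying the degree-decreasing property of the $\xxx_n$-phase lift: replacing each $e_r(\xxx_n)$ with $e_{r-1}(\xxx_n) h_1(\yyy_d) - \cdots + (-1)^{r+1} h_r(\yyy_d)$ drops the top $x$-degree from $r$ to $r-1$, and similarly $x_i^d$ drops to $d-1$. Combined with the homogeneity of the HRS Gr\"obner basis coefficients $g_r(\xxx_n) \in \ZZ[\xxx_n]$, this gives the strict $x$-degree decrease in each full lift of a Gr\"obner basis element.
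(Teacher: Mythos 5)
Your proposal is correct and matches the paper's proof in essence: both expand $\ZZ[\xxx_n,\yyy_d]^{\symm_d}$ in the basis of $x$-monomials times Schur polynomials, kill $s_\lambda(\yyy_d)$ with $\lambda_1 > k-d$ via Jacobi--Trudi and the $h_r(\yyy_d) \in G$ generators, lift the HRS integral Gr\"obner reduction of non-substaircase $x$-monomials by replacing each HRS generator with its $y$-corrected counterpart in $G$, and observe that the correction terms drop the $x$-degree. The paper organizes this as a single induction on the total $x$-exponent sum $a$ with a secondary lexicographic induction at fixed $a$, whereas you run the full Gr\"obner normal form in one step and induct only on top $x$-degree (re-reducing the $y$-part afterward); this is a cosmetic reorganization of the same argument.
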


\begin{proof}
    The larger ring $\ZZ[\xxx_n,\yyy_d]^{\symm_d}$ has a $\ZZ$-basis given by the set of products of the form
    $$ \hat{\CCC}_{n,k,d} = \{ x_1^{a_1} \cdots x_n^{a_n} \cdot s_\lambda(\yyy_d) \,:\, a_i \geq 0 \text{ and } \ell(\lambda) \leq d \},$$
    so this larger set $\hat{\CCC}_{n,k,d}$ descends to a $\ZZ$-spanning set of the quotient $\ZZ[\xxx_n,\yyy_d]^{\symm_d}/(\ZZ[\xxx_n,\yyy_d]^{\symm_d} \cdot G)$. Suppose we have an element $x_1^{a_1} \cdots x_n^{a_n} \cdot s_\lambda(\yyy_d) \in \hat{\CCC}_{n,k,d} - \CCC_{n,k,d}$. We aim to show 
    $$ x_1^{a_1} \cdots x_n^{a_n} \cdot s_\lambda(\yyy_d) \in \mathrm{span}_\ZZ(\CCC_{n,k,d}) \mod \ZZ[\xxx_n,\yyy_d]^{\symm_d} \cdot G.$$
    We induct on the sum $a := a_1 + \cdots + a_n$ of the exponents of the $x$-variables.

    If $a = 0$ then $\lambda_1 > k-d$. Since $h_r(\yyy_d) \in G$ for all $r > k-d$, expansion along the first row of the Jacobi-Trudi identity
    \begin{equation}
        s_\lambda(\yyy_d) = \det \left(  h_{\lambda_i - i + j}(\yyy_d) \right)_{1 \leq i,j \leq \ell(\lambda)}
    \end{equation}
    implies that $s_\lambda(\yyy_d) \in \ZZ[\yyy_d]^{\symm_d} \cdot G \subseteq \ZZ[\xxx_n,\yyy_d]^{\symm_d} \cdot G$. This concludes the case $a = 0$.

    Now assume $a > 0$. If $\lambda_1 > k-d$ we have $x_1^{a_1} \cdots x_n^{a_n} \cdot s_\lambda(\yyy_d) \in  \ZZ[\xxx_n,\yyy_d]^{\symm_d} \cdot G$ by the reasoning of the last paragraph. So assume $\lambda_1 \leq k-d$. The sequence $(a_1, \dots, a_n)$ is not $(n,d)$-substaircase.  For any polynomial $g \in G$, let $\bar{g} \in \ZZ[\xxx_n]$ be the polynomial obtained by setting the $y$-variables in $g$ equal to 0. Lemmas~\ref{lem:substaircase-standard-monomial} and \ref{lem:integral-substaircase} yield polynomials $h_g \in \ZZ[\xxx_n]$ such that
    \begin{equation}
        \initial_\prec \left( \sum_{g \, \in \, G} h_g \cdot \bar{g}  \right) = x_1^{a_1} \cdots x_n^{a_n}
    \end{equation}
    where $\prec$ is the lexicographical term order with respect to $x_1 > \cdots > x_n$. Multiplying through by $s_\lambda(\yyy_d)$ and replacing each $\bar{g}$ with the corresponding generator $g \in G$, we see that 
    \begin{equation}
    \label{eqn:rewriting-Z-two}
    x_1^{a_1} \cdots x_n^{a_n} \cdot s_\lambda(\yyy_d) 
 \equiv \\ \begin{array}{c}\text{a $\ZZ$-linear combination of terms of the form} \\ \text{$x_1^{b_1} \cdots x_n^{b_n} \cdot s_\mu(\yyy_d)$ such that $b < a$ or} \\ \text{$b=a$, $\lambda = \mu$, and $(b_1,\dots,b_n) \prec (a_1,\dots,a_n)$}\end{array} \mod \ZZ[\xxx_n,\yyy_d]^{\symm_d} \cdot G
    \end{equation}
    where $b := b_1 + \cdots + b_n$. By induction on $a$ and lexicographical order, every term in the $\ZZ$-linear combination on the right hand side lies in the $\ZZ$-span of $\CCC_{n,k,d}$ modulo $\ZZ[\xxx_n,\yyy_d]^{\symm_d} \cdot G$.
\end{proof}

We will show that the $\ZZ$-spanning set $\CCC_{n,k,d}$ in Lemma~\ref{lem:Z-spanning-set} is in fact a $\ZZ$-basis. This will also show that $J_{n,k,d} \cap \ZZ[\xxx_n,\yyy_d]^{\symm_d} = G \cdot \ZZ[\xxx_n,\yyy_d]^{\symm_d}$. The proof  uses  makes use of the  standard presentation of the cohomology of the Grassmannian:
\begin{equation}
\label{eqn:grassmannian-cohomology-presentation-1}
    H^*(\Gr(d,\CC^k);\ZZ) = \ZZ[\yyy_d]^{\symm_d}/(s_\lambda(\yyy_d) \,:\, \lambda \not\subseteq (k-d)^d).
\end{equation}
The defining ideal in \eqref{eqn:grassmannian-cohomology-presentation-1} is generated by Schur polynomials $s_\lambda(\yyy_d)$ indexed by partitions $\lambda$ whose Young diagrams do not fit inside a $d$-by-$(k-d)$ box. The variables $y_1, \dots, y_d$ represent the Chern roots of the tautological vector bundle $\UUU_d$ over $\Gr(d,\CC^k)$. The family 
$$\{ s_\lambda(\yyy_d) \,:\, \lambda \subseteq (k-d)^d \}$$
of Schur polynomials corresponding to partitions which fit inside a $d$-by-$(k-d)$ box descends to a $\ZZ$-basis of $H^*(\Gr(d,\CC^k);\ZZ)$. See \cite[Sec. 9.4]{Fulton} for a textbook treatment of these facts.

\begin{lemma}
    \label{lem:J-generating-set-over-Z}
    The set $G$ generates the ideal $J_{n,k,d} \cap \ZZ[\xxx_n,\yyy_d]^{\symm_d}$ as an ideal in $\ZZ[\xxx_n,\yyy_d]^{\symm_d}$. Furthermore, the set $\CCC_{n,k,d}$ of Lemma~\ref{lem:Z-spanning-set} descends to a $\ZZ$-basis of
    $$\ZZ[\xxx_n,\yyy_d]^{\symm_d}/(J_{n,k,d} \cap \ZZ[\xxx_n,\yyy_d]^{\symm_d}) = \ZZ[\xxx_n,\yyy_d]^{\symm_d}/(\ZZ[\xxx_n,\yyy_d]^{\symm_d} \cdot G).$$
\end{lemma}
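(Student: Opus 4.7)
The plan is to first establish that $\CCC_{n,k,d}$ is a $\ZZ$-basis of the smaller quotient $\ZZ[\xxx_n,\yyy_d]^{\symm_d}/(\ZZ[\xxx_n,\yyy_d]^{\symm_d} \cdot G)$ by a rank count against the rational version, and then use the divisibility bound from Lemma~\ref{lem:integer-reynolds} to collapse this quotient onto the intersection quotient.

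First I would compute $|\CCC_{n,k,d}|$. The number of $(n,d)$-substaircase exponent sequences is $d!\cdot \Stir(n,d)$ by Lemma~\ref{lem:substaircase-count}, and the number of partitions $\lambda \subseteq (k-d)^d$ is $\binom{k}{d}$, so $|\CCC_{n,k,d}| = \tfrac{k!}{(k-d)!}\cdot \Stir(n,d)$. By Lemma~\ref{lem:Z-spanning-set}, there is a $\ZZ$-linear surjection $\pi: \ZZ^{|\CCC_{n,k,d}|} \twoheadrightarrow \ZZ[\xxx_n,\yyy_d]^{\symm_d}/(\ZZ[\xxx_n,\yyy_d]^{\symm_d} \cdot G)$ from the free $\ZZ$-module on $\CCC_{n,k,d}$. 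Tensoring $\pi$ with $\QQ$ and invoking Lemma~\ref{lem:invariant-ideal-generators} identifies its target with $\QQ[\xxx_n,\yyy_d]^{\symm_d}/(J_{n,k,d}^{\QQ} \cap \QQ[\xxx_n,\yyy_d]^{\symm_d})$, which by Lemma~\ref{lem:vector-space-relationship} (combined with Proposition~\ref{prop:orbit-harmonics-Z-homogeneous}) has $\QQ$-dimension exactly $\tfrac{k!}{(k-d)!}\cdot \Stir(n,d)$. With domain and codomain of matching dimension, the $\QQ$-tensored map is an isomorphism, so $\ker \pi$ is torsion. Since $\ker \pi$ sits inside the free $\ZZ$-module $\ZZ^{|\CCC_{n,k,d}|}$ it is itself free, and therefore zero. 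This shows $\CCC_{n,k,d}$ is a $\ZZ$-basis of $\ZZ[\xxx_n,\yyy_d]^{\symm_d}/(\ZZ[\xxx_n,\yyy_d]^{\symm_d} \cdot G)$.

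Next I would analyze the canonical surjection
\[
    \psi: \ZZ[\xxx_n,\yyy_d]^{\symm_d}/(\ZZ[\xxx_n,\yyy_d]^{\symm_d}\cdot G) \twoheadrightarrow \ZZ[\xxx_n,\yyy_d]^{\symm_d}/(J_{n,k,d}\cap \ZZ[\xxx_n,\yyy_d]^{\symm_d})
\]
induced by the containment $\ZZ[\xxx_n,\yyy_d]^{\symm_d}\cdot G \subseteq J_{n,k,d}\cap \ZZ[\xxx_n,\yyy_d]^{\symm_d}$. The statement \eqref{eq:containment-up-to-factorial} provided by Lemma~\ref{lem:integer-reynolds} asserts that the kernel of $\psi$ is annihilated by $d!$. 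Since the previous step shows the domain of $\psi$ is a free $\ZZ$-module, $\ker\psi$ is a submodule of a free $\ZZ$-module and therefore free; being simultaneously free and $d!$-torsion, it vanishes. Thus $\psi$ is an isomorphism, which simultaneously yields the ideal equality $J_{n,k,d}\cap \ZZ[\xxx_n,\yyy_d]^{\symm_d} = \ZZ[\xxx_n,\yyy_d]^{\symm_d}\cdot G$ and the claim that $\CCC_{n,k,d}$ descends to a $\ZZ$-basis of the quotient on the right.

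The main obstacle I anticipate is the rational comparison step: namely, ensuring that tensoring $\ZZ[\xxx_n,\yyy_d]^{\symm_d}/(\ZZ[\xxx_n,\yyy_d]^{\symm_d}\cdot G)$ with $\QQ$ recovers exactly $\QQ[\xxx_n,\yyy_d]^{\symm_d}/(J_{n,k,d}^{\QQ}\cap \QQ[\xxx_n,\yyy_d]^{\symm_d})$ with dimension $|\CCC_{n,k,d}|$. This identification relies critically on the Reynolds operator, which is available over $\QQ$ (Lemma~\ref{lem:invariant-ideal-generators}) but not over $\ZZ$, and on the orbit-harmonics dimension count from Section~\ref{sec:Quotient}; everything else is the standard principle that a torsion submodule of a free $\ZZ$-module is zero, applied at both stages.
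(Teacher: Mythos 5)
Your argument is correct, and it takes a genuinely different route from the paper's. Both proofs share the same overall skeleton: first establish that $\CCC_{n,k,d}$ descends to a $\ZZ$-basis of $\ZZ[\xxx_n,\yyy_d]^{\symm_d}/(\ZZ[\xxx_n,\yyy_d]^{\symm_d}\cdot G)$, so the quotient is a free (hence torsion-free) $\ZZ$-module, and then use the $d!$-annihilation from Lemma~\ref{lem:integer-reynolds} to conclude that the further quotient by $J_{n,k,d}\cap\ZZ[\xxx_n,\yyy_d]^{\symm_d}$ introduces nothing new. Where you diverge is the step proving linear independence. The paper constructs the ring surjection $\psi$ onto the ordinary cohomology ring $H^*(X_{n,k,d};\ZZ)$, applies Leray--Hirsch to write $H^*(X_{n,k,d};\ZZ) \cong H^*(X_{n,d};\ZZ)\otimes_\ZZ H^*(\Gr(d,\CC^k);\ZZ)$, and quotes Pawlowski--Rhoades together with the classical Schur basis of the Grassmannian to show $\psi$ carries $\CCC_{n,k,d}$ to a $\ZZ$-basis; linear independence follows. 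You instead tensor the surjection $\ZZ^{|\CCC_{n,k,d}|}\twoheadrightarrow \ZZ[\xxx_n,\yyy_d]^{\symm_d}/(\ZZ[\xxx_n,\yyy_d]^{\symm_d}\cdot G)$ with $\QQ$, use Lemma~\ref{lem:invariant-ideal-generators} to identify the target with $\QQ[\xxx_n,\yyy_d]^{\symm_d}/(J_{n,k,d}^\QQ\cap\QQ[\xxx_n,\yyy_d]^{\symm_d})$, match dimensions via Lemma~\ref{lem:vector-space-relationship}, and deduce that the kernel is a torsion submodule of a finitely generated free $\ZZ$-module, hence zero. This is a purely algebraic proof: it never leaves the world of polynomial rings and orbit harmonics, whereas the paper imports the topology of $X_{n,k,d}$ to settle what is on its face a commutative-algebra statement. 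The authors explicitly remark after their proof that it would be interesting to find a purely algebraic proof of this lemma; you have supplied exactly that, at the cost of leaning more heavily on the orbit-harmonics dimension count from Section~\ref{sec:Quotient}.
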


\begin{proof}
    As discussed before Lemma~\ref{lem:Z-spanning-set},
    it is enough to show that the spanning set $\CCC_{n,k,d}$ in Lemma~\ref{lem:Z-spanning-set} is also linearly independent as a subset of $\ZZ[\xxx_n,\yyy_d]^{\symm_d}/(\ZZ[\xxx_n,\yyy_d]^{\symm_d} \cdot G)$. This implies both that $\ZZ[\xxx_n,\yyy_d]^{\symm_d}/(\ZZ[\xxx_n,\yyy_d]^{\symm_d} \cdot G)$ is a free $\ZZ$-module and also that $J_{n,k,d} \cap \ZZ[\xxx_n,\yyy_d]^{\symm_d} = \ZZ[\xxx_n,\yyy_d]^{\symm_d} \cdot G$. 
    
    Consider the ordinary cohomology ring $H^*(X_{n,k,d};\ZZ)$ of $X_{n,k,d}$.
    We have a $\ZZ$-algebra homomorphism
    \begin{equation}
    \psi: \ZZ[\xxx_n,\yyy_d]^{\symm_d}/(\ZZ[\xxx_n,\yyy_d]^{\symm_d} \cdot G) \twoheadrightarrow H^*(X_{n,k,d};\ZZ)
    \end{equation}
    which sends $x_i \mapsto c_1(\LLL_i)$ and $e_i(\yyy_d) \mapsto c_i(p^*(\UUU_d))$. 
    The Leray-Hirsch Theorem gives a $\ZZ$-module isomorphism
    \begin{equation}
    H^*(X_{n,k,d};\ZZ) \cong H^*(X_{n,d};\ZZ) \otimes_\ZZ H^*(\Gr(d,\CC^k);\ZZ).
    \end{equation}
    Pawlowski and Rhoades proved \cite[Thm 5.12 (3)]{PR} that $H^*(X_{n,d};\ZZ)$ has presentation
    \begin{equation}
        H^*(X_{n,d};\ZZ) = \ZZ[\xxx_n]/(x_1^d, x_2^2, \dots, x_n^d, e_n(\xxx_n), e_{n-1}(\xxx_n), \dots, e_{n-d+1}(\xxx_n))
    \end{equation}
    where $x_i \leftrightarrow c_1(\LLL_i)$
    so and \cite[Proof of Thm. 5.12 (3)]{PR} that
    $$ \{ x_1^{a_1} \cdots x_n^{a_n} \,:\, (a_1, \dots, a_n) \text{ is $(n,d)$-substaircase} \}$$
    descends to a $\ZZ$-basis of $H^*(X_{n,d};\ZZ)$. Furthermore, the set $s_\lambda(\UUU_d)$ where for $\lambda \subseteq (k-d)^d$ is a $\ZZ$-basis of $H^*(\Gr(d,\CC^k);\ZZ)$.
    It follows that $\psi$ maps the spanning set $\CCC_{n,k,d}$ to a $\ZZ$-basis. In particular, the set $\CCC_{n,k,d}$ must be linearly independent over $\ZZ$.
\end{proof}

It may be interesting to find a purely algebraic proof of Lemma~\ref{lem:J-generating-set-over-Z}. Our next result shows that $\CCC_{n,k,d}$ is a $\ZZ[\ttt_k]$-basis of $\ZZ[\xxx_n,\yyy_d,\ttt_k]^{\symm_d}/I_{n,k,d}$.

\begin{lemma}
    \label{lem:Zt-basis}
    The set $\CCC_{n,k,d}$ of Lemma~\ref{lem:Z-spanning-set} descends to a $\ZZ[\ttt_k]$-module basis of $\ZZ[\xxx_n,\yyy_d,\ttt_k]^{\symm_d}/I_{n,k,d}$. In particular, the quotient $\ZZ[\xxx_n,\yyy_d,\ttt_k]^{\symm_d}/I_{n,k,d}$ is a free $\ZZ[\ttt_k]$-module of rank $\frac{k!}{(k-d)!} \cdot \Stir(n,d)$.
\end{lemma}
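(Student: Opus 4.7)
The plan is to combine the spanning strategy of Proposition~\ref{prop:t-ideal-equality} (adapted to the integral, $\symm_d$-invariant setting) with the integral basis statement of Lemma~\ref{lem:J-generating-set-over-Z}, and then bootstrap to linear independence by a graded Nakayama argument. First I would fix the grading by declaring $\deg x_i = \deg y_j = \deg t_\ell = 1$; each generator of $I_{n,k,d}$ is then homogeneous, so $R := \ZZ[\xxx_n,\yyy_d,\ttt_k]^{\symm_d}/I_{n,k,d}$ inherits the structure of a graded $\ZZ[\ttt_k]$-module, with $\CCC_{n,k,d}$ consisting of homogeneous classes.

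For spanning, I would show that every homogeneous $f \in \ZZ[\xxx_n,\yyy_d,\ttt_k]^{\symm_d}$ lies in the $\ZZ[\ttt_k]$-span of $\CCC_{n,k,d}$ modulo $I_{n,k,d}$ by induction on the top $(\xxx_n,\yyy_d)$-degree appearing among its coefficients when written as $f = \sum_{m} a_m(\xxx_n,\yyy_d) \cdot m$, where $m$ ranges over $\ttt_k$-monomials and $a_m \in \ZZ[\xxx_n,\yyy_d]^{\symm_d}$ (invariance being preserved because $\symm_d$ fixes $\ttt_k$). In the inductive step, Lemma~\ref{lem:J-generating-set-over-Z} lets me write each $a_m \equiv \sum_c \gamma_c c \pmod{\ZZ[\xxx_n,\yyy_d]^{\symm_d} \cdot G}$ and then rewrite each $\bar g \in G$ in terms of the corresponding generator of $I_{n,k,d}$. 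The crucial observation is that the $t$-enhanced generator $e_r(\ttt_k) - e_{r-1}(\ttt_k) h_1(\yyy_d) + \cdots + (-1)^r h_r(\yyy_d)$ of $I_{n,k,d}$ differs from $(-1)^r h_r(\yyy_d) \in G$ by an element whose $\yyy_d$-degree is strictly less than $r$; for the other two families of generators, the generators of $I_{n,k,d}$ and of $G$ coincide verbatim. Thus the substitution introduces error terms of strictly smaller $(\xxx_n,\yyy_d)$-degree, and induction closes.

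The spanning step produces a graded surjection $\Phi \colon F := \bigoplus_{c \in \CCC_{n,k,d}} \ZZ[\ttt_k] \cdot c \twoheadrightarrow R$ of graded $\ZZ[\ttt_k]$-modules. To upgrade to an isomorphism I would invoke graded Nakayama: reducing modulo the positively graded ideal $(\ttt_k) \subset \ZZ[\ttt_k]$ sends every generator of $I_{n,k,d}$ to the corresponding generator of $\ZZ[\xxx_n,\yyy_d]^{\symm_d} \cdot G$ (up to sign), so
\begin{equation}
    R \otimes_{\ZZ[\ttt_k]} \ZZ \;\cong\; \ZZ[\xxx_n,\yyy_d]^{\symm_d}/(\ZZ[\xxx_n,\yyy_d]^{\symm_d} \cdot G),
\end{equation}
which by Lemma~\ref{lem:J-generating-set-over-Z} has $\CCC_{n,k,d}$ as a $\ZZ$-basis. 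Hence $\Phi \otimes_{\ZZ[\ttt_k]} \ZZ$ is an isomorphism. Setting $K := \ker \Phi$ and applying the right exactness of $- \otimes_{\ZZ[\ttt_k]} \ZZ$ to $0 \to K \to F \to R \to 0$ yields $K = (\ttt_k) K$; since $K \subseteq F$ is a graded submodule bounded below in degree, the minimal-nonzero-degree argument of graded Nakayama forces $K = 0$.

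The final rank count $|\CCC_{n,k,d}| = d! \cdot \Stir(n,d) \cdot \binom{k}{d} = \frac{k!}{(k-d)!} \cdot \Stir(n,d)$ is then immediate from Lemma~\ref{lem:substaircase-count} and the standard count of partitions fitting inside a $d \times (k-d)$ rectangle. The main obstacle I anticipate is the careful degree bookkeeping in the spanning step, specifically verifying uniformly across all three families that the substitution $\bar g \rightsquigarrow g$ strictly drops the $(\xxx_n,\yyy_d)$-degree; the Nakayama step is then essentially formal.
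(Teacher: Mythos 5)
The spanning half of your argument is essentially the paper's: you apply Lemma~\ref{lem:J-generating-set-over-Z} to the $(\xxx_n,\yyy_d)$-part, swap each $\bar g \in G$ for its $t$-enhanced partner $\tilde g$, and observe that the error term $\tilde g - \bar g$ has strictly smaller $(\xxx_n,\yyy_d)$-degree, which closes the induction. That part is fine.

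The linear independence half has a genuine gap at the graded Nakayama step. Right exactness of $-\otimes_{\ZZ[\ttt_k]} \ZZ$ applied to $0 \to K \to F \to R \to 0$ gives the exact sequence
\[
\mathrm{Tor}_1^{\ZZ[\ttt_k]}(R,\ZZ) \longrightarrow K/(\ttt_k)K \longrightarrow F/(\ttt_k)F \longrightarrow R/(\ttt_k)R \longrightarrow 0,
\]
and knowing that $F/(\ttt_k)F \to R/(\ttt_k)R$ is an isomorphism only tells you that the map $K/(\ttt_k)K \to F/(\ttt_k)F$ has zero image, i.e., $K \subseteq (\ttt_k)F$. It does \emph{not} give $K = (\ttt_k)K$; for that you would also need $\mathrm{Tor}_1^{\ZZ[\ttt_k]}(R,\ZZ) = 0$, which is essentially the $\ZZ[\ttt_k]$-flatness of $R$ that you are trying to prove. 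The implication ``$\Phi \otimes \ZZ$ iso $\Rightarrow$ $\Phi$ iso'' for graded surjections out of a free module is simply false: take $k=1$, $F = \ZZ[t]$, $R = \ZZ$, $\Phi$ the evaluation at $t=0$. Then $F$ is graded free of rank $1$, $R$ is graded and bounded below, $\Phi$ is a graded surjection, and $\Phi \otimes_{\ZZ[t]} \ZZ$ is the identity on $\ZZ$, yet $\ker\Phi = (t) \neq 0$. Your ``minimal-nonzero-degree'' reasoning would be valid if you actually had $K = (\ttt_k)K$, but you don't.

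The paper closes the gap in a different way: any $\ZZ$-linear dependence among the classes of $\CCC_{n,k,d}^{\ttt}$ modulo $I_{n,k,d}$ would, after tensoring with $\QQ$, yield a $\QQ$-linear dependence among $\CCC_{n,k,d}^{\ttt}$ modulo $J_{n,k,d}^{\QQ,\ttt} \cap \QQ[\xxx_n,\yyy_d,\ttt_k]^{\symm_d}$, and Lemma~\ref{lem:t-invariant-free-module} (with $\BBB_{n,k,d} = \CCC_{n,k,d}$, which is a valid choice by Lemma~\ref{lem:J-generating-set-over-Z}) rules this out. That rational-coefficient input ultimately comes from the orbit harmonics analysis, so the degree-zero fiber argument you attempt cannot replace it without some extra flatness hypothesis on $R$.
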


\begin{proof}
    Recall from Definition~\ref{def:t-augmentation} that
     $\CCC^\ttt_{n,k,d}$ is the set of products $c \cdot m_\ttt$ of elements $c \in \CCC_{n,k,d}$ with monomials $m_\ttt$ in the $t$-variables. We want to show that $\CCC^\ttt_{n,k,d}$ is a $\ZZ$-basis of $\ZZ[\xxx_n,\ccc_d,\ttt_k]/I_{n,k,d}$.

    We first show that $\CCC^\ttt_{n,k,d}$ spans $\ZZ[\xxx_n,\ccc_d,\ttt_k]/I_{n,k,d}$ over $\ZZ$. Let $m_\xxx \cdot f_\yyy \cdot m_\ttt \in \ZZ[\xxx_n,\yyy_d,\ttt_k]^{\symm_d}$, where $m_\xxx$ is a monomial in the $x$-variables, $f_\yyy \in \ZZ[\yyy_d]^{\symm_d}$ is a homogeneous symmetric polynomial in the $y$-variables, and $m_\ttt$ is a monomial in the $t$-variables. If $m_\xxx = 1$ and $f_\yyy \in \ZZ$, then $m_\xxx \cdot f_\yyy \cdot m_\ttt$ lies in $\mathrm{span}_\ZZ(\CCC^\ttt_{n,k,d})$. Otherwise, we argue as follows.
    
    By Lemma~\ref{lem:J-generating-set-over-Z}, there exist integers $\gamma_c \in \ZZ$ and polynomials $h_g \in \ZZ[\xxx_n,\ccc_d]$ such that 
    \begin{equation}
    \label{eqn:zt-basis-one}
    m_\xxx \cdot f_\yyy = \sum_{c \, \in \, \CCC_{n,k,d}} \gamma_c \cdot c + \sum_{g \, \in \, G} h_g \cdot g.
    \end{equation}
    Discarding redundant terms if necessary, we may assume that 
    \begin{itemize}
        \item $\gamma_c = 0$ unless $c$ has degree $\deg(m_\xxx \cdot f_\yyy)$, and
        \item $h_g$ is a homogeneous polynomial of degree $\deg(m_\xxx \cdot f_\yyy) - \deg(g)$.
    \end{itemize}
    We may multiply Equation~\eqref{eqn:zt-basis-one} by $m_\ttt$ to obtain
    \begin{equation}
    \label{eqn:zt-basis-two}
    m_\xxx \cdot f_\yyy \cdot m_\ttt = \sum_{c \, \in \, \CCC_{n,k,d}} \gamma_c \cdot c \cdot m_\ttt + \sum_{g \, \in \, G} h_g \cdot g \cdot m_\ttt.
    \end{equation}
    For every generator $g \in G$, there is a corresponding generator $\tilde{g} \in \ZZ[\xxx_n,\yyy_d,\ttt_k]^{\symm_d}$ of $I_{n,k,d}$ such that $g$ is obtained from $\tilde{g}$ by setting the $t$-variables to zero. 
    We have 
    \begin{equation}
    \label{eqn:zt-basis-three}
    m_\xxx \cdot f_\yyy \cdot m_\ttt = \sum_{c \, \in \, \CCC_{n,k,d}} \gamma_c \cdot c \cdot m_\ttt  + \sum_{g \in G} h_g \cdot (g-\tilde{g}) \cdot m_\ttt + \sum_{g \, \in \, G} h_g \cdot \tilde{g} \cdot m_\ttt.
    \end{equation}
    The right hand side of Equation~\eqref{eqn:zt-basis-three} may be analyzed as follows.
    \begin{itemize}
        \item The first sum $\sum_{c \in \CCC_{n,k,d}} \gamma_c \cdot c \cdot m_\ttt$ is a $\ZZ$-linear combination of elements of $\CCC^\ttt_{n,k,d}$.
        \item The second sum $ \sum_{g \in G} h_g \cdot (g-\tilde{g}) \cdot m_\ttt$ is a $\ZZ$-linear combination of polynomials of the form $m'_\xxx \cdot f'_\yyy \cdot m'_\ttt$ such that
        $$\deg(m'_\xxx \cdot f'_\yyy \cdot m'_\ttt) = \deg(m_\xxx \cdot f_\yyy \cdot m_\ttt)$$
        but
        $$\deg(m'_\xxx \cdot f'_\yyy) < \deg(m_\xxx \cdot f_\yyy) \quad \text{and} \quad \quad \deg(m'_\ttt) > \deg(m_\ttt).$$
        \item The third sum $\sum_{g \in G} h_g \cdot \tilde{g} \cdot m_\ttt$ is an element of $I_{n,k,d}$.
    \end{itemize}
    By induction on $\deg(m_\xxx \cdot f_\yyy)$, we conclude that $m_\xxx \cdot f_\yyy \cdot m_\ttt$ lies in the $\ZZ$-span of $\CCC^\ttt_{n,k,d}$ modulo $I_{n,k,d}$.

    Any $\ZZ$-linear dependence of $\CCC^\ttt_{n,k,d}$ modulo $I_{n,k,d}$ would induce a $\QQ$-linear independence of $\CCC^\ttt_{n,k,d}$ modulo $\QQ \otimes_\ZZ I_{n,k,d}$ inside the ring $\QQ[\xxx_n,\yyy_d,\ttt_k]^{\symm_d}$.
    By Lemma~\ref{lem:t-invariant-free-module} such a $\QQ$-linear dependence cannot occur, provided $\CCC_{n,k,d}$ descends to a $\QQ$-basis of $\QQ[\xxx_n,\yyy_d]^{\symm_d}/(J_{n,k,d}^\QQ \cap \QQ[\xxx_n,\yyy_d]^{\symm_d})$. This latter fact is true by Lemma~\ref{lem:J-generating-set-over-Z}.
\end{proof}

We have all the tools we need to present $H^*_T(X_{n,k,d};\ZZ)$. We recall the statement of Theorem~\ref{thm:borel-presentation}.

\noindent
{\bf Theorem 1.2.} {\em For positive integers $n \geq k \geq d$, let $I_{n,k,d} \subseteq \ZZ[\xxx_n,\yyy_d,\ttt_k]^{\symm_d}$ be the ideal generated by 
\begin{itemize}
    \item $e_r(\ttt_k) - e_{r-1}(\ttt_k) h_1(\yyy_d) + \cdots + (-1)^r h_r(\yyy_d)$ for $r > k-d$, 
    \item $e_r(\xxx_n) - e_{r-1}(\xxx_n) h_1(\yyy_d) + \cdots + (-1)^r h_r(\yyy_d)$ for $r > n-d$, and
    \item $x_i^d - x_i^{d-1} e_1(\yyy_d) + \cdots + (-1)^d e_d(\yyy_d)$ for $i = 1, \dots, n$.
\end{itemize}
If $T = (\CC^*)^k$, the $T$-equivariant cohomology of $X_{n,k,d}$ has presentation
    \begin{equation}
        H^*_T(X_{n,k,d};\ZZ) = \ZZ[\xxx_n,\yyy_d,\ttt_k]^{\symm_d}/I_{n,k,d}
    \end{equation}
    where 
    \begin{itemize}
        \item $x_i$ represents the equivariant Chern class of the line bundle $\LLL_i$ over $X_{n,k,d}$ with fiber $\ell_i$ over $\ell_\bullet = (\ell_1, \dots, \ell_n)$,
        \item the variables $y_1, \dots, y_d$ represent the equivariant Chern roots of the pullback along $p: X_{n,k,d} \to \Gr(d,\CC^k)$ of the rank $d$ tautological vector bundle $\UUU_d$ over $\Gr(d,\CC^k)$, and
        \item $t_i \in H^2_T(X_{n,k,d};\ZZ)$ represents the equivariant Chern class of the tautological line bundle over the $i^{th}$ factor of $BT = (\PP^{\infty})^k$.
    \end{itemize}}

\begin{proof}
    Lemma~\ref{lem:chern-class-relations} implies that we have a well-defined $\ZZ[\ttt_k]$-homomorphism $$\varphi: \ZZ[\xxx_n,\yyy_d,\ttt_k]^{\symm_d}/I_{n,k,d} \to H^*_T(X_{n,k,d};\ZZ)$$which maps the variables to the indicated Chern classes. Lemma~\ref{lem:cohomology-ring-ranks} says that the target of $\varphi$ is a free $\ZZ[\ttt_k]$-module of rank $\frac{k!}{(k-d)!} \cdot \Stir(n,d)$. Lemma~\ref{lem:Zt-basis} implies that the domain of $\varphi$ is a free $\ZZ[\ttt_k]$-module of the same rank. Lemma~\ref{lem:ring-generation} says that $\varphi$ is surjective. Finally, Lemma~\ref{lem:surjection-lemma} implies that $\varphi$ is an isomorphism.
\end{proof}

Recall that $X_{n,k} = X_{n,k,k}$ is the moduli space of $n$-tuples of lines $(\ell_1, \dots, \ell_n)$ in $\CC^k$ which satisfy $\ell_1 + \cdots + \ell_n = \CC^k$.  Pawlowski and Rhoades posed \cite[Prob. 9.8]{PR} the problem of computing the torus equivaiant cohomology of $X_{n,k}$. The solution is as follows. Let $I_{n,k} \subseteq \ZZ[\xxx_n,\ttt_k]$ be the ideal generated by
\begin{itemize}
    \item $x_i^k - x_i^{k-1} e_1(\ttt_k) + \cdots + (-1)^k e_k(\ttt_k)$ for $1 \leq i \leq n$, and
    \item $e_r(\xxx_n) - e_{r-1}(\xxx_n) h_1(\ttt_k) + \cdots + (-1)^r h_r(\ttt_k)$ for $r > n-k$.
\end{itemize}

\begin{corollary}
    \label{cor:pr-problem}
    For positive integers $n \geq k$, the $T$-equivariant cohomology of $X_{n,k}$ has presentation
    \begin{equation}
        H^*_T(X_{n,k};\ZZ) = \ZZ[\xxx_n,\ttt_k]/I_{n,k}
    \end{equation}
    where $x_i$ and $t_i$ represent the equivariant Chern classes of the line bundles of Theorem~\ref{thm:borel-presentation}.
\end{corollary}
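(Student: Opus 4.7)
The plan is to derive Corollary~\ref{cor:pr-problem} as the specialization of Theorem~\ref{thm:borel-presentation} to the case $d=k$, combined with an elimination of the $\yyy_k$-variables using the first bullet point of the defining ideal $I_{n,k,k}$. When $d=k$, the map $p:X_{n,k,k}\to\Gr(k,\CC^k)=\{\bullet\}$ is constant and $p^*(\UUU_k)$ is the trivial rank-$k$ bundle with the canonical $T$-action; its equivariant Chern roots are $t_1,\dots,t_k$, so geometrically we already expect the $\yyy_k$-variables in Theorem~\ref{thm:borel-presentation} to be redundant.

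Algebraically, I would make this precise as follows. The first bullet point of $I_{n,k,k}$ consists of the coefficients of $u^r$ (for $r\geq 1$) of the formal identity
\[
\prod_{i=1}^k(1+t_i u)=\prod_{j=1}^k(1+y_j u),
\]
which is equivalent (reading off the coefficients of $u^1,\dots,u^k$, the higher ones being automatic since $e_r(\ttt_k)=0$ for $r>k$) to the $k$ identifications $e_r(\yyy_k)=e_r(\ttt_k)$ for $1\leq r\leq k$. Thus the first family of generators cuts out exactly the ideal $(\,e_r(\yyy_k)-e_r(\ttt_k)\,:\,1\leq r\leq k\,)$.

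Using the fundamental theorem of symmetric polynomials, one has
\[
\ZZ[\xxx_n,\yyy_k,\ttt_k]^{\symm_k}=\ZZ[\xxx_n,\ttt_k][\,e_1(\yyy_k),\dots,e_k(\yyy_k)\,],
\]
a polynomial ring in $k$ algebraically independent generators over $\ZZ[\xxx_n,\ttt_k]$. The assignment $e_r(\yyy_k)\mapsto e_r(\ttt_k)$ (fixing $\xxx_n$ and $\ttt_k$) therefore defines a surjective $\ZZ$-algebra homomorphism
\[
\Phi:\ZZ[\xxx_n,\yyy_k,\ttt_k]^{\symm_k}\twoheadrightarrow\ZZ[\xxx_n,\ttt_k]
\]
whose kernel is precisely the ideal just identified. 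Consequently $\Phi$ descends to an isomorphism $\ZZ[\xxx_n,\yyy_k,\ttt_k]^{\symm_k}/I_{n,k,k}\xrightarrow{\sim}\ZZ[\xxx_n,\ttt_k]/\Phi(I_{n,k,k})$.

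To conclude, I would check that $\Phi$ carries the remaining two families of generators of $I_{n,k,k}$ onto the two families of generators of $I_{n,k}$. Since $\Phi$ sends every $\symm_k$-invariant polynomial in $\yyy_k$ to the same polynomial in $\ttt_k$, in particular $h_r(\yyy_k)\mapsto h_r(\ttt_k)$, and a direct inspection shows that the third bullet point of $I_{n,k,k}$ maps onto the first bullet point of $I_{n,k}$ while the second bullet point maps onto the second. Combining with Theorem~\ref{thm:borel-presentation} gives the stated presentation, and the geometric meanings of $x_i$ and $t_i$ are inherited verbatim. The only nontrivial point is the collapse of the infinite first family to the $k$ elementary-symmetric identifications over $\ZZ$, and this is what the generating-function argument handles.
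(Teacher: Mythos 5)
Your proposal is correct and takes essentially the same route as the paper: both specialize Theorem~\ref{thm:borel-presentation} to $d=k$, observe that $p^*(\UUU_k)=\CC^k$ so that the $\yyy_k$-variables are redundant, and then show algebraically that eliminating them via the first family of generators of $I_{n,k,k}$ reduces the quotient to $\ZZ[\xxx_n,\ttt_k]/I_{n,k}$. The only cosmetic difference is that you phrase the elimination as identifying the kernel of $\Phi$ via the generating-function equivalence $\prod(1+t_iu)=\prod(1+y_ju)\Leftrightarrow e_r(\yyy_k)=e_r(\ttt_k)$, whereas the paper deduces $f(\ttt_k)-f(\yyy_k)\in I_{n,k,k}$ for symmetric $f$ and exhibits mutually inverse maps between the two quotients; these are the same observation.
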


\begin{proof}
    In the notation of Theorem~\ref{thm:borel-presentation}, since $d = k$ thep pullback under $p: X_{n,k} \to \Gr(k,\CC^k)$ of the tautological bundle $\UUU_k$ over (the one-point space) $\Gr(k,\CC^k)$ is the bundle $\CC^k$ with its natural $T$-action. It follows that $e_r(\yyy_k) = e_r(\ttt_k)$ in as elements of $H^*_T(X_{n,k};\ZZ)$ for all $r$. This yields a $\ZZ[\ttt_k]$-algebra map
    $$\varphi: \ZZ[\xxx_n,\ttt_k]/I_{n,k} \to H^*_T(X_{n,k};\ZZ)$$
    which sends variables to Chern classes. Lemma~\ref{lem:ring-generation} together with the bundle equality $p^*(\UUU_k) = \CC^k$ implies that $\varphi$ is a surjection. 

    In order to show that $\varphi$ is an isomorphism, it is enough (by Theorem~\ref{thm:borel-presentation} and Lemma~\ref{lem:surjection-lemma}) to prove $\ZZ[\xxx_n,\yyy_k,\ttt_k]^{\symm_k}/I_{n,k,k} \cong \ZZ[\xxx_n,\ttt_k]/I_{n,k}$ as $\ZZ[\ttt_k]$-algebras where $\symm_d$ acts on the $y$-variables. Indeed, the ideal membership 
    \begin{center} $\sum_{a+b = r} (-1)^b e_a(\ttt_k) h_b(\yyy_k) \in I_{n,k,k}$ for $r > 0$ \end{center} implies that $f(\ttt_k) - f(\yyy_k) \in I_{n,k,k}$ for any symmetric polynomial $f$. This given, the surjection $\ZZ[\xxx_n,\yyy_k,\ttt_k]^{\symm_k} \twoheadrightarrow \ZZ[\xxx_n,\ttt_k]$ sending $e_r(\yyy_d) \mapsto e_r(\ttt_k)$ and the inclusion $\ZZ[\xxx_n,\ttt_k] \hookrightarrow \ZZ[\xxx_n,\yyy_k,\ttt_k]^{\symm_k}$ are easily seen to induce mutually inverse $\ZZ[\ttt_k]$-module isomorphisms between $\ZZ[\xxx_n,\yyy_k,\ttt_k]^{\symm_k}/I_{n,k,k}$ and $\ZZ[\xxx_n,\ttt_k]/I_{n,k}$.
\end{proof}

\section{Towards a GKM Presentation}
\label{sec:GKM}

Let $T = (\CC^*)^k$ act on a manifold $X$ and let $X^T$ be the set of $T$-fixed points with inclusion map $i: X^T \hookrightarrow X$. When  $X^T$  is finite, Goresky, Kottwitz, and MacPherson introduced \cite{GKM} described conditions on $X$ (see \cite{GZ}) which
\begin{itemize}
    \item guarantee that the restriction map $i^*: H^*_T(X;\QQ) \rightarrow H^*_T(X^T;\QQ)$ is injective, and
    \item combinatorially describe the image of $i^*$ in $H^*_T(X^T;\QQ) \cong \bigoplus_{x \in X^T} \QQ[\ttt_k]$ in terms of the $1$-dimensional $T$-orbits in $X$.
\end{itemize}
One of these {\em GKM conditions} on $X$ is compactness. The moduli space $X_{n,k,d}$ is not compact, and so does not directly fit into this framework. The purpose of this section is to show that, nevertheless, the restriction map $i^*: H^*_T(X;\QQ) \rightarrow H^*_T(X^T;\QQ)$ is an injection. The proof of this fact uses the orbit harmonics results of Section~\ref{sec:Quotient}.

Let $n \geq k \geq d$ be positive integers and consider the space $\Zpoints_{n,k,d} \subseteq \QQ^{n+d+k}$ of Definition~\ref{def:z-family}. The variety $\Zpoints_{n,k,d}$ breaks up into irreducible components as follows. 

For any injection $f: \{n+1, \dots, n+d\} \hookrightarrow \{n+d+1, \dots, n+d+k\}$ and any surjection $g: \{1, \dots, n\} \twoheadrightarrow \{n+1, \dots, n+d\}$, let $\ZZZ_{n,k,d}^{f,g} \subseteq \ZZZ_{n,k,d}$ be the locus of points $$(z_1, \dots, z_n; z_{n+1}, \dots, z_{n+d}; z_{n+d+1}, \dots, z_{n+d+k}) \in \Zpoints_{n,k,d}$$ such that 
$z_i = z_{f(i)}$ for all $n+1 \leq i \leq n+d$ and $z_j = z_{g(j)}$ for all $1 \leq j \leq n$. Then $\Zpoints^{f,g}_{n,k,d}$ is a $k$-dimensional linear subspace of $\QQ^{n+k+d}$. We have
\begin{equation}
\label{eq:irreducible-decomposition}
    \Zpoints_{n,k,d} = \bigcup_{f,g} \Zpoints_{n,k,d}^{f,g}
\end{equation}
so that $\Zpoints_{n,k,d}$ is an arrangement of $k$-dimensional subspaces in $(n+k+d)$-space and \eqref{eq:irreducible-decomposition} is the decomposition of $\Zpoints_{n,k,d}$ into its irreducible components.

For $f, g$ fixed, the coordinate ring of $\Zpoints_{n,k,d}^{f,g}$ is given by 
\begin{equation} 
    \QQ[\Zpoints_{n,k,d}^{f,g}] = \QQ[\xxx_n,\yyy_d,\ttt_k] / (x_j - y_{g(j)}, y_i - t_{f(i)} )
\end{equation}
and we have an evaluation isomorphism
\begin{equation} 
    \varepsilon_{f,g}: \QQ[\Zpoints_{n,k,d}^{f,g}] = \QQ[\xxx_n,\yyy_d,\ttt_k] / (x_j - y_{g(j)}, y_i - t_{f(i)} ) \xrightarrow{ \, \, \sim \, \, } \QQ[\ttt_k]
\end{equation}
determined by $x_j \mapsto t_{g(f(j))}, y_i \mapsto t_{f(i)}, t_k \mapsto t_k$. Each inclusion $\Zpoints_{n,k,d}^{f,g} \hookrightarrow \Zpoints_{n,k,d}$ induces a restriction map $\iota_{f,g}: \QQ[\Zpoints_{n,k,d}] \rightarrow \QQ[\Zpoints_{n,k,d}^{f,g}]$ between coordinate rings. Patching these maps together yields a $\QQ$-algebra map
\begin{equation}
    \iota: \QQ[\Zpoints_{n,k,d}] \xrightarrow{ \, \, \oplus \, \iota_{f,g} \, \, } \bigoplus_{f,g} \QQ[\Zpoints_{n,k,d}^{f,g}] \xrightarrow[ \, ,\ \sim \, \, ]{\, \, \oplus \, \varepsilon_{f,g} \, \, } \bigoplus_{f,g} \QQ[\ttt_k].
\end{equation}

\begin{lemma}
    \label{lem:iota-is-injective}
    The map $\iota$ is injective.
\end{lemma}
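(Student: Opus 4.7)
The plan is to reduce the injectivity of $\iota$ to the set-theoretic decomposition \eqref{eq:irreducible-decomposition} together with the fact that $\QQ[\Zpoints_{n,k,d}]$ is the reduced coordinate ring of $\Zpoints_{n,k,d}$.

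First, I would unpack what $\iota$ does on an element $[p] \in \QQ[\Zpoints_{n,k,d}]$ represented by $p \in \QQ[\xxx_n,\yyy_d,\ttt_k]$. By construction, the component of $\iota([p])$ indexed by $(f,g)$ is the image of $p$ in $\QQ[\Zpoints_{n,k,d}^{f,g}] = \QQ[\xxx_n,\yyy_d,\ttt_k]/(x_j - y_{g(j)}, \, y_i - t_{f(i)})$, i.e. the polynomial obtained from $p$ by the substitutions $y_i \to t_{f(i)}$ and $x_j \to t_{g(f(j))}$. This polynomial in $\QQ[\ttt_k]$ is precisely the restriction of $p$, viewed as a function, to the linear subspace $\Zpoints_{n,k,d}^{f,g} \subseteq \QQ^{n+d+k}$.

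Next, suppose $[p] \in \ker \iota$. Then for every pair $(f,g)$ of an injection and a surjection of the relevant shape, the polynomial $p$ vanishes identically on $\Zpoints_{n,k,d}^{f,g}$. Applying the set-theoretic irreducible decomposition in \eqref{eq:irreducible-decomposition}, namely $\Zpoints_{n,k,d} = \bigcup_{f,g} \Zpoints_{n,k,d}^{f,g}$, we conclude that $p$ vanishes at every point of $\Zpoints_{n,k,d}$. Hence $p \in \II(\Zpoints_{n,k,d})$ and so $[p] = 0$ in $\QQ[\Zpoints_{n,k,d}] = \QQ[\xxx_n,\yyy_d,\ttt_k]/\II(\Zpoints_{n,k,d})$.

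There is essentially no obstacle in this argument; the statement follows formally from the two inputs (the decomposition \eqref{eq:irreducible-decomposition} and the definition of $\QQ[\Zpoints_{n,k,d}]$ as the vanishing-ideal quotient). If one wanted a scheme-theoretic refinement, one would need to worry about whether $\II(\Zpoints_{n,k,d})$ equals the intersection of the ideals $\II(\Zpoints_{n,k,d}^{f,g})$; but this equality is automatic at the level of radical ideals, and both sides are radical here, so the equivalence $p \in \II(\Zpoints_{n,k,d}) \Leftrightarrow p \in \bigcap_{f,g} \II(\Zpoints_{n,k,d}^{f,g})$ is exactly what the decomposition provides. Thus $\iota$ is injective, as desired.
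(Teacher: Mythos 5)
Your argument is correct and matches the paper's (one-line) proof, which also invokes the decomposition \eqref{eq:irreducible-decomposition} and the isomorphisms $\varepsilon_{f,g}$; you have simply spelled out the reduction to ``a polynomial vanishing on every irreducible component vanishes on the union, hence lies in the vanishing ideal.'' Your closing remark about radical ideals is the right reason the set-theoretic decomposition suffices here, since $\QQ[\Zpoints_{n,k,d}]$ is defined as the quotient by the vanishing ideal $\II(\Zpoints_{n,k,d})$.
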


\begin{proof}
    This is true due to the decomposition in \eqref{eq:irreducible-decomposition} and because each $\varepsilon_{f,g}$ is an isomorphism.
\end{proof}

Proposition~\ref{prop:t-ideal-equality} gives the identification 
\begin{equation}\QQ[\Zpoints_{n,k,d}] = \QQ[\xxx_n,\yyy_d,\ttt_k]/\II(\Zpoints_{n,k,d}) = \QQ[\xxx_n,\yyy_d,\ttt_k]/J_{n,k,d}^{\QQ,\ttt}.\end{equation}
Taking $\symm_d$-invariants, Proposition~\ref{prop:rational-cohomology-presentation} implies that 
\begin{equation}
    \QQ[\Zpoints_{n,k,d}]^{\symm_d} = \QQ[\xxx_n,\yyy_d,\ttt_k]^{\symm_d}/(J_{n,k,d}^{\QQ,\ttt} \cap \QQ[\xxx_n,\yyy_d,\ttt_k]^{\symm_d}) = H^*_T(X_{n,k,d};\QQ).
\end{equation}
The $T$-fixed points $X_{n,k,d}^T$ correspond to words $w \in \WWW_{n,k,d}$: the fixed point corresponding to $w$ is the tuple of lines $(\langle \eee_{w(1)} \rangle, \cdots, \langle \eee_{w(n)} \rangle)$ where $\langle \eee_j \rangle$ is the line in $\CC^k$ spanned by the $j^{th}$ standard basis vector $\eee_j$.

\begin{proposition}
    \label{prop:restriction-injective}
    Let $i: X_{n,k,d}^T \hookrightarrow X_{n,k,d}$ be the inclusion. The induced cohomology map $$i^*: H^*_T(X_{n,k,d};\QQ) \rightarrow H^*_T(X_{n,k,d}^T;\QQ)$$ is injective. If we write
    $$H^*_T(X_{n,k,d}^T;\QQ) = \bigoplus_{w \, \in \, \WWW_{n,k,d}} \QQ[\ttt_k]$$
    with
    respect to the presentation of $H^*_T(X_{n,k,d};\QQ) = \QQ[\xxx_n,\yyy_d,\ttt_k]^{\symm_d}/(J_{n,k,d}^{\QQ,\ttt} \cap \QQ[\xxx_n,\yyy_d,\ttt_k]^{\symm_d})$ in Proposition~\ref{prop:rational-cohomology-presentation}, the map $\iota^*$ sends 
    $$x_i \mapsto t_{w(i)}, \quad  e_r(\yyy_d) \mapsto e_r(t_j \,:\, j \in w([n])), \quad t_i \mapsto t_i$$
    where the image of $e_r(\yyy_d)$ is the elementary symmetric polynomial of degree $r$ evaluated at the set of variables $t_j$ for which $j$ appears in the image of $w$.
\end{proposition}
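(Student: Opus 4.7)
The plan is to combine the orbit harmonics identification $H^*_T(X_{n,k,d};\QQ) = \QQ[\ZZZ_{n,k,d}]^{\symm_d}$ coming from Propositions~\ref{prop:t-ideal-equality} and~\ref{prop:rational-cohomology-presentation} with the irreducible decomposition of $\ZZZ_{n,k,d}$ and the injectivity of $\iota$ from Lemma~\ref{lem:iota-is-injective}. The key claim to verify is that the $\symm_d$-invariant part of $\iota$ \emph{is} the geometric restriction map $i^*$ to the $T$-fixed locus. Since the functor of $\symm_d$-invariants on $\QQ$-vector spaces is exact (via the Reynolds operator), injectivity of $\iota$ will then transfer to injectivity of $i^*$.

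First I would reindex. Identifying $\{n+1,\ldots,n+d\}$ with $[d]$ and $\{n+d+1,\ldots,n+d+k\}$ with $[k]$, each pair $(f,g)$ indexing an irreducible component $\ZZZ_{n,k,d}^{f,g}$ yields a word $w_{f,g} := f \circ g \in \WWW_{n,k,d}$, and this assignment is surjective. The group $\symm_d$ permutes pairs $(f,g)$, preserves $w_{f,g}$, and acts freely (since $f$ is injective) with orbits of size $d!$ in bijection with $\WWW_{n,k,d}$. Under the evaluation isomorphisms $\varepsilon_{f,g}$, this $\symm_d$-action simply permutes the summands of $\bigoplus_{(f,g)} \QQ[\ttt_k]$ without touching the internal ring structure, since every $t_i$ is $\symm_d$-fixed. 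Hence
\[
\left(\bigoplus_{(f,g)} \QQ[\ttt_k]\right)^{\symm_d} \;\cong\; \bigoplus_{w \in \WWW_{n,k,d}} \QQ[\ttt_k],
\]
where a $\symm_d$-invariant tuple $(a_{f,g})$ corresponds to $(a_w)_w$ with $a_w := a_{f,g}$ for any $(f,g)$ satisfying $w_{f,g} = w$.

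Taking $\symm_d$-invariants of $\iota$ then yields an injection $\iota^{\symm_d}: H^*_T(X_{n,k,d};\QQ) \hookrightarrow \bigoplus_{w \in \WWW_{n,k,d}} \QQ[\ttt_k]$. Chasing definitions, on the $w$-component and for any $(f,g)$ with $w_{f,g} = w$, the composite $\varepsilon_{f,g} \circ \iota_{f,g}$ sends $x_j \mapsto t_{f(g(j))} = t_{w(j)}$, $y_i \mapsto t_{f(i)}$, and $t_i \mapsto t_i$. For any $\symm_d$-invariant polynomial $F(\yyy_d)$, the value $F(t_{f(1)},\ldots,t_{f(d)})$ depends only on the set $f([d]) = w([n])$; in particular $e_r(\yyy_d) \mapsto e_r(t_j \,:\, j \in w([n]))$.

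What remains is to identify $\iota^{\symm_d}$ with the geometric restriction $i^*$, which is the main obstacle. The $T$-fixed point $p_w$ corresponding to $w \in \WWW_{n,k,d}$ is the tuple $(\langle \eee_{w(1)}\rangle, \ldots, \langle \eee_{w(n)}\rangle) \in X_{n,k,d}$. At $p_w$, the fiber of $\LLL_i$ is $\langle \eee_{w(i)}\rangle$ with $T$-character $t_{w(i)}$, giving $c_1^T(\LLL_i)|_{p_w} = t_{w(i)}$; the fiber of $p^*(\UUU_d)$ is $\bigoplus_{j \in w([n])} \langle \eee_j\rangle$ with equivariant Chern roots $\{t_j : j \in w([n])\}$, giving $c_r^T(p^*(\UUU_d))|_{p_w} = e_r(t_j : j \in w([n]))$. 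These match the formulas for $\iota^{\symm_d}$ on the generators of $H^*_T(X_{n,k,d};\QQ)$ supplied by Lemma~\ref{lem:ring-generation}, so $i^* = \iota^{\symm_d}$ and the proposition follows. The careful bookkeeping between the algebraic orbit-harmonics evaluations $\varepsilon_{f,g}$ and the geometric fixed-point localization is where most of the attention is needed.
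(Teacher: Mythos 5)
Your argument is correct and follows the same route as the paper: it reduces the injectivity of $i^*$ to Lemma~\ref{lem:iota-is-injective} via the orbit harmonics identification of $H^*_T(X_{n,k,d};\QQ)$ with $\QQ[\Zpoints_{n,k,d}]^{\symm_d}$ and the exactness of the $\symm_d$-invariants functor over $\QQ$. Where the paper's proof is a single sentence that leaves the equality $i^* = \iota^{\symm_d}$ implicit, you explicitly verify it by reindexing the components $\Zpoints_{n,k,d}^{f,g}$ by words $w = f\circ g \in \WWW_{n,k,d}$ and comparing the $\QQ[\ttt_k]$-algebra generators $c_1^T(\LLL_i)$ and $c_r^T(p^*\UUU_d)$ under fixed-point restriction against the formulas $\varepsilon_{f,g}\circ\iota_{f,g}$; this is a genuine (if small) step the paper glosses over, and your treatment of it is sound.
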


\begin{proof}
    This follows from Lemma~\ref{lem:iota-is-injective} because restricting the domain of an injection to its $\symm_d$-invariant part yields an injection.
\end{proof}

An important feature of GKM theory is a combinatorial description of the image of the map $i^*: H^*_T(X;\QQ) \hookrightarrow H^*_T(X^T;\QQ) \cong \bigoplus_{x \in X^T} \QQ[\ttt_k]$ in terms of the 1-dimensional orbits of $T$ acting on $X$. Each of these orbits is homeomorphic to $\PP^1$ and connects two fixed points $x, x' \in X^T$. These 1-dimensional orbits form the edges in a graph on the vertices $X^T$ called the {\em GKM graph}; there is a at most 1-dimensional orbit connecting any two fixed points. The edges of the GKM graph correspond to divisibility conditions between components which characterize the image of $i^*$.

The $1$-dimensional $T$-orbits in $X_{n,k,d}$ (or even in $X_{n,k}$) do not have the structure described in the above paragraph. This failure happens in two different ways.

\begin{example}
\label{ex:gkm-failure-exit}
    Consider the space $X_{4,2}$ of spanning quadruples $(\ell_1, \dots, \ell_4)$ of lines in $\CC^2$ with the action of $T = (\CC^*)^2$. We represent quadruples of lines in $\CC^2$ by $2 \times 4$ matrices with no zero columns, up to the column-scaling action of $(\CC^*)^4$. The torus $T$ acts by row-scaling. We have a $1$-dimensional $T$-orbit 
    $$\left\{ \begin{pmatrix}  0 & 0 & 0 & 1 \\ 1 & 1 & 1 & z \end{pmatrix} \,:\, z \in \CC^*   \right\}$$
    whose closure within $\overline{X_{4,2}} = (\PP^1)^4$ connects the $T$-fixed points
    $$\begin{pmatrix} 0 & 0 & 0 & 1 \\ 1 & 1 & 1 & 0 \end{pmatrix} \quad \text{and} \quad \begin{pmatrix} 0 & 0 & 0 & 0 \\ 1 & 1 & 1 & 1 \end{pmatrix}.$$
    The fixed point on the right does not lie in $X_{4,2}$.
\end{example}

The behavior in Example~\ref{ex:gkm-failure-exit} stems from the noncompactness of spanning line moduli spaces. The next example gives another kind of geometric pathology.

\begin{example}
    \label{ex:gkm-failure-size}
    Continuing to work in the space $X_{4,2}$, for any $a \in \CC^*$ we have a $1$-dimensional $T$-orbit
    $$\left\{ \begin{pmatrix}  z & az & 0 & 1 \\ 1 & 1 & 1 & 0 \end{pmatrix} \,:\, z \in \CC^*   \right\}.$$
    Each of these orbits is distinct and connects the fixed points 
    $$\begin{pmatrix} 0 & 0 & 0 & 1 \\ 1 & 1 & 1 & 0 \end{pmatrix} \quad \text{and} \quad \begin{pmatrix} 1 & 1 & 0 & 1 \\ 0 & 0 & 1 & 0 \end{pmatrix}.$$
    These fixed points are therefore connected by more than one 1-dimensional $T$-orbit.
\end{example}

Examples~\ref{ex:gkm-failure-exit} and \ref{ex:gkm-failure-size} notwithstanding, Proposition~\ref{prop:restriction-injective} gives an explicit description of the restriction map $H^*_T(X_{n,k,d};\QQ) \to H^*_T(X^T_{n,k,d};\QQ)$. We have the following combinatorial problem.

\begin{problem}
    \label{prob:gkm-image}
    Characterize the image of the map $i^*$ in Proposition~\ref{prop:restriction-injective}.
\end{problem}

As a partial result towards Problem~\ref{prob:gkm-image}, suppose  $(f_w) \in \im(i^*)$ where $w$ ranges over maps $w: [n] \to [k]$ whose image has size $d$ and $f_w \in \QQ[\ttt_k]$ for all $w$. Let $w_1, w_2: [n] \to [k]$ be two such maps and suppose there exists a subset $I \subseteq [n]$ and distinct entries $j_1,j_2 \in [k]$ such that
\begin{quote}
    $(\star)$ for all $i \in I$ we have $w_1(i) = j_1$ and $w_2(i) = j_2$, and for all $i' \notin I$ we have $w_1(i') = w_2(i')$.
\end{quote}
We claim that $(t_{i_1} - t_{i_2}) \mid (f_{w_1} - f_{w_2})$. Indeed, since $(f_w) \in \im(i^*)$, there exists $g \in \CC[\xxx_n,\yyy_d,\ttt_k]^{\symm_d}$ such that for each $w$, we have $g \mapsto f_w$ under the map $x_i \mapsto t_{w(i)}, e_d(\yyy_d) \mapsto e_d(\ttt_k)$. The condition $(\star)$ implies that $f_{w_1} - f_{w_2}$ vanishes at $t_1 = t_2$, and the divisibility $(t_{i_1} - t_{i_2}) \mid (f_{w_1} - f_{w_2})$ follows.

The divisibility in the above paragraph has a geometric interpretation. For any $w \in [k]^n$, let $\eee_w$ be the $k \times n$ matrix with 1's in positions $(w(i),i)$ for $i= 1, \dots, n$ and 0's elsewhere. These matrices represent the $T$-fixed points $[\eee_w]$ in $(\PP^{k-1})^n$. If $w_1,w_2 \in [k]^d$ satisfy $(\star)$, let $\eee_{w_1,w_2}$ be the $k \times n$ matrix such that
\begin{equation}
    \text{column $i$ of $\eee_{w_1,w_2}$} = \begin{cases}
        \text{column $i$ of $\eee_{w_2}$} & \text{if $i \in I$,} \\
        0 & \text{otherwise.}
    \end{cases}
\end{equation}
Then $\{ [\eee_{w_1} + a \eee_{w_1,w_2}] \,:\, a \in \CC^* \}$ forms a $1$-dimensional $T$-orbit. The closure of this orbit
$$P_{w_1,w_2} := \overline{\{ [\eee_{w_1} + a \eee_{w_1,w_2}] \,:\, a \in \CC^* \}} = \{ [\eee_{w_1} + a \eee_{w_1,w_2}] \,:\, a \in \CC^* \} \cup \{ [\eee_{w_1}], [\eee_{w_2}] \}$$
is a copy of $\PP^1$, and for $g \in H^*_T(P_{w_1,w_2};\QQ)$ we have $(t_{j_1} - t_{j_2}) \mid (g(w_1) - g(w_2))$. (Here $g(w) \in H^*_T([\eee_w];\QQ) = \QQ[\ttt_k]$ is the pullback of $g$ along $\{[\eee_w]\} \hookrightarrow P_{w_1,w_2}.)$ By considering the inclusions $\{ [\eee_{w_i}] \} \hookrightarrow P_{w_1,w_2} \hookrightarrow X_{n,k,d}$ for $i=1,2$, the divisibility $(t_{i_1} - t_{i_2}) \mid (f_{w_1} - f_{w_2})$ of the last paragraph follows.

\section{Conclusion}
\label{sec:Conclusion}

In this paper we used orbit harmonics to calculate the torus equivariant cohomology of a moduli space $X_{n,k,d}$ of spanning line configurations which generalizes the spanning line configuration space $X_{n,k}$ introduced by Pawlowski and Rhoades \cite{PR}. If $X$ is a space with a torus action whose cohomology ring is an orbit harmonics quotient, one can ask for an analogous computation for its equivariant cohomology by turning the orbit harmonics parameters into torus variables. Several possible spaces $X$ on which to apply this program are as follows.

\begin{itemize}
    \item Let $V \subseteq \Gr(d,\CC^k)$ be a smooth Schubert variety. Let $X \subseteq X_{n,k,d}$ be the subvariety of tuples $(\ell_1, \dots, \ell_n)$ of lines in $\CC^k$ for which $\ell_1 + \cdots + \ell_n \in V$.
    \item Let $k$ be a positive integer and let $\alpha = (\alpha_1, \dots, \alpha_d)$ be integers with $1 \leq \alpha_i \leq k$ for all $i$. Rhoades used orbit harmonics to present \cite{RhoadesSpanning} the ordinary cohomology of the moduli space $X_{\alpha,k}$ of sequences $W_\bullet = (W_1, \dots, W_d)$ of linear subspaces $W_i \subseteq \CC^k$ with $\dim W_i = \alpha_i$ such that $W_1 + \cdots + W_d = \CC^k$. The rank $k$ torus acts on this space.
    \item Let $n \geq k \geq r$ be positive integers. Rhoades and Wilson used orbit harmonics to present \cite{RW} the cohomology of the space of $n$-tuples $\ell_\bullet = (\ell_1, \dots, \ell_n)$ of lines in $\CC^k$ such that $\ell_1 + \cdots + \ell_n = \CC^k$ and the sum $\ell_1 \oplus \cdots \oplus \ell_r$ is direct. The rank $k$ torus also acts on this space.
    \item Griffin, Levinson, and Woo \cite{GLW} defined and presented the cohomology of a family of varieties $Y_{n,\lambda,s}$ which generalize the cohomology rings of Springer fibers studied by Garsia and Procesi \cite{GP}. The presentation of $H^*(Y_{n,\lambda,s};\ZZ)$ in \cite{GLW}  arises from an orbit harmonics quotient $R_{n,\lambda,s}$ originally defined by Griffin \cite{Griffin}.
\end{itemize}
The Garsia-Procesi quotient rings $R_\lambda$ presenting the cohomology of Springer fibers $\mathcal{B}_\lambda$  are a special case of the final bullet point \cite{GP}. The rings $R_\lambda$ have an orbit harmonics interpretation; Kumar and Procesi presented \cite{KP} the torus-equivariant cohomology of $\mathcal{B}_\lambda$ as regular functions on the family of these orbit harmonics loci. Freitas and Mukhin \cite{FM} defined explicit polynomial ring quotients which could be relevant in presenting the Kumar-Procesi rings.

In the context of $X_{n,k,d}$, Lemma~\ref{lem:affine-paving} gives a `geometric' $\ZZ[\ttt_k]$-basis $[ \overline{C}_w ]_T \in H^*_T(X_{n,k,d};\ZZ)$ indexed by words $w \in \WWW_{n,k,d}$. One can ask for an algebraic interpretation of this basis.

\begin{problem}
    \label{prob:representatives}
    For $w \in \WWW_{n,k,d}$, find a polynomial representative for $[ \overline{C}_w ]_T$ under the presentation 
    $$H^*_T(X_{n,k,d};\ZZ) = \ZZ[\xxx_n,\yyy_d,\ttt_k]^{\symm_d}/I_{n,k,d}$$
    of Theorem~\ref{thm:borel-presentation}.
\end{problem}

When $k = d$, a solution to Problem~\ref{prob:representatives} is contained in the work of Pawlowski-Rhoades \cite{PR}. A word $w: [n] \to [k]$ is {\em convex} if it contains no subword of the form $i \dots j \dots i$ for $i \neq j$. The {\em convexification} $\mathrm{conv}(w)$ is the unique convex word with the same letter multiplicities as $w$ in which the initial letter values appear in the same order. We let $\sigma(w) \in \symm_n$ be the unique permutation with the fewest number of inversions which sorts $\conv(w)$ into $w$ If $w \in \WWW_{n,k}$ is a convex Fubini word, the {\em standardization} $\std(w)$ is the permutation in $\symm_n$ whose one-line notation is obtained from $w$ by replacing non-initial letters with $k+1, k+2, \dots, n$ from left to right.
For example, if $w = [2,4,1,1,2,4,1,3,4]$ we have %
\begin{multline*}\mathrm{conv}(w) = [2,2,4,4,4,1,1,1,3], \quad 
\sigma(w) = [1,5,2,6,9,3,4,7,8], \\ \text{ and } \quad \std(\mathrm{conv}(w)) = [2,5,4,6,7,1,8,9,3].\end{multline*}

 Let $w \in \WWW_{n,k}$ be a Fubini word. We have the corresponding $T$-stable cell $C_w \subseteq X_{n,k}$. We claim that the class $[\overline{C_w}]_T \in H^*_T(X_{n,k};\ZZ)$ is represented by 
\begin{equation}
\label{eqn:representative}
[\overline{C_w}]_T = \sigma(w)^{-1} \cdot \mathfrak{S}_{\std(\conv(w))}(-\mathbf{x}_n|\mathbf{t}_k)
\end{equation}
under the identification of Theorem~\ref{thm:borel-presentation}.
The notation in \eqref{eqn:representative} is as follows.
\begin{itemize}
    \item $\mathfrak{S}_{\std(\conv(w))}(-\mathbf{x}_n|\mathbf{t}_k)$ is the double Schubert polynomial indexed by the permutation $\std(\conv(w)) \in \symm_n$ evaluated at the variables $-x_1, - x_2, \dots, - x_n$ and $ t_1,  t_2, \dots, t_k$, and
    \item the permutation $\sigma(w)^{-1}$ acts on $\mathfrak{S}_{\std(\conv(w))}(-\mathbf{x}_n|\mathbf{t}_k)$ by permuting the $x$-variables.
\end{itemize}
To justify \eqref{eqn:representative}, one first observes that the natural action of $\symm_n$ on $X_{n,k}$ corresponds to permutation of the $x$-variables under the presentation of Theorem~\ref{thm:borel-presentation}. On the other hand, as in \cite[Proof of Prop. 5.11]{PR}, we have the equality
\begin{equation}
    C_w = C_{\sigma(w) \cdot \conv(w)} = C_{\conv(w)} \cdot \sigma^{-1}(w)
\end{equation}
of subspaces of $X_{n,k}$. This reduces us to the case where $w$ is convex, so $w = \conv(w)$. When $w$ is convex, it is shown in \cite[Proof of Lem. 5.9]{PR} that $\Omega_w := \overline{C_w}$ is a certain degeneracy locus (see e.g. \cite{PR} or \cite{AF} for more details). Since $\Omega_w$ is $T$-stable, \cite[Cpt. 11, Cor. 6.5]{AF} applies to give \eqref{eqn:representative}. 
The authors do not know a solution to Problem~\ref{prob:representatives} for $d < k$.

\section{Acknowledgements}

The authors are grateful to Sean Griffin, Ed Richmond, and Travis Scrimshaw for helpful conversations. B. Rhoades was partially supported by NSF Grant DMS-2246846. T. Matsumura was supported partially by JSPS Grant-in-Aid for Scientific Research (C) 20K03571 and (B) 23K25772.


\begin{thebibliography}{99}
 

\bibitem{AHMMS} T. Abe, T. Horiguchi, M. Masuda, S. Murai, T. Sato.
Hessenberg varieties and hyperplane arrangements.
{\em J. Reine Angew. Math.}, {\bf 2020 (764)} (2020), 241--286. 

\bibitem{AF} D. Anderson and W. Fulton. 
{\em Equivariant Cohomology in Algebraic Geometry.}
Cambridge University Press, 2023.


\bibitem{BR} S. Billey and S. Ryan.
Brewing Fubini-Bruhat Orders. Proceedings of the 36$^{th}$ Conference on
Formal Power Series and Algebraic Combinatorics.
{\em S\'em. Loth. Comb.}, {\bf 91B} (2024), Article \#45.

\bibitem{Eisenbud} D. Eisenbud.
{\em Commutative Algebra: with a view towards algebraic geometry.}
Vol. 150. Springer Science \& Business Media. 2013.


\bibitem{FM} M. Freitas and E. Mukhin.
The deformed Tanisaki-Garsia-Procesi modules.
Preprint, 2024. {\tt arXiv:2402.17865}.


\bibitem{Fulton} W. Fulton.
{\em Young tableaux: with applications to representation theory and geometry.} No. 35. Cambridge University Press, 1997.




\bibitem{GP} A. M. Garsia and C. Procesi.
On certain graded $S_n$-modules and the $q$-Kostka polynomials.
{\em Adv. Math.}, {\bf 94 (1)} (1992), 82--138.

\bibitem{GKM} M. Goresky, M. Kottwitz, and R. MacPherson.
Equivariant cohomology, Koszul duality, and the localization theorem.
{\em Invent. Math.}, {\bf 131 (1)} (1998), 25--84.


\bibitem{Griffin} S. Griffin.
Ordered set partitions, Garsia-Procesi modules, and rank varieties.
{\em Trans. Amer. Math. Soc.}, {\bf 374} (2021), 2609--2660.


\bibitem{GLW} S. Griffin, J. Levinson, and A. Woo.
Springer fibers and the Delta Conjecture at $t = 0$.
{\em Adv. Math.}, {\bf 439} (2024), 109491.



\bibitem{GZ} V. Guillemin and C. Zara.
One-skeleta, Betti numbers, and equivariant cohomology.
{\em Duke Math. J.}, {\bf 107 (2)} (2001), 283--349.


\bibitem{HRS} J. Haglund, B. Rhoades, and M. Shimozono. 
Ordered set partitions, generalized coinvariant algebras, and the Delta Conjecture.
{\em Adv. Math.}, {\bf 329} (2018), 851--915.


\bibitem{Hatcher} A. Hatcher. {\em Algebraic Topology}. Cambridge University Press, 2022.


\bibitem{KM} A. Knutson and E. Miller.
Gr\"obner geometry of Schubert polynomials.
{\em Ann. Math.}, {\bf 161} (2005), 1245--1318.


\bibitem{KP} S. Kumar and C. Procesi.
An algebro-geometric realization of equivariant cohomology
of some Springer fibers.
{\em J. Alg.}, {\bf 368} (2012), 70--74.
 

\bibitem{PR} B. Pawlowski and B. Rhoades.
A flag variety for the Delta Conjecture.
{\em Trans. Amer. Math. Soc.}, {\bf 372 (11)} (2019), 8195--8248.


\bibitem{RhoadesSpanning} B. Rhoades.
Spanning subspace configurations.
{\em Sel. Math. -- New Ser.}, {\bf 27} (2021), 1--36.


\bibitem{RhoadesViennot} B. Rhoades.
Increasing subsequences, matrix loci, and Viennot shadows.
To appear, {\em Forum Math. Sigma}, 2024. {\tt arXiv:2306.08718}.

\bibitem{RW} B. Rhoades and A. T. Wilson.
Line configurations and r-Stirling partitions. {\em J. Comb.} {\bf 10 (3)} (2019), 411-431.

\bibitem{SW} J. Shareshian and M. Wachs.
Chromatic quasisymmetric functions.
{\em Adv. Math.}, {\bf 295 (4)} (2016), 497--551.

\bibitem{Tymoczko} J. Tymoczko.
Permutation actions on equivariant cohomology, Toric
topology. 365--384.
{\em Contemp. Math.}
 {\bf 460}, Amer. Math. Soc., Providence, RI, 2008.












\end{thebibliography}
\end{document}